 \newtheorem{thm}{Theorem}[section]
 \newtheorem{cor}[thm]{Corollary}
 \newtheorem{lem}[thm]{Lemma}
 \newtheorem{prop}[thm]{Proposition}
 \theoremstyle{definition}
 \newtheorem{defn}[thm]{Definition}
 \theoremstyle{remark}
 \newtheorem{rem}[thm]{Remark}
 \newtheorem{ex}[thm]{Example}
 \theoremstyle{remark}
 \numberwithin{equation}{section}
\newcommand{\normmm}[1]{{\left\vert\kern-0.25ex\left\vert\kern-0.25ex\left\vert #1
		\right\vert\kern-0.25ex\right\vert\kern-0.25ex\right\vert}}
\newcommand{\normm}[1]{{\vert\kern-0.25ex\vert\kern-0.25ex\vert #1
		\vert\kern-0.25ex\vert\kern-0.25ex\vert}}
\begin{document}

%-------------------------------------------------------------------------
% editorial commands: to be inserted by the editorial office
%
%\firstpage{1} \volume{228} \Copyrightyear{2004} \DOI{003-0001}
%
%
%\seriesextra{Just an add-on}
%\seriesextraline{This is the Concrete Title of this Book\br H.E. R and S.T.C. W, Eds.}
%
% for journals:
%
%\firstpage{1}
%\issuenumber{1}
%\Volumeandyear{1 (2004)}
%\Copyrightyear{2004}
%\DOI{003-xxxx-y}
%\Signet
%\commby{inhouse}
%\submitted{March 14, 2003}
%\received{March 16, 2000}
%\revised{June 1, 2000}
%\accepted{July 22, 2000}
%
%
%
%---------------------------------------------------------------------------
%Insert here the title, affiliations and abstract:
%

\title[Relations among the notions of various kinds of stability and applications]
 {The relations among the notions of various kinds of stability and their applications}

%----------Author 1
\author[Tiexin Guo]{Tiexin Guo*}

\address{%
School of Mathematics and Statistics\\
Central South University\\
ChangSha 410083\\
China}

\email{tiexinguo@csu.edu.cn}

\thanks{
	*Corresponding author.\\
	This research is supported by the National Natural Science Foundation of China (Grant Nos.
	12371141,11971483) and the Provincial Natural Science Foundation of Hunan (Grant No. 2023JJ30642).}
	
%----------Author 2
\author{Xiaohuan Mu}
\address{%
	School of Mathematics and Statistics\\
	Central South University\\
	ChangSha 410083\\
	China}
	
\email{xiaohuanmu@163.com}

%----------Author 3
\author{Qiang Tu}
\address{%
	School of Mathematics and Statistics\\
	Central South University\\
	ChangSha 410083\\
	China}

\email{qiangtu126@126.com}

%----------classification, keywords, date
\subjclass{Primary 18F15, 46A16, 46H25, 53C23.}

\keywords{$\sigma$-stability $\cdot$ $d$-$\sigma$-stability $\cdot$ Gluing property $\cdot$ $d$-decomposability $\cdot$ Universal completeness $\cdot$ $B$-stability}

\date{\today}
%----------additions附言
%\dedicatory{To my boss}
%%% ----------------------------------------------------------------------

\begin{abstract}
First, we prove that a random metric space can be isometrically embedded into a complete random normed module, as an application it is easy to see that the notion of $d$-$\sigma$-stability in a random metric space can be regarded as a special case of the notion of $\sigma$-stability in a random normed module; as another application we give the final version of the characterization for a $d$-$\sigma$-stable random metric space to be stably compact. Second, we prove that an $L^{p}$-normed $L^{\infty}$-module is exactly generated by a complete random normed module so that the gluing property of an $L^{p}$-normed $L^{\infty}$-module can be derived from the $\sigma$-stability of the generating random normed module, as applications almost all the basic theory of module duals can be obtained from the theory of random conjugate spaces. Third, we prove that a random normed space is order complete iff it is $(\varepsilon,\lambda)$-complete, as an application it is proved that the $d$-decomposability of an order complete random normed space is exactly its $d$-$\sigma$-stability. Finally, we prove that an equivalence relation on the product space of a nonempty set $X$ and a complete Boolean algebra $B$ is regular iff it can be induced by a $B$-valued Boolean metric on $X$, as an application it is proved that a nonempty subset of a Boolean set $(X,d)$ is universally complete iff it is a $B$-stable set defined by a regular equivalence relation.

\end{abstract}

%%% ----------------------------------------------------------------------
\maketitle
%%% ----------------------------------------------------------------------
%\tableofcontents

%###############################################################################################
%###############################################################################################
\section{Introduction}
%###############################################################################################
%###############################################################################################

The notions of various kinds of stability were introduced and have played crucial roles in the study of scattered topics in analysis and geometry. The purpose of this paper is to unify these notions by analysing the relations among them and give some applications. For the clarity and convenience of readers, we will divide this introduction into the following four subsections to recapitulate the backgrounds for these notions and their roles in various kinds of topics in analysis and geometry, while the main results of this paper are briefly summarized but the concrete notions of stability will be given when they are needed in the subsequent text of this paper.

%##################################################################################################
\subsection{On $\sigma$-stability and $d$-$\sigma$-stability in random functional analysis}
%##################################################################################################

 Random functional analysis is based on the idea of randomizing the traditional space theory of functional analysis. Such an idea dates back to the theory of probabilistic metric spaces initiated by Menger, Schweizer and Sklar and the others \cite{SS}. Strictly speaking, the work on random functional analysis began with the study of random metric spaces and random normed spaces whose original definitions were introduced in \cite[Chapters 9 and 15]{SS}. The development of random normed spaces had been in a stagnant state for a long time mainly because random normed spaces, which are often endowed with the $(\varepsilon,\lambda)$-topology, are not locally convex and hence the theory of conventional conjugate spaces fails to work for them. Our breakthrough first came in \cite{Guo6a} where Guo first gave an equivalent formulation of the notion of a random normed space (see also Definition \ref{def.2.1} and Definition \ref{def.2.4} in this paper), introduced an almost everywhere (briefly, a.e.) bounded random linear functional and established the corresponding Hahn-Banach theorem (see Example \ref{exm.2.12}), which leads to the development of random conjugate spaces. Subsequently, the notions of random normed and inner product modules were further introduced in \cite {Guo6aa,Guo1} so that the theory of random conjugate spaces obtained a fast development, for example, the representation theory of random conjugate spaces was given in \cite{Guo7a,GY,Guo5}, the characterizations of random reflexivity under the framework of random conjugate spaces were deeply studied in \cite{GL,Guo4}, the Helly theorem in random normed modules was also established by characterizing the dimensional structure of finitely generated $L^{0}$-modules in \cite{GS} and the geometric theory of random normed modules was deeply studied in \cite{GZ}. In 1999, the notion of a random locally convex module was introduced by
 Guo and the theory of random conjugate spaces for random locally convex modules was also widely developed, see \cite{Guo1,Guo2,Guo3,Guo6,GZWG,GZWYYZ} and the reference literature therein. To sum up, random functional analysis is functional analysis based on random metric spaces, random normed modules and random locally convex modules, which were developed under the $(\varepsilon,\lambda)$-topology before 2009. The basic concepts and results of random functional analysis well developed under the $(\varepsilon,\lambda)$-topology have played an essential role in the development of nonsmooth differential geometry on metric measure spaces, see \cite{Gigl,LP1,LP2} and also Section \ref{sec.1.2} for details.

In 2009, locally $L^0$-convex modules were introduced in \cite{FKV} in order to establish a generalized  convex analysis suitable for the study of conditional convex risk measures. With the notion of a locally $L^0$-convex module, the locally $L^0$-convex topology for a random locally convex module was also presented in \cite{FKV}, it is later showed independently in \cite{WG} and in \cite{Zap}that the truly useful part of the theory of a locally $L^0$-convex module amounts to the theory of a random locally convex module endowed with the locally $L^0$-convex topology. The $(\varepsilon,\lambda)$-topology is an abstract generalization of the usual topology of convergence in probability measure and has been widely used in functional analysis, probability theory and mathematical finance. The locally $L^0$-convex topology is stronger than the $(\varepsilon,\lambda)$-topology so that the locally $L^0$-convex topology can ensure most of the $L^0$-convex sets in question to have nonempty interiors, which makes it possible to establish the continuity and subdifferentiability theorems for $L^0$-convex functions, see \cite{FKV,GZWYYZ} for details. The locally $L^0$-convex topology is, however, too strong to ensure it to be a linear topology so that it is difficult to establish the connection between the theory of locally $L^0$-convex modules and the corresponding theory of conventional locally convex spaces. Thus, the $(\varepsilon,\lambda)$-topology and locally $L^0$-convex topology have their respective advantages and disadvantages, it is desirable to combine their advantages so that a perfect random convex analysis truly suitable for mathematical finance can be established, see e.g. \cite{GZWYYZ,GZZ1,GZZ2}, and a key step towards the goal is naturally to establish the connection between some basic results derived from the two kinds of topologies, which was first done in \cite{Guo3} where the notion of $\sigma$-stable sets (namely, $\sigma$-stability) in an $L^0$-module was presented and has played a crucial role in establishing the above-stated connection.  In fact, the paper \cite{Guo3} also advocates a subsequent developing model of random functional analysis by simultaneously considering the above-stated two kinds of topologies and in the process the assumption on $\sigma$-stability for random locally convex modules or their subsets in question are ubiquitous.

With the deep development of the theory of random conjugate spaces, Guo \cite{Guo4} earlier found that some basic theorems involving $w^*$-compactness and weak compactness for normed spaces are no longer valid for general complete random normed modules under random $w^{*}$-topology and random weak topology. On the other hand, to provide a simplifying proof of no-arbitrage criteria, Kabanov and Stricker \cite{KS} proved the randomized Bolzano-Weierstrass theorem, which states that every almost surely bounded sequence of random variables with values in the Euclidean space admits an almost surely convergent randomized subsequence although it does not admit any almost surely convergent subsequence, which also motivates the subsequent development of the theory of random sequential compactness \cite{GWCXY,JKZ}. Motivated by the work \cite{Guo3,Guo4,KS,FKV}, Drapeau, et.al \cite{DJKK} presented the notions of conditional sets and conditional topology with an attempt to provide the theory of conditional compactness suitable for the further development of random functional analysis or more general conditional analysis. Conditional sets and their operations (conditional join, meet and complement, et.al) are, however, complicated and in fact can also be expressed in terms of conventional set theory, which yields the notions of B-stable sets and stably compactness \cite{JZ,GWT}. It should be pointed out that the notion of B-stable sets (or equivalently, conditional sets) can be regarded as an important and abstract generalization of that of $\sigma$-stable sets. Likewise, every mathematical construct in conditional set theory depends on the notion of $B$-stability.

As mentioned above, the notion of $\sigma$-stability was introduced in \cite{Guo3} by means of the $L^0$-module multiplication and thus is purely algebraic, which has made the theory of random normed modules and random locally convex modules obtain a significant and profound advance, see e.g. \cite{Guo3,GZWG,GZWYYZ,GWCXY,GWT} for details. Contrasted with this, the theory of random metric spaces as a random generalization of ordinary metric spaces had not made much substantive progress for a quite long time although random metric spaces were earliest presented in random functional analysis. With the notion of $d$-$\sigma$-stability for subsets of a random metric space presented in \cite{JKZ}, such a situation was beginning to change, a series of deep developments on random metric spaces were achieved \cite{GWYZ}. Although similar to $\sigma$-stability, $d$-$\sigma$-stability depends on random
metric structure rather than an $L^0$-module structure. In this paper, we will prove that every random metric space can be isometrically embedded into a $\mathcal{T}_{\varepsilon,\lambda}$-complete random normed module so that the notion of $d$-$\sigma$-stability can be regarded as a special case of that of $\sigma$-stability. As applications of this isometric embedding, we can transfer many important results well established for random normed modules onto random metric spaces, and in particular we give the final version of the characterization for a random metric space to be stably compact, see Theorem \ref{thm.2.21} for details.

%##################################################################################################
\subsection{On the gluing property and $\sigma$-stability}\label{sec.1.2}
%##################################################################################################

The notion of $L^p$-normed $L^{\infty}$-modules was introduced by Gigli in \cite{Gigl} as one of the three pillars of nonsmooth differential geometry on metric measure spaces, where the main purpose is to provide a robust functional-analytic framework suitable for constructing effective notions of $1$-forms and vector fields in the setting of metric measure spaces, see \cite{Gigl1,Gigl,LP1,LP2,GLP} for details. In fact, the connection between $L^p$-normed $L^{\infty}$-modules and $\mathcal{T}_{\varepsilon,\lambda}$-complete random normed modules has been thoroughly studied in \cite{Gigl} in the way that given an $L^p$-normed $L^{\infty}$-module in advance, a $\mathcal{T}_{\varepsilon,\lambda}$-complete random normed module is then obtained as a completion of the $L^p$-normed $L^{\infty}$-module with respect to the topology of convergence locally in measure, it is in such a context that an equivalent notion of a random normed module was independently introduced in Gigli's spectacular work \cite{Gigl1,Gigl} in the name of an $L^{0}$-normed $L^{0}$-module. In this paper, we adopt a reverse way as in \cite{GL,Guo5} that given a $\mathcal{T}_{\varepsilon,\lambda}$-complete random normed module in advance, an $L^p$-normed $L^{\infty}$-module is then obtained as the abstract $L^p$-space generated by the random normed module, in fact, every $L^p$-normed $L^{\infty}$-module can be also obtained in such a way, see Theorem \ref{thm.3.6} for details.
Now that every $L^p$-normed $L^{\infty}$-module is exactly generated by a $\mathcal{T}_{\varepsilon,\lambda}$-complete random normed module, one can easily see that the gluing property of an $L^p$-normed $L^{\infty}$-module can be derived from the $\sigma$-stability of the generating random normed module, this paper will further show that all the results of module duals for $L^p$-normed $L^{\infty}$-modules, which were obtained in \cite{Gigl}, can be derived from the well-established theory of random conjugate spaces for random normed modules in \cite{Guo5,GL}, and in particular by making use of the Hahn-Banach theorem on random normed modules we can still establish the Hahn-Banach theorem for the elements of module duals for $L^p$-normed $L^{\infty}$-modules although there is not yet the corresponding result for module duals of general $L^{\infty}$-modules. Besides, Lu\v{c}i\'{c} and Pasqualetto \cite{LP2} recently have thoroughly studied measurable Banach bundles. Given a measurable Banach bundles $E$, let $\Gamma_{0}(E)$ be the set of equivalence classes of strongly measurable sections of $E$, then $\Gamma_{0}(E)$ is a $\mathcal{T}_{\varepsilon,\lambda}$-complete random normed module, further let $\Gamma_{p}(E)$ be the $L^p$-normed $L^{\infty}$-module generated by $\Gamma_{0}(E)$ for any $p\in [1,+\infty]$, Lu\v{c}i\'{c} and Pasqualetto gave the two elegant representation theorems of the module dual of $\Gamma_{p}(E)$, see Theorem 3.8 and Proposition 3.10 of \cite{LP2}. In this paper, by explicitly representing the random conjugate space of $\Gamma_{0}(E)$ we give a simpler proof of Theorem 3.8 and Proposition 3.10 of \cite{LP2}, in fact, their representation theorems both can be regarded as a special case of Corollary \ref{coro.3.10} of this paper.

%##################################################################################################
\subsection{On $d$-decomposability and $d$-$\sigma$-stability in order complete random normed spaces }
%##################################################################################################

The notion of a lattice-normed space was introduced by Kantorovich \cite{Kant,KVP} in 1939 (prior to the development of random functional analysis) in connection with the development of the general theory of approximation methods, where the notion of $d$-decomposable lattice norm was presented. $d$-decomposable order complete lattice-normed spaces (namely, Banach- Kantorovich spaces) were deeply studied by Kusraev \cite{Kurs}. It is obvious that random normed spaces are a special class of lattice-normed spaces. In this paper we will prove that order completeness and $\mathcal{T}_{\varepsilon,\lambda}$-completeness are equivalent for random normed spaces so that a $d$-decomposable order complete random normed space is automatically a $\mathcal{T}_{\varepsilon,\lambda}$-complete random normed module, which also implies that $d$-decomposability automatically becomes $d$-$\sigma$-stability for an order complete random normed space.

%##################################################################################################
\subsection{On universal completeness and $B$-stability}
%##################################################################################################

Last but not the least, we study the relation between the universal completeness defined by a Boolean valued metric and the $B$-stability defined by a regular equivalence relation. The notion of a Boolean valued metric (or, a $B$-set) was introduced in 1950s, see \cite{KK} and the references therein, the universal completeness defined by a Boolean metric has played an ubiquitous role in carrying out the mathematical constructs in Boolean valued analysis. The $B$-stability of a nonempty set $X$ with respect to a Boolean algebra $B$ is defined by a regular equivalence relation on $X\times B$. In this paper, it is proved that an equivalence relation on $X\times B$ is regular iff it can be induced by a $B$-valued Boolean metric on $X$, and thus the notions of universal completeness and $B$-stability are equivalent to each other, which also means that all the other notions of stability involved in this paper can be unified to a single one -- universal completeness or $B$-stability.

The remainder of this paper is organized as follows. Section 2 is devoted to proving that $d$-$\sigma$-stability can be regarded a special case of
$\sigma$-stability by isometrically embedding a random metric space into a $\mathcal{T}_{\varepsilon,\lambda}$-complete random normed module, where the final version of the  characterization for a $d$-$\sigma$-stable random metric space to be stably compact is given as an application. Section 3 is devoted to proving that the gluing property can be derived from $\sigma$-stability by checking that every $L^p$-normed $L^\infty$-module can be exactly generated by a $\mathcal{T}_{\varepsilon,\lambda}$-complete random normed module, where the Hahn-Banach theorem and its geometric form in the sense of module duals of $L^{p}$-normed $L^{\infty}$-modules are given as applications of the theory of random conjugate spaces of $RN$ spaces. Besides, some other applications of the theory of random conjugate spaces to module duals are also given in this section. Section 4 is devoted to proving that $d$-decomposability automatically becomes $d$-$\sigma$-stability in order complete random normed spaces by checking that order completeness is equivalent to $\mathcal{T}_{\varepsilon,\lambda}$-completeness. Finally, Section 5 is devoted to proving that universal completeness is equivalent to $B$-stability by checking that a Boolean valued metric amounts to a regular equivalence relation.

Throughout this paper, $\mathbb{N}$ always stands for the set of positive integers, $\mathbb{K}$ for the scalar field $\mathbb{R}$ of real numbers or $\mathbb{C}$ of complex numbers, $(\Omega,\mathcal{F},\mu)$ a nontrivial $\sigma$-finite measure space (where ``nontrivial'' means $\mu(\Omega)>0$), and $L^{0}(\mathcal{F},\mathbb{K})$ the usual algebra over $\mathbb{K}$ of equivalence classes of measurable functions from $(\Omega,\mathcal{F},\mu)$ to
$\mathbb{K}$.

Specially, we simply denote $L^{0}(\mathcal{F},\mathbb{R})$ by $L^{0}(\mathcal{F})$. Besides, $\bar{L}^{0}(\mathcal{F})$ always denotes the set of equivalence classes of extended real-valued measurable functions defined on $(\Omega,\mathcal{F},\mu)$. Proposition \ref{pro.1.1} is known from \cite{DS}, which is frequently used in this paper.

\begin{prop}\label{pro.1.1}
	$\bar{L}^0(\mathcal{F})$ is a complete lattice under the partial order $\leq$: $\xi \leq \eta$ iff $\xi^0(\omega) \leq \eta^0(\omega)$ for almost all $\omega$ in $\Omega$ (briefly, a.e.), where $\xi^0 $ and $\eta^0$ are arbitrarily chosen representatives, respectively. We always denote by $\bigvee A$ and $\bigwedge A$ the supremum and infimum of a nonempty subset $A$ in $\bar{L}^0(\mathcal{F})$, respectively. Furthermore, $(\bar{L}^0(\mathcal{F}), \leq)$ possesses the  following nice properties:
	\begin{itemize}
		\item [(1)] There are two sequences $\{a_n, n \in \mathbb{N} \}$ and $\{b_n, n \in \mathbb{N} \}$ in $A$ such that $\bigvee_{n \geq 1} a_n = \bigvee A$ and $\bigwedge_{n \geq 1}b_n = \bigwedge A$.
		\item [(2)] If $A$ is directed upwards (or downwards), then $\{a_n, n \in \mathbb{N} \}$ (correspondingly, $\{ b_n, n \in \mathbb{N} \}$) can be chosen as nondecreasing (nonincreasing).
		\item [(3)]	$(L^0(\mathcal{F}), \leq)$, as a sublattice of $\bar{L}^0(\mathcal{F})$, is naturally Dedekind complete.	
	\end{itemize}
\end{prop}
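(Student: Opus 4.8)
The plan is to establish the existence of suprema (infima being dual) via the classical essential-supremum construction, from which the three listed properties follow with little extra work. First I would reduce to a finite measure: since $(\Omega,\mathcal{F},\mu)$ is $\sigma$-finite, there is a strictly positive $w \in L^0(\mathcal{F})$ with $\int_\Omega w\, d\mu < \infty$, so that $d\nu = w\, d\mu$ defines a finite measure equivalent to $\mu$. Because the partial order $\leq$ on $\bar{L}^0(\mathcal{F})$ depends only on the $\mu$-null sets, and $\mu$ and $\nu$ share the same null sets, I may work with $\nu$ throughout. Next I would transport the problem to a bounded range by fixing a strictly increasing bijection $\phi\colon [-\infty,+\infty]\to[0,1]$ (for instance $\phi(x)=\frac12+\frac1\pi\arctan x$, extended by $\phi(\pm\infty)=\frac12\pm\frac12$); then $\xi\mapsto \phi\circ\xi$ is an order isomorphism of $\bar{L}^0(\mathcal{F})$ onto the equivalence classes of $[0,1]$-valued measurable functions, so it suffices to construct suprema there.

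For a nonempty $A \subseteq \bar{L}^0(\mathcal{F})$, writing $\tilde A=\{\phi\circ\xi : \xi\in A\}$, I would consider the set $\mathcal{C}$ of all countable subfamilies $C\subseteq \tilde A$ and for each such $C$ form the (genuinely well-defined, since $C$ is countable) pointwise supremum $g_C=\sup_{f\in C}f$, a $[0,1]$-valued measurable function. Let $\alpha=\sup_{C\in\mathcal{C}}\int_\Omega g_C\, d\nu$, which is finite because $\nu$ is finite and $g_C\le 1$. Choosing $C_n\in\mathcal{C}$ with $\int_\Omega g_{C_n}\, d\nu\to\alpha$ and putting $C_\infty=\bigcup_n C_n$ (still countable), monotonicity gives $\int_\Omega g_{C_\infty}\, d\nu=\alpha$. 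I would then verify that $g:=g_{C_\infty}$ is the essential supremum of $\tilde A$: for any $f\in\tilde A$ the family $C_\infty\cup\{f\}$ is countable, so $\int_\Omega g_{C_\infty\cup\{f\}}\, d\nu\le\alpha=\int_\Omega g_{C_\infty}\, d\nu$ together with $g_{C_\infty\cup\{f\}}\ge g_{C_\infty}$ forces $f\le g$ a.e.; hence $g$ is an upper bound, and any other upper bound dominates every member of $C_\infty$ and therefore dominates $g=\sup C_\infty$. Pulling back through $\phi^{-1}$ yields $\bigvee A$, and the countable family $C_\infty$ provides exactly the sequence $\{a_n\}$ demanded in (1); the dual argument gives $\bigwedge A$ and the sequence $\{b_n\}$.

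With the essential supremum in hand, properties (2) and (3) are routine. For (2), given the countable witnessing family $\{a_n\}$ from (1) and assuming $A$ directed upwards, I would recursively use directedness to select $c_n\in A$ with $c_n\ge c_{n-1}\vee a_n$; the resulting nondecreasing sequence $\{c_n\}\subseteq A$ satisfies $\bigvee_n c_n\ge\bigvee_n a_n=\bigvee A$ and is trivially $\le\bigvee A$, hence equals it (the downward case being dual). For (3), if $A\subseteq L^0(\mathcal{F})$ is nonempty and order bounded above by some $h\in L^0(\mathcal{F})$, then fixing any $a_0\in A$ gives $a_0\le\bigvee A\le h$, so $\bigvee A$ is a.e. finite and lies in $L^0(\mathcal{F})$, which proves Dedekind completeness.

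The only genuine obstacle is the essential-supremum construction itself, specifically the integral-maximization step, and it is here that $\sigma$-finiteness is indispensable: it is what permits the reduction to a finite measure and makes $\alpha$ finite, so that the countable union $C_\infty$ can absorb each individual element of $A$ without increasing the integral. Everything else is order-theoretic bookkeeping.
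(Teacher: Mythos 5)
Your proof is correct. The paper does not prove Proposition \ref{pro.1.1} itself but cites it as known from Dunford--Schwartz, and your argument --- reduction to an equivalent finite measure, the order isomorphism onto $[0,1]$-valued classes, and the integral-maximization essential-supremum construction over countable subfamilies, with (1)--(3) read off from the witnessing countable family --- is exactly the classical proof that the cited reference supplies.
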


In the sequel of this paper, we also employ the following notations and terminologies:

$\bar{L}_{+}^{0}(\mathcal{F})=\{\xi\in \bar{L}^{0}(\mathcal{F}):\xi\geq 0\}$.

$L_{+}^{0}(\mathcal{F})=\{\xi\in L^{0}(\mathcal{F}):\xi\geq 0\}$.

As usual, for any $\xi$ and $\eta$ in $\bar{L}^0(\mathcal{F})$, $\xi > \eta$  means $\xi \geq \eta$ but $\xi \neq \eta$, where as $\xi > \eta$ on $A$ for any $A\in \mathcal{F}$ means $\xi^0(\omega)>\eta^0(\omega)$ a.e. on $A$, where $\xi^0$ and $\eta^0$ are arbitrarily chosen representatives of $\xi$ and $\eta$, respectively.

$\bar{L}_{++}^{0}(\mathcal{F})=\{\xi\in \bar{L}^{0}(\mathcal{F}):\xi>0~\text{on}~\Omega\}$.

$L_{++}^{0}(\mathcal{F})=\{\xi\in L^{0}(\mathcal{F}):\xi>0~\text{on}~\Omega\}$.

Let $B$ be a complete Boolean algebra, a nonempty subset $\{a_{i},i\in I\}$ of $B$ is called a partition of unity if $\vee_{i\in I}a_{i}=1$ and $a_{i}\wedge a_{j}= 0$ where $i\neq j$.

For the $\sigma$-finite measure space $(\Omega,\mathcal{F},\mu)$, two elements $A$ and $D$ is said to be equivalent if $\mu(A\bigtriangleup D)=0$ (where $A\bigtriangleup D=(A\verb|\|D)\cup (D\verb|\| A)$ stands for the symmetric difference of $A$ and $D$). Throughout this paper, for any $A\in \mathcal{F}$, we always use the corresponding lowercase letter $a$ for the equivalence class $[A]$ of $A$, and we also use $B_{\mathcal{F}}$ for the complete Boolean algebra of equivalence classes of elements of $\mathcal{F}$, well known as the measure algebra associated with $(\Omega,\mathcal{F},\mu)$. Let us further recall that, for any $a=[A]$ and $d=[D]$ in $B_{\mathcal{F}}$, $a\leq d$ iff $\mu(A\verb|\|D)=0$; moreover, $0=[\emptyset],~1=[\Omega]$, and the complement $a^{c}$ of $a=[A]$ is $[A^{c}]$. It should be also noticed that $\{i\in I:a_{i}>0\}$ must be at most countable for any partition $\{a_{i},i\in I\}$ of unity in $B_{\mathcal{F}}$, and thus when we speak of partitions of unity in $B_{\mathcal{F}}$ we only need to consider the partitions of unity which are at most countable.

Here, we would like to remind the reader of the change of a writing convention: in our previous papers (e.g. \cite{Guo3,GWYZ}), we used to employ $\tilde{I}_{A}$ for the equivalence class of $I_{A}$ for any given $A\in \mathcal{F}$, where $I_{A}$ stands for the characteristic function of $A$, namely $I_{A}(\omega)=1$ if $\omega\in A$ and $0$ otherwise, but in this paper we usually use $I_{a}$ for $\tilde{I}_{A}$, where $a$ is the equivalence class of $A$, since we want to state the definitions and results of this paper in the language of a Boolean algebra.

Let us end the introduction by recalling the notion of a regular $L^{0}(\mathcal{F},\mathbb{K})$-module (namely, a regular left module over the algebra $L^{0}(\mathcal{F},\mathbb{K})$) from \cite{Guo3,WGL} as follows. An $L^{0}(\mathcal{F},\mathbb{K})$-module $S$ is said to be regular if, for any two elements $x$ and $y$ in $S$, there is a partition $\{a_{n},n\in \mathbb{N}\}$ of unity in $B_{\mathcal{F}}$ such that $I_{a_{n}}x=I_{a_{n}}y$ for each $n\in \mathbb{N}$, then $x$ must equal to $y$ (equivalently, if, for any element $x$ in $S$, there is a partition $\{a_{n},n\in \mathbb{N}\}$ of unity in $B_{\mathcal{F}}$ such that $I_{a_{n}}x=\theta$ for each $n\in \mathbb{N}$, then it must hold that $x=\theta$ (the null element of $S$)). Throughout this paper, we always assume that all the $L^{0}(\mathcal{F},\mathbb{K})$-modules occurring in the sequel of this paper are regular, the restriction is not excessive since all random normed modules and random locally convex modules are regular.

%###################################################################################################
%###################################################################################################
\section{On $\sigma$-stability and $d$-$\sigma$-stability}
%###################################################################################################
%###################################################################################################

The aim of this section is to prove that the notion of $d$-$\sigma$-stability for a nonempty subset in a random metric space can be regarded as a special case of the notion of $\sigma$-stability for a nonempty subset in a random normed module, which will be achieved by isometrically embedding a random metric space into a properly constructed $\mathcal{T}_{\varepsilon,\lambda}$-complete random normed module. The main results in this section are Theorems \ref{thm.2.6}, \ref{thm.2.17}, \ref{thm2.18} and \ref{thm.2.21}. For this, we first introduce the relevant notions in random functional analysis as follows.

The original notions of a random metric space and a random normed space were introduced in \cite[Chapters 9 and 15]{SS} by defining the random distance between two points or the random norm of a vector as a nonnegative random variable defined on a probability space, the following slightly modified versions of them were given in \cite{Guo6a} (see \cite{Guo1,Guo2} for the reason why we gave such modified versions) by defining random distances or random norms as equivalence classes of nonnegative random variables or measurable functions in order to provide a convenient connection with the problems in functional analysis. Based on the modified version of a random normed space, the notion of a random normed module was introduced in \cite{Guo6aa,Guo1}, which leads to a series of subsequent developments of random functional analysis, see \cite{Guo3} for a historical survey. It should also be mentioned that the notion of an $RN$ module was independently introduced in \cite{HLR} (where it is called a randomly normed $L^0$-module) as a tool for the study of ultrapower of Lebesgue-Bocher function spaces.

\begin{defn}\label{def.2.1}
	An ordered pair $(S,d)$ is called a random metric space (briefly, an $RM$ space) with base $(\Omega,\mathcal{F},\mu)$ if $S$ is a nonempty set and $d$ is a mapping from $S\times S$ to $L_{+}^{0}(\mathcal{F})$ such that the following axioms are satisfied:
	\begin{itemize}
	\item [(1)] $d(x,y)=0$ iff $x=y$;
	\item [(2)] $d(x,y)=d(y,x)$ for any $x$ and $y$ in $S$;
	\item [(3)] $d(x,z)\leq d(x,y)+d(y,z)$ for any $x,y$ and $z$ in $S$.
	\end{itemize}
	As usual, $d$ is called the random metric on $S$.	
\end{defn}

For the study of the uniformity and the topology on an $RM$ space, a probability measure $P_{\mu}$ associated with the measure space $(\Omega,\mathcal{F},\mu)$ is defined as follows. When $(\Omega,\mathcal{F},\mu)$ is a finite measure space, $P_{\mu}(A)=\frac{\mu(A)}{\mu(\Omega)}$ for each $A\in \mathcal{F}$; when $(\Omega,\mathcal{F},\mu)$ is a $\sigma$-finite measure space, for example, let $\{A_{n},n\in \mathbb{N}\}$ be a countable partition of $\Omega$ to $\mathcal{F}$ such that $0<\mu(A_{n})<+\infty$ for each $n$ in $\mathbb{N}$, then $P_{\mu}(A)=\sum_{n=1}^{\infty}\frac{\mu(A\cap A_{n})}{2^{n}\mu(A_{n})}$ for each $A\in \mathcal{F}$.

The idea of introducing the $(\varepsilon,\lambda)$-uniformity (structure) inherits from that of introducing the $(\varepsilon,\lambda)$-uniformity for an abstract probabilistic metric space by Schweizer and Sklar in \cite{SS}, but the method of employing $P_{\mu}$ to introduce the $(\varepsilon,\lambda)$-uniformity and the $(\varepsilon,\lambda)$-topology for an $RM$ space with a $\sigma$-finite measure space $(\Omega,\mathcal{F},\mu)$ as base in Proposition \ref{pro.2.2} below belongs to \cite{Guo2}. The idea of introducing $L^{0}$-uniformity is very similar to that of introducing the locally $L^{0}$-convex topology by Filipovi\'{c}, et.al in \cite{FKV}.

\begin{prop}\label{pro.2.2}
	Let $(S,d)$ be an $RM$ space with base $(\Omega,\mathcal{F},\mu)$. Given $\varepsilon>0$ and $0<\lambda<1$, let $U(\varepsilon,\lambda)=\{(x,y)\in S\times S:P_{\mu}\{\omega\in \Omega:d(x,y)(\omega)<\varepsilon\}>1-\lambda\}$. Then $\mathcal{U}=\{U(\varepsilon,\lambda):\varepsilon>0,0<\lambda<1\}$ forms a base for some metrizable uniformity on $S$, called the $(\varepsilon,\lambda)$-uniformity for $S$ induced by $d$, and the topology induced by the $(\varepsilon,\lambda)$-uniformity is called the $(\varepsilon,\lambda)$-topology for $S$. Given $\varepsilon\in L_{++}^{0}(\mathcal{F})$, let $U'(\varepsilon)=\{(x,y)\in S\times S:d(x,y)<\varepsilon~\text{on}~\Omega\}$. Then $\mathcal{U}'=\{U'(\varepsilon):\varepsilon\in L_{++}^{0}(\mathcal{F})\}$ forms a base for some Hausdorff uniformity on $S$, called the $L^{0}$-uniformity for $S$ induced by $d$, and the topology induced by the $L^{0}$-uniformity is called the $L^{0}$-topology for $S$.
\end{prop}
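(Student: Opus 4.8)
The plan is to verify directly that each of $\mathcal{U}$ and $\mathcal{U}'$ satisfies the four standard axioms for a base of a diagonal uniformity on $S$: every member contains the diagonal $\Delta=\{(x,x):x\in S\}$; every member is symmetric; the family is downward directed under intersection; and every member $U$ admits a member $V$ with $V\circ V\subseteq U$. Two of these are immediate from the metric axioms of Definition \ref{def.2.1}. Since $d(x,x)=0$ by axiom (1), and $\varepsilon>0$ (resp. $\varepsilon>0$ on $\Omega$), we get $(x,x)\in U(\varepsilon,\lambda)$ and $(x,x)\in U'(\varepsilon)$, so both families cover $\Delta$; and the symmetry axiom $d(x,y)=d(y,x)$ gives $U(\varepsilon,\lambda)=U(\varepsilon,\lambda)^{-1}$ and $U'(\varepsilon)=U'(\varepsilon)^{-1}$ at once.

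The composition axiom is where the triangle inequality does the real work. For $\mathcal{U}$ I would take $V=U(\varepsilon/2,\lambda/2)$ and show $V\circ V\subseteq U(\varepsilon,\lambda)$: if $(x,y),(y,z)\in V$, then writing $A=\{\omega:d(x,y)(\omega)<\varepsilon/2\}$ and $C=\{\omega:d(y,z)(\omega)<\varepsilon/2\}$ we have $P_\mu(A)>1-\lambda/2$ and $P_\mu(C)>1-\lambda/2$, hence $P_\mu(A\cap C)\ge P_\mu(A)+P_\mu(C)-1>1-\lambda$ by the elementary bound $P_\mu(A\cap C)\ge P_\mu(A)+P_\mu(C)-1$. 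On $A\cap C$ the triangle inequality (axiom (3)) gives $d(x,z)\le d(x,y)+d(y,z)<\varepsilon$, so $P_\mu\{\omega:d(x,z)(\omega)<\varepsilon\}\ge P_\mu(A\cap C)>1-\lambda$, i.e. $(x,z)\in U(\varepsilon,\lambda)$. For $\mathcal{U}'$ the analogous step is cleaner: with $V=U'(\varepsilon/2)$ (note $\varepsilon/2\in L^{0}_{++}(\mathcal{F})$), the pointwise inequalities $d(x,y)<\varepsilon/2$ and $d(y,z)<\varepsilon/2$ on $\Omega$ add to give $d(x,z)<\varepsilon$ on $\Omega$, so $V\circ V\subseteq U'(\varepsilon)$.

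For the directedness axiom, given $U(\varepsilon_1,\lambda_1)$ and $U(\varepsilon_2,\lambda_2)$ I would put $\varepsilon=\min(\varepsilon_1,\varepsilon_2)$ and $\lambda=\min(\lambda_1,\lambda_2)$ and check $U(\varepsilon,\lambda)\subseteq U(\varepsilon_1,\lambda_1)\cap U(\varepsilon_2,\lambda_2)$ using monotonicity of $P_\mu\{d<\cdot\}$ in $\varepsilon$ and of $1-\lambda$ in $\lambda$; for $\mathcal{U}'$ I would take $\varepsilon=\varepsilon_1\wedge\varepsilon_2$, which lies in $L^{0}_{++}(\mathcal{F})$ since $L^{0}(\mathcal{F})$ is a lattice by Proposition \ref{pro.1.1}(3), and argue identically. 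This establishes that $\mathcal{U}$ and $\mathcal{U}'$ are bases for uniformities. To upgrade $\mathcal{U}$ to a metrizable uniformity I would invoke the standard criterion that a uniformity is (pseudo)metrizable iff it has a countable base, combined with Hausdorffness. The countable subfamily $\{U(1/n,1/n):n\in\mathbb{N}\}$ is a base: given $\varepsilon>0$ and $0<\lambda<1$, any $n$ with $1/n\le\min(\varepsilon,\lambda)$ yields $U(1/n,1/n)\subseteq U(\varepsilon,\lambda)$ by the directedness computation.

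Finally, Hausdorffness of both uniformities reduces to showing $\bigcap U(\varepsilon,\lambda)=\Delta$ and $\bigcap U'(\varepsilon)=\Delta$, and this is the one place needing a limiting argument rather than pure algebra. If $(x,y)$ lies in every $U(\varepsilon,\lambda)$, then fixing $\varepsilon$ and letting $\lambda\downarrow 0$ gives $P_\mu\{\omega:d(x,y)(\omega)<\varepsilon\}=1$ for every $\varepsilon>0$; applying this along $\varepsilon=1/n$ and using continuity of $P_\mu$ from above on the decreasing sets $\{d(x,y)<1/n\}\downarrow\{d(x,y)=0\}$ yields $P_\mu\{d(x,y)=0\}=1$, i.e. $d(x,y)=0$ in $L^{0}_{+}(\mathcal{F})$, whence $x=y$ by axiom (1). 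For $\mathcal{U}'$ it suffices to test against the constant classes $\varepsilon=1/n\in L^{0}_{++}(\mathcal{F})$, giving $d(x,y)<1/n$ on $\Omega$ for all $n$ and hence $d(x,y)=0$. I expect no genuine obstacle: the only points requiring care are the correct simultaneous halving of $\varepsilon$ and $\lambda$ in the composition step and the measure-continuity argument for the Hausdorff property, while everything else is the routine translation of the metric axioms into entourage language.
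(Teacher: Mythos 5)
Your proof is correct and is exactly the routine verification of the uniformity-base axioms that the paper omits (Proposition \ref{pro.2.2} is stated without proof, the method being attributed to \cite{Guo2} and \cite{SS}): the halving of $\varepsilon$ and $\lambda$ in the composition step, the countable base $\{U(1/n,1/n)\}$ for metrizability, and the measure-continuity argument for separatedness are all the standard ingredients. The only point worth making explicit is that the pointwise inequalities are between arbitrarily chosen representatives and hold a.e., which does not affect any of the $P_{\mu}$-estimates.
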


In the sequel of this paper, for any $RM$ space $(S,d)$, we always use $\mathcal{U}_{\varepsilon,\lambda}$ and $\mathcal{T}_{\varepsilon,\lambda}$ for the $(\varepsilon,\lambda)$-uniformity and the $(\varepsilon,\lambda)$-topology induced by $d$, and $\mathcal{U}_{c}$ and $\mathcal{T}_{c}$ for the $L^{0}$-uniformity and the $L^{0}$-topology induced by $d$. We say that $(S,d)$ is $(\varepsilon,\lambda)$-complete ($L^{0}$-complete) if $S$ is complete with respect to $\mathcal{U}_{\varepsilon,\lambda}$ ($\mathcal{U}_{c}$).

\begin{rem}\label{rem.2.3}
	When $(\Omega,\mathcal{F},\mu)$ is a probability space, the $(\varepsilon,\lambda)$-uniformity and the $L^{0}$-uniformity were deeply studied in \cite{GWYZ}, at this time a sequence $\{x_{n},n\in \mathbb{N}\}$ in $(S,d)$ converges in $\mathcal{T}_{\varepsilon,\lambda}$ to $x$ in $S$ iff $\{d(x_{n},x),n\in \mathbb{N}\}$ converges in probability measure to $0$. When $(\Omega,\mathcal{F},\mu)$ ia a $\sigma$-finite measure space, a sequence $\{x_{n},n\in \mathbb{N}\}$ in $(S,d)$ converges in $\mathcal{T}_{\varepsilon,\lambda}$ to $x$ in $S$ iff $\{d(x_{n},x),n\in \mathbb{N}\}$ converges locally in measure $\mu$ to $0$, namely, for each $A\in \mathcal{F}$ with $0<\mu(A)<+\infty$, $\{d(x_{n},x),n\in \mathbb{N}\}$ converges in measure $\mu$ to $0$ on $A$, it is in order to obtain the topology of convergence locally in measure that we employ $P_{\mu}$ in Proposition \ref{pro.2.2} as Guo earlier did in \cite{Guo2}. As to the problem of why the topology of convergence locally in measure is important, please refer to Remark \ref{rem.2.5} of this paper. From a formal perspective, $L^{0}$-uniformity is very similar to the uniformity of an ordinary metric space, it is, however, not metrizable in general even when $(\Omega,\mathcal{F},\mu)$ is a finite measure space. Besides, it is also interesting to consider the local $L^{0}$-uniformity for an $RM$ space $(S,d)$ with base $(\Omega,\mathcal{F},\mu)$: given $\varepsilon\in L_{++}^{0}(\mathcal{F})$ and $A\in \mathcal{F}$ with $0<\mu(A)<+\infty$, let $U(A,\varepsilon)=\{(x,y)\in S\times S:d(x,y)<\varepsilon~\text{on}~A\}$. Then $\mathcal{U}_{loc}=\{U(A,\varepsilon):A\in \mathcal{F} ~\text{with}~ 0<\mu(A)<+\infty,\varepsilon\in L_{++}^{0}(\mathcal{F})\}$ is also a base for some Hausdorff uniformity on $S$, called the local $L^{0}$-uniformity for $S$, which is weaker than the $L^{0}$-uniformity but stronger than the $(\varepsilon,\lambda)$-uniformity. We only use the $L^{0}$-uniformity in this paper in order to be in accordance with the relevant study in \cite{DJKK}.
\end{rem}

\begin{defn}\label{def.2.4}
	An ordered pair $(S,\|\cdot\|)$ is called a random normed space (briefly, an $RN$ space) over the scalar field $\mathbb{K}$ with base $(\Omega,\mathcal{F},\mu)$ if $S$ is a linear space over $\mathbb{K}$ and $\|\cdot\|$ is a mapping from $S$ to $L_{+}^{0}(\mathcal{F})$ such that the following axioms are satisfied:\\
	(RN-1) $\|x\|=0$ iff $x=\theta$;\\
	(RN-2) $\|\alpha x\|=|\alpha|\|x\|$ for any $x$ in $S$ and $\alpha$ in $\mathbb{K}$;\\
	(RN-3) $\|x+y\|\leq \|x\|+\|y\|$ for any $x$ and $y$ in $S$.	\\
%	\begin{itemize}
%		\item [(RN-1)] $\|x\|=0$ iff $x=\theta$;
%		\item [(RN-2)] $\|\alpha x\|=|\alpha|\|x\|$ for any $x$ in $S$ and $\alpha$ in $\mathbb{K}$;
%		\item [(RN-3)]$\|x+y\|\leq \|x\|+\|y\|$ for any $x$ and $y$ in $S$.	
%	\end{itemize}
	As usual, $\|\cdot\|$ is called the random norm on $S$. In addition, if $S$ is an $L^{0}(\mathcal{F},\mathbb{K})$-module such that the following axiom is also satisfied:\\
	(RNM-1) $\|\xi x\|=|\xi|\|x\|$ for any $\xi$ in $L^{0}(\mathcal{F},\mathbb{K})$ and any $x$ in $S$,\\
%	\begin{itemize}
%		\item [(RNM-1)] $\|\xi x\|=|\xi|\|x\|$ for any $\xi$ in $L^{0}(\mathcal{F},\mathbb{K})$ and any $x$ in $S$,
%	\end{itemize}
	then the $RN$ space $(S,\|\cdot\|)$ is called a random normed module (briefly, an $RN$ module) over $\mathbb{K}$ with base $(\Omega,\mathcal{F},\mu)$, in which case the random norm (namely, satisfying (RNM-1) is also called $L^{0}$-norm). Naturally, if a mapping $\|\cdot\|$ from $S$ to $L_{+}^{0}(\mathcal{F})$ only satisfies (RN-3) and (RNM-1), then it is called an $L^{0}$-seminorm on the $L^{0}(\mathcal{F},\mathbb{K})$-module $S$.
\end{defn}

The algebra $L^{0}(\mathcal{F},\mathbb{K})$ is a simplest $RN$ module over $\mathbb{K}$ with base $(\Omega,\mathcal{F},\mu)$ when it is endowed with the random norm defined by $\|\xi\|=|\xi|$ for any $\xi$ in $L^{0}(\mathcal{F},\mathbb{K})$. When $L^{0}(\mathcal{F},\mathbb{K})$ is used as an $RN$ module in random functional analysis, its random norm or $L^{0}$-norm is always assumed to be $|\cdot|$ (namely, the absolute value mapping). For any $RN$ space $(S,\|\cdot\|)$ over $\mathbb{K}$ with base $(\Omega,\mathcal{F},\mu)$, $d:S\times S\rightarrow L_{+}^{0}(\mathcal{F})$ defined by $d(x,y)=\|x-y\|$ for any $x$ and $y$ in $S$, is clearly a random metric on $S$, the $(\varepsilon,\lambda)$-topology induced by $d$ is, as usual, also called the  $(\varepsilon,\lambda)$-topology for the $RN$ space, and it is easy to check that the $(\varepsilon,\lambda)$-topology $\mathcal{T}_{\varepsilon,\lambda}$ on $(S,\|\cdot\|)$ is always a metrizable linear topology, in particular $(L^{0}(\mathcal{F},\mathbb{K}),\mathcal{T}_{\varepsilon,\lambda})$ is a metrizable topological algebra over $\mathbb{K}$ (namely, the algebraic multiplication operation is also jointly continuous). Furthermore, when $(S,\|\cdot\|)$ is an $RN$ module over $\mathbb{K}$ with base $(\Omega,\mathcal{F},\mu)$, $(S,\mathcal{T}_{\varepsilon,\lambda})$ is a metrizable topological module over the topological algebra $(L^{0}(\mathcal{F},\mathbb{K}),\mathcal{T}_{\varepsilon,\lambda})$, namely, the module multiplication $\cdot:L^{0}(\mathcal{F},\mathbb{K})\times S\rightarrow S$ is jointly continuous under the respective $(\varepsilon,\lambda)$-topologies of $L^{0}(\mathcal{F},\mathbb{K})$ and $S$. Similarly, the $L^{0}$-topology $\mathcal{T}_{c}$ induced by the corresponding random metric $d$ is just the locally $L^{0}$-convex topology introduced in \cite{FKV} for an $RN$ module $(S,\|\cdot\|)$, but $\mathcal{T}_{c}$ is no longer a linear topology in general even when its base space $(\Omega,\mathcal{F},\mu)$ is a finite measure space since the scalar multiplication is nolonger continuous, for example, $(L^{0}(\mathcal{F},\mathbb{K}), \mathcal{T}_{c})$ is merely a topological ring (see \cite{FKV} for details), $(S,\mathcal{T}_{c})$ is also merely a topological module over the topological ring $(L^{0}(\mathcal{F},\mathbb{K}), \mathcal{T}_{c})$ for any $RN$ module over $\mathbb{K}$ with base  $(\Omega,\mathcal{F},\mu)$.

\begin{rem}\label{rem.2.5}
	Since $L^{0}(\mathcal{F},\mathbb{K})$ is an algebra with the unit element $1$ (namely the equivalence class determined by the constant function with value $1$ on $(\Omega,\mathcal{F},\mu)$), (RNM-1) in Definition \ref{def.2.4} is compatible with (RN-2), and thus (RNM-1) strengthens (RN-2) in the case of an $RN$ module, then in this case we can replace (RN-2) with (RNM-1). By the way, we should point out that although the topology of convergence in measure $\mu$ is metrizable, it is not a linear topology when $\mu$ is $\sigma$-finite but not finite, which is the reason why we employ $P_{\mu}$ in introducing the $(\varepsilon,\lambda)$-uniformity in Proposition \ref{pro.2.2}.
\end{rem}

\begin{thm}\label{thm.2.6}
	Let $(S,d)$ be an $RM$ space with base $(\Omega,\mathcal{F},\mu)$ and $\mathcal{U}_{b}(S,L^{0}(\mathcal{F}))$ be the set of all the uniformly continuous mappings $f$ from $(S,\mathcal{U}_{\varepsilon,\lambda})$ to $(L^{0}(\mathcal{F},$
$\mathbb{K}),\mathcal{U}_{\varepsilon,\lambda})$ such that $\bigvee\{|f(p)|:p\in S\}\in L_{+}^{0}(\mathcal{F})$ (namely, $f$ is a.e. bounded). Then we have the following statements:
	\begin{itemize}
		\item [(1)] $\mathcal{U}_{b}(S,L^{0}(\mathcal{F}))$ becomes a $\mathcal{T}_{\varepsilon,\lambda}$-complete $RN$ module over $\mathbb{R}$ with base $(\Omega,\mathcal{F},\mu)$ under the ordinary addition operation $+$ defined by $(f_{1}+f_{2})(p)=f_{1}(p)+f_{2}(p)$ for any $f_{1}$ and $f_{2}$ in $\mathcal{U}_{b}(S,L^{0}(\mathcal{F}))$ and any $p\in S$, and the module multiplication $\cdot$ defined by $(\xi f)(p)=\xi\cdot(f(p))$ for any $(\xi,f)\in L^{0}(\mathcal{F})\times \mathcal{U}_{b}(S,L^{0}(\mathcal{F}))$ and any $p\in S$, while $\mathcal{U}_{b}(S,L^{0}(\mathcal{F}))$ is endowed with the $L^{0}$-norm $\|\cdot\|$ defined by $\|f\|=\bigvee\{|f(p)|:p\in S\}$ for any $f\in \mathcal{U}_{b}(S,L^{0}(\mathcal{F}))$.
		\item [(2)]	Define $T:S\rightarrow \mathcal{U}_{b}(S,L^{0}(\mathcal{F}))$ as follows: arbitrarily fix an element $p_{0}\in S$, $T(p)=d(p,\cdot)-d(p_{0},\cdot)$ for any $p\in S$, then $T$ is an isometric embedding of $(S,d)$ into $(\mathcal{U}_{b}(S,L^{0}(\mathcal{F})),\|\cdot\|)$, namely $\|T(p)-T(q)\|=d(p,q)$ for any $p$ and $q$ in $S$.
	\end{itemize}
\end{thm}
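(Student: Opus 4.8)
The plan is to treat the two parts separately, since part (2) is a short computation once the ambient module $\mathcal{U}_{b}(S,L^{0}(\mathcal{F}))$ from part (1) is in place, and the only genuinely analytic point is the $\mathcal{T}_{\varepsilon,\lambda}$-completeness. For part (1), I would first check that the stated operations keep us inside $\mathcal{U}_{b}(S,L^{0}(\mathcal{F}))$. Closure under addition is immediate: a sum of uniformly continuous maps is uniformly continuous, and $|f_{1}(p)+f_{2}(p)|\leq|f_{1}(p)|+|f_{2}(p)|\leq\|f_{1}\|+\|f_{2}\|$ for every $p$, so $f_{1}+f_{2}$ is a.e.\ bounded and, taking the supremum over $p$, the triangle inequality (RN-3) for $\|\cdot\|$ drops out simultaneously. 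The one point needing care is closure under the module multiplication, i.e.\ that $\xi f$ stays uniformly continuous. For this I would prove the auxiliary fact that, for fixed $\xi\in L^{0}(\mathcal{F})$, the multiplication operator $M_{\xi}\colon(L^{0}(\mathcal{F}),\mathcal{U}_{\varepsilon,\lambda})\to(L^{0}(\mathcal{F}),\mathcal{U}_{\varepsilon,\lambda})$ is uniformly continuous: given $\varepsilon>0$ and $0<\lambda<1$, since $\xi$ is a.e.\ finite one picks $N$ with $P_{\mu}(|\xi|>N)<\lambda/2$ and then takes $\delta=\varepsilon/N$, $\sigma=\lambda/2$, so that $P_{\mu}(|\eta_{1}-\eta_{2}|<\delta)>1-\sigma$ forces $P_{\mu}(|\xi\eta_{1}-\xi\eta_{2}|<\varepsilon)>1-\lambda$. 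Then $\xi f=M_{\xi}\circ f$ is uniformly continuous as a composition, while $\|\xi f\|=\bigvee_{p}|\xi|\,|f(p)|=|\xi|\bigvee_{p}|f(p)|=|\xi|\,\|f\|$ (nonnegative scalars in $L^{0}_{+}(\mathcal{F})$ factor through suprema) yields both a.e.\ boundedness and (RNM-1). Axiom (RN-1) is clear, so $\mathcal{U}_{b}(S,L^{0}(\mathcal{F}))$ is an $RN$ module; throughout I would use Proposition \ref{pro.1.1}(1) to realise each supremum $\bigvee\{|f(p)|:p\in S\}$ by a countable subfamily, which legitimises the measurability and lattice manipulations.

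The heart of the matter, and the step I expect to be the main obstacle, is $\mathcal{T}_{\varepsilon,\lambda}$-completeness. Since $\mathcal{T}_{\varepsilon,\lambda}$ is metrizable it suffices to show that a Cauchy sequence $\{f_{n}\}$ has a convergent subsequence. The key structural remark is that $|f_{n}(p)-f_{m}(p)|\leq\|f_{n}-f_{m}\|$ for \emph{every} $p$ simultaneously, so Cauchyness in $\|\cdot\|$ is uniform in $p$. Working with the probability measure $P_{\mu}$ (which has the same null sets as $\mu$, so that $\mathcal{T}_{\varepsilon,\lambda}$-convergence is convergence in $P_{\mu}$-measure), I would extract a subsequence with $P_{\mu}(\|f_{n_{k+1}}-f_{n_{k}}\|>2^{-k})<2^{-k}$ and apply the Borel--Cantelli lemma to get $\sum_{k}\|f_{n_{k+1}}-f_{n_{k}}\|<+\infty$ a.e. For a.e.\ $\omega$ this bound is uniform over all $p\in S$, so $\{f_{n_{k}}(p)(\omega)\}_{k}$ is uniformly Cauchy in $p$ and defines a pointwise limit $g(p)\in L^{0}(\mathcal{F})$ with $\sup_{p}|f_{n_{k}}(p)(\omega)-g(p)(\omega)|\to0$ a.e. Then $g$ is a.e.\ bounded, since $\sup_{p}|g(p)|\leq\|f_{n_{1}}\|+\sum_{k}\|f_{n_{k+1}}-f_{n_{k}}\|<+\infty$ a.e.; and $g$ is uniformly continuous by the standard three-$\varepsilon$ argument, choosing $k$ so that $\|f_{n_{k}}-g\|$ is small with high $P_{\mu}$-probability and feeding the uniform continuity of $f_{n_{k}}$ through $|g(p)-g(q)|\leq|g(p)-f_{n_{k}}(p)|+|f_{n_{k}}(p)-f_{n_{k}}(q)|+|f_{n_{k}}(q)-g(q)|$. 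Hence $g\in\mathcal{U}_{b}(S,L^{0}(\mathcal{F}))$ and $f_{n_{k}}\to g$ in $\mathcal{T}_{\varepsilon,\lambda}$, which together with Cauchyness forces $f_{n}\to g$.

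For part (2) I would first verify $T(p)\in\mathcal{U}_{b}(S,L^{0}(\mathcal{F}))$. The reverse triangle inequality for $d$ gives $|d(p,q)-d(p_{0},q)|\leq d(p,p_{0})$ for all $q$, so $T(p)$ is a.e.\ bounded with $\|T(p)\|\leq d(p,p_{0})$, while $|T(p)(q)-T(p)(q')|\leq|d(p,q)-d(p,q')|+|d(p_{0},q)-d(p_{0},q')|\leq 2\,d(q,q')$ shows $T(p)$ is (uniformly) Lipschitz in the random metric, hence uniformly continuous. The isometry is then a direct computation: $(T(p)-T(q))(r)=d(p,r)-d(q,r)$, so $|(T(p)-T(q))(r)|\leq d(p,q)$ for every $r$ by the reverse triangle inequality, giving $\|T(p)-T(q)\|\leq d(p,q)$; and evaluating at $r=q$ yields $|d(p,q)-d(q,q)|=d(p,q)$, so the supremum is attained and $\|T(p)-T(q)\|=d(p,q)$. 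This finishes the isometric embedding. I expect all of part (2) and the algebraic half of part (1) to be routine once the two reverse-triangle bounds are recorded, with the completeness argument carrying the real weight; the subtle spots to double-check are the uniform continuity of $M_{\xi}$ and the legitimacy of interchanging the pointwise $L^{0}$-limit with the essential supremum defining $\|\cdot\|$.
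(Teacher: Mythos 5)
Your proposal is correct and follows essentially the same route as the paper: the algebraic axioms are treated as routine, the real content is $\mathcal{T}_{\varepsilon,\lambda}$-completeness, which both you and the paper establish by extracting a subsequence whose mutual distances $\|f_{n_k}-f_{n_l}\|$ are controlled a.e.\ uniformly in $p$ (you via Borel--Cantelli and summable increments, the paper via an a.e.-Cauchy subsequence plus Egoroff's theorem), followed by the same three-$\varepsilon$ argument for the uniform continuity of the limit. Part (2) is the standard Kuratowski-type computation that the paper simply delegates to the Embedding Lemma in Aliprantis--Border, and your explicit reverse-triangle-inequality argument (with the supremum attained at $r=q$) is exactly that proof.
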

\begin{proof}	
	(1) Since $\mu$ and $P_{\mu}$ are equivalent to each other, namely for any $A\in \mathcal{F},~\mu(A)=0$ iff $P_{\mu}(A)=0$, and the proof is only related to equivalence of measures, we can, without loss of generality, assume that $\mu$ is a probability measure, at this time $\mu=P_{\mu}$. Since $(\mathcal{U}_{b}(S,L^{0}(\mathcal{F})),\|\cdot\|)$ is clearly an $RN$ module, we only need to prove that it is $\mathcal{T}_{\varepsilon,\lambda}$-complete.
	
	Now, let $\{f_{n},n\in \mathbb{N}\}$ be a $\mathcal{T}_{\varepsilon,\lambda}$-Cauchy sequence in $(\mathcal{U}_{b}(S,L^{0}(\mathcal{F})),\|\cdot\|)$, since $|f_{n}(p)-f_{m}(p)|\leq \|f_{n}-f_{m}\|$ for any $p\in S$ and any $n$ and $m$ in $\mathbb{N}$, then it is obvious that $\{f_{n}(p),n\in \mathbb{N}\}$ is also a $\mathcal{T}_{\varepsilon,\lambda}$-Cauchy sequence in $L^{0}(\mathcal{F})$ for each $p$ in $S$, further by $\mathcal{T}_{\varepsilon,\lambda}$-completeness of $L^{0}(\mathcal{F})$, $f:S\rightarrow L^{0}(\mathcal{F})$ defined by $f(p)=\mathcal{T}_{\varepsilon,\lambda}$-limit of $\{f_{n}(p),n\in \mathbb{N}\}$ for each $p\in S$, is well defined. We continue to prove that $f\in \mathcal{U}_{b}(S,L^{0}(\mathcal{F}))$ and $\{\|f_{n}-f\|,n\in \mathbb{N}\}$ converges in probability measure $\mu$ to $0$ as follows.
	
	Since $\{f_{n},n\in \mathbb{N}\}$ is $\mathcal{T}_{\varepsilon,\lambda}$-Cauchy, there exists a subsequence $\{f_{n_{k}},k\in \mathbb{N}\}$ is a.s. Cauchy, namely $\lim_{k,l\rightarrow +\infty}\|f_{n_{k}}-f_{n_{l}}\|(\omega)=0~a.e.$, then it follows immediately from $|\|f_{n_{k}}\|-\|f_{n_{l}}\||\leq \|f_{n_{k}}-f_{n_{l}}\|$ that $\vee_{k\geq 1}\|f_{n_{k}}\|\in L_{+}^{0}(\mathcal{F})$, it is also easy to see, from $|f_{n_{k}}(p)-f_{n_{l}}(p)|\leq \|f_{n_{k}}-f_{n_{l}}\|$ for any $p\in S$, that $\{|f_{n_{k}}(p)-f(p)|,k\in \mathbb{N}\}$ converges a.s. to $0$ for any $p\in S$, which implies that $|f(p)|=a.e.-\lim_{k\rightarrow +\infty}|f_{n_{k}}(p)|\leq \vee_{k\geq 1}\|f_{n_{k}}\|$ for any $p\in S$, namely $\bigvee\{|f(p)|:p\in S\}\in L_{+}^{0}(\mathcal{F})$. It remains to prove that $f$ is uniformly continuous from $(S,\mathcal{U}_{\varepsilon,\lambda})$ to $(L^{0}(\mathcal{F}),\mathcal{U}_{\varepsilon,\lambda})$ and $\{f_{n_{k}},k\in \mathbb{N}\}$ converges in $\mathcal{T}_{\varepsilon,\lambda}$ to $f$ as follows.
	
	Since $\{f_{n_{k}},k\in \mathbb{N}\}$ is a.e. Cauchy, by Egoroff's theorem $\{f_{n_{k}},k\in \mathbb{N}\}$ is also almost uniformly (or $\mu$-uniformly) Cauchy, namely, for any $\varepsilon>0$ and $0<\lambda<1$, there exist $N(\varepsilon,\lambda)\in \mathbb{N}$ and $\Omega(\varepsilon,\lambda)\in \mathcal{F}$ such that $\mu(\Omega(\varepsilon,\lambda))>1-\lambda$ and $\|f_{n_{k}}-f_{n_{l}}\|(\omega)\leq \varepsilon$ on $\Omega(\varepsilon,\lambda)$ whenever $k,l\geq N(\varepsilon,\lambda)$. From $|f_{n_{k}}(p)-f_{n_{l}})(p)|\leq \|f_{n_{k}}-f_{n_{l}}\|$ for any $p\in S$, one can have that $|f_{n_{k}}(p)-f_{n_{l}})(p)|\leq \varepsilon$ on $\Omega(\varepsilon,\lambda)$ for any $k,l\geq N(\varepsilon,\lambda)$ and any $p\in S$. Now, letting $l\rightarrow +\infty$ yields that $|f_{n_{k}}(p)-f(p)|\leq \varepsilon$ on $\Omega(\varepsilon,\lambda)$ for any $k\geq N(\varepsilon,\lambda)$ and any $p\in S$, which further implies that $\xi_{k}:=\bigvee\{|f_{n_{k}}(p)-f(p)|:p\in S\}\leq \varepsilon$ on $\Omega(\varepsilon,\lambda)$ whenever $k\geq N(\varepsilon,\lambda)$ (please note: once we can prove that $f$ is uniformly continuous from $(S,\mathcal{U}_{\varepsilon,\lambda})$ to $(L^{0}(\mathcal{F}),\mathcal{U}_{\varepsilon,\lambda})$, then $\xi_{k}$ is exactly $\|f_{n_{k}}-f\|$ for any $k\in \mathbb{N}$). Thus, we have proved that $\{\xi_{k},k\in \mathbb{N}\}$ almost uniformly converges to $0$. Finally, we prove that $f$ is uniformly continuous from $(S,\mathcal{U}_{\varepsilon,\lambda})$ to $(L^{0}(\mathcal{F}),\mathcal{U}_{\varepsilon,\lambda})$ as follows.
	
	Since  $\{\xi_{k},k\in \mathbb{N}\}$ almost uniformly converges to $0$, it, of course, also converges in probability measure $\mu$ to $0$, then for any given $\varepsilon_{1}>0$ and $0<\lambda_{1}<1$ there exists  $k_{0}:=N(\varepsilon_{1},\lambda_{1})\in \mathbb{N}$ such that $\mu\{\omega\in \Omega:\xi_{k}(\omega)<\frac{\varepsilon_{1}}{4}\}>1-\frac{\lambda_{1}}{2}$ when $k\geq k_{0}$. Again since $f_{n_{k_{0}}}$ is uniformly continuous from $(S,\mathcal{U}_{\varepsilon,\lambda})$ to $(L^{0}(\mathcal{F}),\mathcal{U}_{\varepsilon,\lambda})$, there exist $\varepsilon_{2}>0$ and $0<\lambda_{2}<1$ such that $\mu\{\omega\in \Omega:|f_{n_{k_{0}}}(p)-f_{n_{k_{0}}}(q)|(\omega)<\frac{\varepsilon_{1}}{2}\}>1-\frac{\lambda_{1}}{2}$ whenever $\mu\{\omega\in \Omega:d(p,q)(\omega)<\varepsilon_{2}\}>1-\lambda_{2}$. Since $|f(p)-f(q)|\leq |f(p)-f_{n_{k_{0}}}(p)|+|f_{n_{k_{0}}}(p)-f_{n_{k_{0}}}(q)|+|f_{n_{k_{0}}}(q)-f(q)|\leq|f_{n_{k_{0}}}(p)-f_{n_{k_{0}}}(q)|+2\xi_{k_{0}} $ for any $(p,q)\in S\times S$, then $\mu\{\omega\in \Omega:|f(p)-f(q)|(\omega)<\varepsilon_{1}\}\geq \mu\{\omega\in \Omega:|f_{n_{k_{0}}}(p)-f_{n_{k_{0}}}(q)|(\omega)<\frac{\varepsilon_{1}}{2}\}+\mu\{\omega\in \Omega:\xi_{k_{0}}(\omega)<\frac{\varepsilon_{1}}{4}\}-1>2(1-\frac{\lambda_{1}}{2})-1=1-\lambda_{1}$ whenever $\mu\{\omega\in \Omega:d(p,q)(\omega)<\varepsilon_{2}\}>1-\lambda_{2}$. Thus, $f$ is uniformly continuous, namely $f\in \mathcal{U}_{b}(S,L^{0}(\mathcal{F}))$, at this time $\{f_{n_{k}},k\in \mathbb{N}\}$ also converges in $\mathcal{T}_{\varepsilon,\lambda}$ to $f$ since $\{\xi_{k}=\|f_{n_{k}}-f\|,k\in \mathbb{N}\}$ converges in probability measure $\mu$ to $0$, then $\{f_{n},n\in \mathbb{N}\}$ also converges in $\mathcal{T}_{\varepsilon,\lambda}$ to $f$ since it is $\mathcal{T}_{\varepsilon,\lambda}$-Cauchy.
	
	(2) When the probability space $(\Omega,\mathcal{F},\mu)$ is trivial, namely $\mathcal{F}=\{\Omega,\emptyset\}$, the $RM$ space $(S,d)$ reduces to an ordinary metric space, $\mathcal{U}_{b}(S,L^{0}(\mathcal{F}))$ to the Banach space of bounded real-valued uniformly continuous functions on $S$, and (2) to \cite[Embedding Lemma 3.23, p.84]{AB}. The proof of (2) in the general case is completely similar to the proof of Embedding Lemma 3.23 of \cite{AB}, so is omitted.
\end{proof}

\begin{rem}\label{rem.2.7}
	Let $(S,d)$ be an RM space with base $(\Omega,\mathcal{F},\mu)$ and $C_{b}(S,L^0(\mathcal{F}))$ be the set of all the continuous mappings $f$ from $(S,\mathcal{T}_{\varepsilon,\lambda})$ to $(L^0(\mathcal{F}),\mathcal{T}_{\varepsilon,\lambda} )$ such that $\bigvee\{|f(p)|:p\in S\}\in L^0_{+}(\mathcal{F})$, then, similarly to (1) of Theorem \ref{thm.2.6}, one can see that $(C_{b}(S,L^0(\mathcal{F})), \| \cdot\|)$ becomes a $\mathcal{T}_{\varepsilon,\lambda}$-complete RN module over $\mathbb{R}$ with base $(\Omega,\mathcal{F},\mu)$ and $\mathcal{U}_{b}(S,L^0(\mathcal{F}))$ is a $\mathcal{T}_{\varepsilon,\lambda}$-closed submodule of $C_{b}(S,L^0(\mathcal{F}))$.
\end{rem}

\begin{rem}\label{rem.2.8}
	Let $(S,d)$ be an RM space with base $(\Omega,\mathcal{F},\mu)$. Arbitrarily choose a countable partition $\{A_{i},i\in \mathbb{N}\}$ of $\Omega$ to $\mathcal{F}$ such that $0<\mu(A_i)<+\infty$ for each $i\in \mathbb{N}$, then it is easy to see that the $(\varepsilon,\lambda)$-uniformity
	$\mathcal{U}_{\varepsilon,\lambda}$ on $(S,d)$ is generated by the metric $D$ on $S$ defined by $D(x,y)=\sum_{i\geq 1}\frac{1}{2^i\mu(A_i)}\int_{A_i}(d(x,y)\wedge 1)d\mu$ for any $x$ and $y$ in $S$. Similarly, when $(S,\| \cdot\|)$ is an RN space over $\mathbb{K}$ with base $(\Omega,\mathcal{F},\mu)$, its metrizable $(\varepsilon,\lambda)$-linear topology $\mathcal{T}_{\varepsilon,\lambda}$ can be generated by the quasinorm $\normmm{\cdot}$ on $S$ defined by $\normmm{x}=\sum_{i\geq 1}\frac{1}{2^i\mu(A_i)}\int_{A_i}(\|x\|\wedge 1)d\mu$ for each $x\in S$.
\end{rem}

Since $L^0(\mathcal{F})$ is both $\mathcal{T}_{\varepsilon,\lambda}$-complete and $\mathcal{T}_{c}$-complete as an RN module, just as every metric (or, normed) space admits a completion, the completion proposition below is also obvious.

\begin{prop}\label{pro.2.9}
	For every RM space $(S,d)$ with base $(\Omega,\mathcal{F},\mu)$, there exists an $(\varepsilon,\lambda)$-complete RM space $(\tilde{S},\tilde{d})$ with base $(\Omega,\mathcal{F},\mu)$ and a random-metric-preserving embedding $\varphi$ from $S$ to $\tilde{S}$ such that $\varphi(S)$ is $\mathcal{T}_{\varepsilon,\lambda}$-dense in $\tilde{S}$, such $(\tilde{S},\tilde{d})$ is unique in the sense of an isometric isomorphism with respect to random metric, called the completion of $(S,d)$, and $\tilde{d}$ is still denoted by $d$. For every RN space (module) $(S,\|\cdot\|)$ over $\mathbb{K}$ with base $(\Omega,\mathcal{F},\mu)$, there exists a $\mathcal{T}_{\varepsilon,\lambda}$-complete RN space (module)
	$(\tilde{S},\| \cdot\|^{\tilde{}})$ and a random-norm-preserving embedding $\varphi$ from $S$ into $\tilde{S}$ such that $\varphi$ is a linear operator (or a module homomorphism) and $\varphi(S)$ is $\mathcal{T}_{\varepsilon,\lambda}$-dense in $\tilde{S}$, such $(\tilde{S},\| \cdot\|^{\tilde{}})$ is unique in the sense of isometric isomorphism, called the completion of $(S,\|\cdot\|)$ and $\| \cdot\|^{\tilde{}}$ is still denoted by $\|\cdot\|$.
\end{prop}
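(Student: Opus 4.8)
The plan is to handle the two halves of the proposition in tandem, reducing the random-metric case to Theorem \ref{thm.2.6} and the random-normed case to a transport of algebraic structure across the same completion. For the $RM$ space $(S,d)$, I would exploit the embedding already in hand: by Theorem \ref{thm.2.6} the map $T(p)=d(p,\cdot)-d(p_{0},\cdot)$ is an isometric embedding of $(S,d)$ into the $\mathcal{T}_{\varepsilon,\lambda}$-complete $RN$ module $(\mathcal{U}_{b}(S,L^{0}(\mathcal{F})),\|\cdot\|)$. I would then simply set $\tilde{S}$ to be the $\mathcal{T}_{\varepsilon,\lambda}$-closure of $T(S)$ inside $\mathcal{U}_{b}(S,L^{0}(\mathcal{F}))$, equipped with $\tilde{d}(f,g)=\|f-g\|$. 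A $\mathcal{T}_{\varepsilon,\lambda}$-closed subset of a $\mathcal{T}_{\varepsilon,\lambda}$-complete space is itself complete, so $(\tilde{S},\tilde{d})$ is $(\varepsilon,\lambda)$-complete; $T(S)$ is dense by construction; and $\varphi:=T$ is random-metric-preserving by Theorem \ref{thm.2.6}(2). The random-metric axioms for $\tilde{d}$ are inherited directly from the random norm $\|\cdot\|$, so no separate verification is needed.

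For the $RN$ space (module) $(S,\|\cdot\|)$ the embedding $T$ is not linear, so instead I would run the classical metric completion on the underlying uniform space and carry the algebraic data over. By Remark \ref{rem.2.8} the topology $\mathcal{T}_{\varepsilon,\lambda}$ is generated by the translation-invariant metric $D(x,y)=\normmm{x-y}$, so $(S,D)$ is an ordinary metric space whose completion $\tilde{S}$ — concretely realized as equivalence classes of $D$-Cauchy sequences — is available with a dense embedding $\varphi$. Since $\normmm{z}=0$ iff $\|z\|=0$, separation for $D$ matches separation for the norm, so distinct points of $\tilde{S}$ will carry nonzero extended norm. The operations extend by density and continuity: addition is uniformly continuous, hence extends to $\tilde{S}$, and for each fixed $\xi\in L^{0}(\mathcal{F},\mathbb{K})$ the map $x\mapsto\xi x$ is $\mathcal{T}_{\varepsilon,\lambda}$-continuous (the topological-module property recorded after Definition \ref{def.2.4}), so it extends as well; this equips $\tilde{S}$ with a linear (respectively $L^{0}(\mathcal{F},\mathbb{K})$-module) structure for which $\varphi$ is linear (a module homomorphism). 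I would define $\|x\|^{\tilde{}}$ for $x\in\tilde{S}$ as the $\mathcal{T}_{\varepsilon,\lambda}$-limit of $\{\|x_{n}\|\}$ for any sequence with $\varphi(x_{n})\to x$; this limit exists and lies in $L_{+}^{0}(\mathcal{F})$ because $|\,\|x_{n}\|-\|x_{m}\|\,|\le\|x_{n}-x_{m}\|$ makes $\{\|x_{n}\|\}$ a $\mathcal{T}_{\varepsilon,\lambda}$-Cauchy sequence in the complete module $L^{0}(\mathcal{F})$, and nonnegativity is preserved under $\mathcal{T}_{\varepsilon,\lambda}$-limits (a convergent sequence admits an a.e.-convergent subsequence). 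Axioms (RN-1)--(RN-3) then pass to the limit, and $\tilde{d}(x,y)=\|x-y\|^{\tilde{}}$ holds by construction.

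For uniqueness in both cases I would use the standard extension principle for uniformly continuous maps: given two completions $(\tilde{S}_{1},\varphi_{1})$ and $(\tilde{S}_{2},\varphi_{2})$, the composite $\varphi_{2}\circ\varphi_{1}^{-1}$ is a random-metric-preserving bijection between the dense images, hence $\mathcal{U}_{\varepsilon,\lambda}$-uniformly continuous with uniformly continuous inverse; since both targets are complete, it extends uniquely to a random-metric- (respectively random-norm-) preserving isomorphism $\tilde{S}_{1}\to\tilde{S}_{2}$. In the normed case the extension is automatically linear (a module homomorphism), as it agrees with a linear map on a dense submodule and the operations are continuous.

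I expect the only genuine technical point to be the extension of the module multiplication together with the verification that (RNM-1) survives passage to the limit: joint continuity of $\cdot:L^{0}(\mathcal{F},\mathbb{K})\times S\to S$ is weaker than uniform continuity, so the extension should be argued one fixed scalar at a time (continuity at $\theta$ suffices there), after which the identity $\|\xi x\|^{\tilde{}}=|\xi|\,\|x\|^{\tilde{}}$ is recovered by taking $\mathcal{T}_{\varepsilon,\lambda}$-limits along an approximating sequence and using that multiplication by $|\xi|$ is $\mathcal{T}_{\varepsilon,\lambda}$-continuous on $L^{0}(\mathcal{F})$. Everything else reduces to the classical metric and normed completion arguments, which is why the statement is fairly described as obvious once Theorem \ref{thm.2.6} and Remark \ref{rem.2.8} are available.
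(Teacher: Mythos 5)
Your proposal is correct. The paper in fact offers no proof at all: it declares the proposition ``obvious'' on the strength of the classical completion of a metric (or normed) space, using that $\mathcal{T}_{\varepsilon,\lambda}$ is metrizable and that $L^{0}(\mathcal{F})$ is $\mathcal{T}_{\varepsilon,\lambda}$-complete. Your treatment of the $RN$ case is exactly that implicit argument, written out: complete the translation-invariant metric $D(x,y)=\normmm{x-y}$ of Remark \ref{rem.2.8}, extend the operations by (uniform) continuity one fixed scalar at a time, and define $\|\cdot\|^{\tilde{}}$ as a $\mathcal{T}_{\varepsilon,\lambda}$-limit using $\bigl|\,\|x_{n}\|-\|x_{m}\|\,\bigr|\leq\|x_{n}-x_{m}\|$; all the points you flag (well-definedness of the extended norm, survival of (RNM-1) in the limit, uniqueness via extension of uniformly continuous maps) are handled correctly. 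Your $RM$ half takes a mildly different route from the one the paper gestures at: instead of completing the metrized uniformity by Cauchy sequences and then checking that $d$ extends (which would require the uniform continuity of $d:S\times S\to L^{0}(\mathcal{F})$ coming from $|d(x,y)-d(x',y')|\leq d(x,x')+d(y,y')$), you take the $\mathcal{T}_{\varepsilon,\lambda}$-closure of the isometric image $T(S)$ inside the complete module $\mathcal{U}_{b}(S,L^{0}(\mathcal{F}))$ of Theorem \ref{thm.2.6}. This is legitimate (Theorem \ref{thm.2.6} precedes the proposition and does not depend on it) and buys you the random-metric axioms for free from the $L^{0}$-norm, at the cost of making the $RM$ completion depend on the embedding theorem rather than being self-contained. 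Either way the content is right and there is no gap.
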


Definition \ref{def.2.10} below is taken from \cite[Definition 3.1]{Guo3}.

\begin{defn}\label{def.2.10}
	Let $S$ be an $L^0(\mathcal{F},\mathbb{K})$-module and $G$ be a nonempty subset of $S$. $G$ is said to be stable if $I_{a}x+I_{a^c}y\in G$ for any $x$ and $y$ in $G$ and any $a\in B_{\mathcal{F}}$ (let us recall: for any $a=[A]$ with $A\in \mathcal{F}$, $I_{a}$ stands for $\tilde{I}_{A}$ , and $I_{a^c}=\tilde{I}_{A^c}$, where $A^c=\Omega\backslash A$). $G$ is said to be $\sigma$-stable (or to have the countable concatenation property in terms of \cite{Guo3}) if, for any sequence $\{x_n,n\in \mathbb{N}\}$ in $G$ and any partition $\{a_n,n\in \mathbb{N}\}$ of unity in $B_{\mathcal{F}}$, there is $x$ in $G$ such that $I_{a_n}x=I_{a_n}x_n$ for each $n\in \mathbb{N}$ ($x$ is unique since $S$ is assumed to be regular, usually denoted by $\sum_{n}I_{a_n}x_n$, called the countable concatenation of $\{x_n,n\in \mathbb{N}\}$ along $\{a_n,n\in \mathbb{N}\}$). By the way, if $G$ is $\sigma$-stable and  $H$ is a nonempty subset of $G$, then $\sigma(H):=\{\sum_{n}I_{a_n}h_n: \{h_n,n\in \mathbb{N}\} ~$is a sequence in$~ H ~$and$~ \{a_n,n\in \mathbb{N}\} ~$is a partition of unity in $B_{\mathcal{F}}\}$ is called the $\sigma$-stable hull of $H$.
\end{defn}

It is obvious that any $L^0(\mathcal{F},\mathbb{K})$-module $S$ and any $L^0$-convex subset of $S$ (see Section 3 below for the notion of an $L^0$-convex set) are stable. Following are some important examples of $\sigma$-stable sets.

\begin{ex}\label{exm.2.11}
	Let $(S,\mathcal{P})$ be a random locally convex module over $\mathbb{K}$ with base $(\Omega,\mathcal{F},\mu)$, namely $\mathcal{P}$ is a family of $L^0$-seminorms on $S$ such that $\bigvee\{\|x\|: \|\cdot\| \in \mathcal{P}\}=0$ iff $x=\theta$. Let $\mathcal{P}_{f}$ be the family of finite subsets of $\mathcal{P}$ and $\| \cdot\|_{Q}$ stand for the $L^0$-seminorm defined by $\| x\|_{Q}=\bigvee\{\|x\|: \|\cdot\| \in Q\}$ for any $x\in S$, where $Q\in \mathcal{P}_{f}$. Given $\varepsilon>0$, $0<\lambda<1$ and $Q\in \mathcal{P}_{f}$, let $U_{\theta}(Q,\varepsilon,\lambda)=\{x\in S: P_{\mu}\{\omega\in \Omega: \| x\|_{Q}<\varepsilon \}>1-\lambda \}$. Then $\mathcal{U}_{\theta}=\{U_{\theta}(Q,\varepsilon,\lambda): Q\in \mathcal{P}_{f},\varepsilon>0,0<\lambda<1\}$ forms a local base for some Hausdorff linear topology on $S$, called the $(\varepsilon,\lambda)$-topology for $S$ and denoted by $\mathcal{T}_{\varepsilon,\lambda}$. Furthermore, $(S,\mathcal{T}_{\varepsilon,\lambda})$ is a topological module over the topological algebra $(L^0(\mathcal{F},\mathbb{K}),\mathcal{T}_{\varepsilon,\lambda})$. Given $Q\in \mathcal{P}_{f}$ and $\varepsilon \in L^0_{++}(\mathcal{F})$, let $V_{\theta}(Q,\varepsilon)=\{x\in S: \| x\|_{Q}<\varepsilon ~$on$~\Omega\}$. Then $\mathcal{V}_{\theta}=\{V_{\theta}(Q,\varepsilon):Q\in \mathcal{P}_{f} ~$and$~\varepsilon \in L^0_{++}(\mathcal{F})\}$ forms a locally base for some Hausdorff locally $L^0$-convex topology on $S$ in the sense of \cite{FKV}, denoted by $\mathcal{T}_{c}$, moreover, $(S,\mathcal{T}_{c})$ is a topological module over the topological ring $(L^0(\mathcal{F},\mathbb{K}),\mathcal{T}_{c})$. Although $\mathcal{T}_{c}$ is much stronger that $\mathcal{T}_{\varepsilon,\lambda}$, it is proved in \cite{Guo3} that $G^{-}_{c}=G^{-}_{\varepsilon,\lambda}$ for any $\sigma$-stable subset $G$ of $S$, where $G^{-}_{c}$ and $G^{-}_{\varepsilon,\lambda}$ stand for the $\mathcal{T}_{c}$-closure and $\mathcal{T}_{\varepsilon,\lambda}$-closure of $G$, respectively. If $(S,\mathcal{P})$ is $\mathcal{T}_{\varepsilon,\lambda}$-complete, then $S$ is $\sigma$-stable: for any sequence $\{x_n,n\in \mathbb{N}\}$ and a partition $\{a_n,n\in \mathbb{N}\}$ of unity in $B_{\mathcal{F}}$, it is easy to check that $\{\sum_{k=1}^{n}I_{a_k}x_k,n\in \mathbb{N} \}$ is $\mathcal{T}_{\varepsilon,\lambda}$-Cauchy, and hence convergent to some $x\in S$, then $x$ just satisfies $I_{a_n}x=I_{a_n}x_n$ for each $n\in \mathbb{N}$. Similarly, a $\mathcal{T}_{\varepsilon,\lambda}$-complete stable subset of $S$ is also $\sigma$-stable. If $(S,\mathcal{P})$ is $\mathcal{T}_{c}$-complete, $S$ is, however, not necessarily $\sigma$-stable. It is interesting that, as Theorem 3.18 of \cite{Guo3} shows, $S$ is
	$\mathcal{T}_{\varepsilon,\lambda}$-complete iff $S$ is both $\mathcal{T}_{c}$-complete and  $\sigma$-stable. Just as pointed out in \cite{GWCXY}, for a stable subset $G$ of $S$, we also have that $G$ is $\mathcal{T}_{\varepsilon,\lambda}$-complete iff $G$ is both $\mathcal{T}_{c}$-complete and  $\sigma$-stable.
\end{ex}

\begin{ex}\label{exm.2.12}
	Let $(S,\|\cdot\|)$  be an RN space over $\mathbb{K}$ with base $(\Omega,\mathcal{F},\mu)$. A linear operator $f:S\rightarrow L^0(\mathcal{F},\mathbb{K})$ is called an a.e. bounded random linear functional on $S$ if there exists some $\xi \in L^0_{+}(\mathcal{F})$ such that $|f(x)|\leq \xi \|x\|$ for any $x\in S$. Denote by $S^*$ the set of a.e. bounded random linear functionals on $S$, $S^*$ is, clearly, an $L^0(\mathcal{F},\mathbb{K})$-module endowed with the module multiplication $(\xi f)(x)=\xi \cdot (f(x))$ for any $\xi \in L^0(\mathcal{F})$, any $f\in S^*$ and any $x\in S$, further $S^*$ automatically forms an RN module over  $\mathbb{K}$ with base $(\Omega,\mathcal{F},\mu)$ when it is endowed with the $L^0$-norm $\|f\|=\bigwedge\{\xi \in L^0_{+}(\mathcal{F}):\xi ~$is such that$~  |f(x)|\leq \xi \|x\|~$ for any$~ x\in S\}$ for any $f\in S^*$, called the random conjugate space of $(S,\|\cdot\|)$. The notion of an a.e. bounded random linear functional was introduced in \cite{Guo6a}, where the Hahn-Banach theorem was also proved: let $M\subset S$ be a subspace and $f:M\rightarrow L^{0}(\mathcal{F},\mathbb{K})$ be an a.e. bounded random linear functional, then there exists $F\in S^{*}$ such that $F|_{M}=f$ and $\|F\|=\|f\|$, see e.g. \cite{Guo3,Guo7} for details. A basic fact is that $f\in S^*$ iff $f$ is a continuous module homomorphism from $(S,\mathcal{T}_{\varepsilon,\lambda})$ to $(L^0(\mathcal{F},\mathbb{K}),\mathcal{T}_{\varepsilon,\lambda})$ when $S$ is an RN module, and at this time $\|f\|=\bigvee\{|f(x)|:x\in S(1) \}$, where $S(1)=\{x\in S: \|x\|\leq 1\}$. These observations are our initial motivation of introducing the notion of an RN module, see \cite{Guo6aa,Guo1,Guo2} for details. Similarly, for a random locally convex module $(S,\mathcal{P})$ over $\mathbb{K}$ with base $(\Omega,\mathcal{F},\mu)$, $S^*_{\varepsilon,\lambda}:=$ the $L^0(\mathcal{F},\mathbb{K})$-module of continuous module  homomorphisms from $(S,\mathcal{T}_{\varepsilon,\lambda})$ to $(L^0(\mathcal{F},\mathbb{K}),\mathcal{T}_{\varepsilon,\lambda})$, is called the random conjugate space of $(S,\mathcal{P})$ under $\mathcal{T}_{\varepsilon,\lambda}$, while $S^*_{c}:=$ the $L^0(\mathcal{F},\mathbb{K})$-module of continuous module  homomorphisms from $(S,\mathcal{T}_{c})$ to $(L^0(\mathcal{F},\mathbb{K}),\mathcal{T}_{c})$, is called the random conjugate space of $(S,\mathcal{P})$ under $\mathcal{T}_{c}$. It is proved in \cite{Guo3} that $S^*_{c}\subset S^*_{\varepsilon,\lambda}$ for any random locally convex module $(S,\mathcal{P})$, in particular $S^*_{c}= S^*_{\varepsilon,\lambda}$ when $S$ is an RN module. It is also observed that $S^*_{\varepsilon,\lambda}$ is always $\sigma$-stable, but $S^*_{c}$ is not necessarily $\sigma$-stable for a random locally convex module $(S,\mathcal{P})$, in particular it is subsequently found in \cite{GZZ1} that $S^*_{\varepsilon,\lambda}=\sigma(S^*_{c})$ for any random locally convex module $(S,\mathcal{P})$.
\end{ex}

\begin{ex}\label{exm.2.13}
	Let $(\Omega,\mathcal{E},P)$ be a probability space, $\mathcal{F}$ a $\sigma$-subalgebra of $\mathcal{E}$, $L^p(\mathcal{E})$ the classical $L^p$-space for any $p\in [1,+\infty]$ and $L^p_{\mathcal{F}}(\mathcal{E})$ the $L^0(\mathcal{F})$-module generated by $L^p(\mathcal{E})$. Then $L^p_{\mathcal{F}}(\mathcal{E})$ is a $\mathcal{T}_{\varepsilon,\lambda}$-complete RN module over $\mathbb{R}$ with base $(\Omega,\mathcal{F},P)$ and hence also $\sigma$-stable when it is endowed with the $L^0$-norm $\normmm{\cdot}_{p}$ (also called the condition $L^p$-norm) defined as follows, for any $x\in L^p_{\mathcal{F}}(\mathcal{E})$,
	$$ \normmm{x}_{p}=\left\{
	\begin{aligned}
		& E[|x|^p|\mathcal{F}]^{\frac{1}{p}}, & when~ p \in [1,+\infty); \\
		& \bigwedge\{\xi\in L^0_{+}(\mathcal{F}):|x|\leq \xi \}  , & when~ p=+\infty.
	\end{aligned}
	\right.
	$$
	The works \cite{FKV} and \cite{GZZ2} have showed that $L^p_{\mathcal{F}}(\mathcal{E})$ can be chosen as the model space universally suitable for the study of condition risk measures.
\end{ex}

The three examples have exhibited the fundamental importance of the notion of $\sigma$-stability in random functional analysis and its applications, we naturally wish a notion similar to $\sigma$-stability for a nonempty subset in an RM space, it is easy to observe that for a sequence $\{x_n,n\in \mathbb{N}\}$ in an RN module $(S,\|\cdot\|)$ over $\mathbb{K}$ with base $(\Omega,\mathcal{F},\mu)$, a partition $\{a_n,n\in \mathbb{N}\}$ of unity in $B_{\mathcal{F}}$ and an element $x$ in $S$, $I_{a_n}x=I_{a_n}x_n$ for each $n\in \mathbb{N}$ iff $I_{a_n}\|x-x_n\|=0$ for each $n\in \mathbb{N}$, which leads the authors in \cite{JKZ} and \cite{GWYZ} to the following notion of $d$-$\sigma$-stability (the original manuscript of \cite{JKZ} was earlier announced in arXiv than that of \cite{GWYZ}).

\begin{defn}\label{def.2.14}
	Let $(S,d)$  be an RM space with base $(\Omega,\mathcal{F},\mu)$ and $G$ be a nonempty subset of $S$. $G$ is said to be $d$-stable if, for any $x$ and $y\in G$ and any $a\in B_{\mathcal{F}}$, there exists $z\in G$ such that $I_{a}d(z,x)=0$ and $I_{a^c}d(z,y)=0$ (such an element $z$ is unique by the triangle inequality (RM-3) and usually denoted by $I_{a}x+I_{a^c}y$). $G$ is said to be $d$-$\sigma$-stable if, for any sequence $\{x_n,n\in \mathbb{N}\}$ in $G$ and any partition $\{a_n,n\in \mathbb{N}\}$ of unity in $B_{\mathcal{F}}$, there exists $x\in G$ such that $I_{a_n}d(x,x_n)=0$ for each $n\in \mathbb{N}$ (similarly, such an element $x$ is unique and usually denoted by $\sum_{n}I_{a_n}x_n$). By the way, if $G$ is a $d$-$\sigma$-stable subset of $S$ and $H$ is a nonempty subset of $G$, $\sigma(H):=\{\sum_{n}I_{a_n}h_n: \{a_n,n\in \mathbb{N}\} ~$is a partition of unity in $B_{\mathcal{F}}$ and$~ \{h_n,n\in \mathbb{N}\} ~$is a sequence in$~H \}$ is called the $d$-$\sigma$-stable hull of $H$.
\end{defn}

Theorem 2.7 of \cite{GWYZ} shows that a nonempty subset $G$ of an RM space $(S,d)$ with base $(\Omega,\mathcal{F},\mu)$ is $d$-stable iff, for any $n\in \mathbb{N}$, any $n$-element subset $\{x_k:k=1\sim n\}$ of $G$ and any $n$-partition $\{a_k:k=1\sim n\}$ of unity in $B_{\mathcal{F}}$, there exists an unique $x$ in $G$ such that $I_{a_k}d(x,x_k)=0$ for each $k=1\sim n$. Some important examples of $d$-$\sigma$-stable sets were also provided in \cite{GWYZ}. Besides, it is also obvious that the notions of $d$-$\sigma$-stability ($d$-stability) and $\sigma$-stability (accordingly, stability) coincide for a nonempty subset of an RN module.

\begin{defn}\label{def.2.15}
	Let $(S_1,d_1)$ and $(S_2,d_2)$ be two RM spaces with base $(\Omega,\mathcal{F},\mu)$, $G_1$ and $G_2$ two nonempty subsets of $S_1$ and $S_2$, respectively, and $T:G_1\rightarrow G_2$ a mapping. $T$ is said to be stable if $G_1$ and $G_2$ are $d$-stable and $T(I_{a}x+I_{a^c}y)=I_{a}T(x)+I_{a^c}T(y)$ for any  $x$ and $y$ in $G_1$ and any $a\in B_{\mathcal{F}}$. $T$ is said to be $\sigma$-stable if $G_1$ and $G_2$ are $d$-$\sigma$-stable and $T(\sum_{n}I_{a_n}x_n)=\sum_{n}I_{a_n}T(x_n)$ for any sequence $\{x_n,n\in \mathbb{N}\}$ in $G_1$ and any partition $\{a_n,n\in \mathbb{N}\}$ of unity in $B_{\mathcal{F}}$.
\end{defn}

Lemma \ref{lem.2.16} below is very simple but it considerably simplifies and even makes the proof of the main result--Theorem \ref{thm.2.17} obvious.

\begin{lem}\label{lem.2.16}
	Let $(S_1,d_1)$, $(S_2,d_2)$, $G_1$, $G_2$ and $T$ be the same as in Definition \ref{def.2.15}. Then we have the following statements:
	\begin{itemize}
		\item [(1)] If $G_1$ and $G_2$ are $d$-stable ($d$-$\sigma$-stable) and $T$ is stable ($\sigma$-stable), then $T(G_1)$ is $d$-stable ($d$-$\sigma$-stable).
		\item [(2)]	If $G_1$ and $G_2$ are $d$-stable ($d$-$\sigma$-stable) and $T$ is $L^0$-Lipschitzian (namely, there exists $\xi\in L^0_{+}(\mathcal{F})$ such that $d_2(T(x),T(y))\leq \xi d_1(x,y)$ for any $x$ and $y$ in $G_1$), then $T$ is stable ($\sigma$-stable).
		\item [(3)]	If $G_1$ and $G_2$ are $d$-stable ($d$-$\sigma$-stable) and $T$ is a stable ($\sigma$-stable) bijective mapping from $G_1$ onto $G_2$, then $T^{-1}$ is also stable ($\sigma$-stable).
	\end{itemize}
\end{lem}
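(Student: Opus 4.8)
The plan is to treat all three statements as direct unwindings of the definitions of $d$-($\sigma$-)stability and of a ($\sigma$-)stable map, the only nontrivial ingredient being the following elementary observation: if $\{a_n,n\in\mathbb{N}\}$ is a partition of unity in $B_{\mathcal{F}}$ and $\zeta\in L_{+}^{0}(\mathcal{F})$ satisfies $I_{a_n}\zeta=0$ for every $n$, then $\zeta=0$; indeed $\zeta$ vanishes a.e.\ on each representative set $A_n$ and these sets cover $\Omega$ up to a null set. I would record this once at the start and invoke it whenever I need to pass from local identities (on each $a_n$) to a global one. Throughout, the $d$-stable versions are the special cases of the $d$-$\sigma$-stable versions obtained by taking the two-term partition $\{a,a^c\}$, so I would write the argument for $\sigma$-stability and remark that the finite case is verbatim the same.

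For statement (1), given a sequence $\{y_n\}$ in $T(G_1)$ and a partition $\{a_n\}$ of unity, I would choose preimages $y_n=T(x_n)$ with $x_n\in G_1$, form the concatenation $x:=\sum_{n}I_{a_n}x_n\in G_1$ (which exists since $G_1$ is $d$-$\sigma$-stable), and apply the $\sigma$-stability of $T$ to get $T(x)=\sum_{n}I_{a_n}T(x_n)=\sum_{n}I_{a_n}y_n$. Since $T(x)\in T(G_1)$, this exhibits the required concatenation inside $T(G_1)$, so $T(G_1)$ is $d$-$\sigma$-stable. Statement (3) is the mirror image: for $\{y_n\}$ in $G_2$ with partition $\{a_n\}$, I would set $x:=\sum_{n}I_{a_n}T^{-1}(y_n)\in G_1$ and use $\sigma$-stability of $T$ to compute $T(x)=\sum_{n}I_{a_n}y_n$; applying $T^{-1}$ then gives $T^{-1}\!\left(\sum_{n}I_{a_n}y_n\right)=x=\sum_{n}I_{a_n}T^{-1}(y_n)$, which is exactly the $\sigma$-stability of $T^{-1}$, its domain $G_2$ and codomain $G_1$ being $d$-$\sigma$-stable by hypothesis.

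Statement (2) carries the only real computation. Fix $\{x_n\}$ in $G_1$, a partition $\{a_n\}$, and set $x:=\sum_{n}I_{a_n}x_n\in G_1$ and $w:=\sum_{n}I_{a_n}T(x_n)\in G_2$ (both exist by $d$-$\sigma$-stability). The goal is $T(x)=w$, i.e.\ $d_2(T(x),w)=0$. I would estimate, for each fixed $n$, via the triangle inequality and then multiply by $I_{a_n}$:
\[
I_{a_n}\,d_2(T(x),w)\le I_{a_n}\,d_2(T(x),T(x_n))+I_{a_n}\,d_2(T(x_n),w).
\]
The second summand is $0$ by the defining property of the concatenation $w$. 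For the first, the $L^0$-Lipschitz bound gives $I_{a_n}\,d_2(T(x),T(x_n))\le \xi\,I_{a_n}\,d_1(x,x_n)$, and $I_{a_n}\,d_1(x,x_n)=0$ because $x$ is the concatenation of $\{x_n\}$ along $\{a_n\}$. Hence $I_{a_n}\,d_2(T(x),w)=0$ for every $n$, and the elementary observation recorded at the outset forces $d_2(T(x),w)=0$, i.e.\ $T(x)=w=\sum_{n}I_{a_n}T(x_n)$, proving $T$ is $\sigma$-stable.

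I expect no serious obstacle: the argument is entirely formal once one has the passage from ``$I_{a_n}\zeta=0$ for all $n$'' to ``$\zeta=0$'', together with the harmless interaction $I_{a_n}(\xi\eta)=\xi\,(I_{a_n}\eta)$ between the Lipschitz constant $\xi\in L_{+}^{0}(\mathcal{F})$ and the idempotents $I_{a_n}$, which is just commutativity in $L^{0}(\mathcal{F})$. The one point deserving care is consistency of bookkeeping—$w$ lives in $G_2$ and must be shown to equal $T(x)$ rather than merely to satisfy the same local identities—but the uniqueness clause in Definition \ref{def.2.14} makes this automatic.
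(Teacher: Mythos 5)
Your proof is correct and is exactly the definitional check the paper has in mind: the published proof simply states that the assertions "can be checked merely by definition" and omits the details, and your unwinding — including the key reduction from $I_{a_n}\zeta=0$ for all $n$ to $\zeta=0$ and the triangle-inequality computation in (2) — supplies precisely those details without any gap.
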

\begin{proof}
	These statements can be checked merely by definition, so are omitted.
\end{proof}

\begin{thm}\label{thm.2.17}
	Let $(S,d)$  be an RM space with base $(\Omega,\mathcal{F},\mu)$, $\mathcal{U}_{b}(S,L^0(\mathcal{F}))$ and $T$ the same as in Theorem \ref{thm.2.6}. Then a nonempty subset $G$ of $S$ is $d$-$\sigma$-stable ($d$-stable) iff $T(G)$ is $\sigma$-stable (stable), and hence the notion of $d$-$\sigma$-stability ($d$-stability) can be regarded a special case of that of $\sigma$-stability (stability).
\end{thm}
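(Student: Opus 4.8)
The plan is to reduce everything to a single pointwise norm identity and then read off both implications directly from the definitions, with Lemma \ref{lem.2.16} supplying the map-level language. The crucial observation is that for any $x,y\in S$ and any $a\in B_{\mathcal{F}}$ one has $\|I_{a}(T(x)-T(y))\|=I_{a}\|T(x)-T(y)\|=I_{a}d(x,y)$, where the first equality uses (RNM-1) together with $|I_{a}|=I_{a}$ and the second is the isometry $\|T(p)-T(q)\|=d(p,q)$ from Theorem \ref{thm.2.6}(2). Combining this with (RN-1) yields the equivalence $I_{a}T(x)=I_{a}T(y)\Longleftrightarrow I_{a}d(x,y)=0$, and, applied termwise along a partition $\{a_{n},n\in\mathbb{N}\}$ of unity, $I_{a_{n}}T(x)=I_{a_{n}}T(x_{n})$ for all $n$ iff $I_{a_{n}}d(x,x_{n})=0$ for all $n$. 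This single equivalence is the whole engine of the proof.

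First I would treat the $d$-$\sigma$-stable versus $\sigma$-stable case. By Theorem \ref{thm.2.6}(1) the module $\mathcal{U}_{b}(S,L^{0}(\mathcal{F}))$ is $\mathcal{T}_{\varepsilon,\lambda}$-complete, hence itself $\sigma$-stable (Example \ref{exm.2.11}), so every concatenation $\sum_{n}I_{a_{n}}T(x_{n})$ already exists and is unique in the module. For the forward implication, assume $G$ is $d$-$\sigma$-stable and take $x_{n}\in G$ with a partition $\{a_{n},n\in\mathbb{N}\}$ of unity; then $x:=\sum_{n}I_{a_{n}}x_{n}\in G$ satisfies $I_{a_{n}}d(x,x_{n})=0$, and the equivalence above gives $I_{a_{n}}T(x)=I_{a_{n}}T(x_{n})$, i.e.\ $T(x)=\sum_{n}I_{a_{n}}T(x_{n})\in T(G)$, so $T(G)$ is $\sigma$-stable. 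For the converse, assume $T(G)$ is $\sigma$-stable and take $x_{n}\in G$; the element $\sum_{n}I_{a_{n}}T(x_{n})$ lies in $T(G)$, hence equals $T(x)$ for a (unique, by injectivity of the isometric embedding) $x\in G$, and the equivalence then yields $I_{a_{n}}d(x,x_{n})=0$, so $x=\sum_{n}I_{a_{n}}x_{n}\in G$ and $G$ is $d$-$\sigma$-stable.

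The $d$-stable versus stable case is verbatim the same argument with a single $a\in B_{\mathcal{F}}$ and two elements $x,y$ in place of a sequence and a partition, again using $I_{a}T(x)=I_{a}T(y)\Longleftrightarrow I_{a}d(x,y)=0$ and the uniqueness guaranteed by regularity. Finally, the closing assertion that $d$-$\sigma$-stability is a special case of $\sigma$-stability follows by recalling that the two notions coincide for subsets of an $RN$ module: the isometry $T$ identifies $(G,d)$ with the subset $T(G)$ of $\mathcal{U}_{b}(S,L^{0}(\mathcal{F}))$, and on $T(G)$ the $d$-notion and the module notion agree.

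I expect the only point requiring care—rather than a genuine obstacle—to be the bookkeeping tying the module-level concatenation to the image of a concatenation in $S$: one must use that $\mathcal{U}_{b}(S,L^{0}(\mathcal{F}))$ is $\sigma$-stable so that $\sum_{n}I_{a_{n}}T(x_{n})$ exists, and that $T$ is injective so that recognizing this element as some $T(x)$ pins down an $x\in G$. This is exactly where Lemma \ref{lem.2.16}(2) makes the statement transparent in map-theoretic terms: an isometry is $L^{0}$-Lipschitzian, hence a $\sigma$-stable (respectively stable) map between $d$-$\sigma$-stable ($d$-stable) sets, so it automatically commutes with concatenation, and the equivalence derived above is simply the unpacked form of that commutation.
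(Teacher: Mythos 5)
Your proposal is correct and follows essentially the same route as the paper: the paper notes that $T$ is isometric, hence $L^{0}$-Lipschitzian, identifies $\sigma$-stability of $T(G)$ with $d$-$\sigma$-stability relative to the metric induced by $\|\cdot\|$, and concludes via Lemma \ref{lem.2.16}. Your pointwise identity $I_{a}T(x)=I_{a}T(y)\Longleftrightarrow I_{a}d(x,y)=0$ is just the explicit unpacking of that same argument, so the two proofs coincide in substance.
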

\begin{proof}
	By (2) of Theorem \ref{thm.2.6}, $T$ is isometric and thus also $L^0$-Lipschitzian. Further, $T(G)$ is $\sigma$-stable (stable) in $(\mathcal{U}_{b}(S,L^0(\mathcal{F})), \| \cdot\|)$ iff $T(G)$ is $d$-$\sigma$-stable ($d$-stable) with respect to the random metric induced by $ \| \cdot\|$, then the proof of the theorem follows immediately from Lemma \ref{lem.2.16}.
\end{proof}

Theorem \ref{thm.2.17} shows that the study of isometric invariants (for example, $\sigma$-stability) in random metric spaces can be converted into the study of the corresponding problems in RN modules. Following are some interesting applications of such an idea.

\begin{thm}\label{thm2.18}
	Let $(S,d)$  be an RM space with base $(\Omega,\mathcal{F},\mu)$ and $G$ be a $d$-stable subset of $S$. Then $G$ is $(\varepsilon,\lambda)$-complete iff $G$ is both $d$-$\sigma$-stable and $L^0$-complete.
\end{thm}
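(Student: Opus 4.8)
The plan is to reduce the statement to the already-recorded characterization of completeness for \emph{stable} subsets of an RN module, by transporting everything through the isometric embedding $T$ of Theorem \ref{thm.2.6}. Concretely, I would regard $G$ as sitting inside $S$ and $T(G)$ as sitting inside the $\mathcal{T}_{\varepsilon,\lambda}$-complete RN module $\mathcal{U}_{b}(S,L^{0}(\mathcal{F}))$, and then exploit that $T$ is an isometry between the two random metrics to match up all four properties appearing in the statement, so that the equivalence becomes a transcription of the RN-module result across $T$.

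First I would observe that, since $T$ satisfies $\|T(p)-T(q)\|=d(p,q)$ for all $p,q\in S$ (part (2) of Theorem \ref{thm.2.6}), the restriction $T|_{G}\colon G\to T(G)$ is an isometric bijection for the random metrics on $G$ and on $T(G)$. Because both the $(\varepsilon,\lambda)$-uniformity $\mathcal{U}_{\varepsilon,\lambda}$ and the $L^{0}$-uniformity $\mathcal{U}_{c}$ are defined purely in terms of the random metric (Proposition \ref{pro.2.2}), $T|_{G}$ is simultaneously a uniform isomorphism $(G,\mathcal{U}_{\varepsilon,\lambda})\to(T(G),\mathcal{U}_{\varepsilon,\lambda})$ and $(G,\mathcal{U}_{c})\to(T(G),\mathcal{U}_{c})$. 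Since completeness of a uniform space is preserved under uniform isomorphisms, it follows that $G$ is $(\varepsilon,\lambda)$-complete iff $T(G)$ is $\mathcal{T}_{\varepsilon,\lambda}$-complete, and that $G$ is $L^{0}$-complete iff $T(G)$ is $\mathcal{T}_{c}$-complete.

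Next I would translate the two stability hypotheses. By Theorem \ref{thm.2.17}, $G$ is $d$-stable iff $T(G)$ is stable, and $G$ is $d$-$\sigma$-stable iff $T(G)$ is $\sigma$-stable. In particular, the standing hypothesis that $G$ is $d$-stable guarantees that $T(G)$ is a stable subset of the RN module $\mathcal{U}_{b}(S,L^{0}(\mathcal{F}))$ (which is a special random locally convex module, with the single $L^{0}$-norm as its defining family), so the characterization recorded at the end of Example \ref{exm.2.11} applies: a stable subset of a random locally convex module is $\mathcal{T}_{\varepsilon,\lambda}$-complete iff it is both $\mathcal{T}_{c}$-complete and $\sigma$-stable. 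Applying this to $T(G)$ and then reading the three equivalences of the previous step backwards through $T$ yields exactly that $G$ is $(\varepsilon,\lambda)$-complete iff $G$ is both $L^{0}$-complete and $d$-$\sigma$-stable, which is the assertion.

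The bulk of the argument is thus bookkeeping, and the only point that genuinely requires attention is the clean transfer of completeness across $T$; this is immediate once one notes that $T$ preserves the random metric \emph{exactly}, so that both uniformities (not merely the topologies they induce) are carried isomorphically, and completeness passes in both directions. The real mathematical content is imported wholesale from the RN-module characterization in Example \ref{exm.2.11} (ultimately Theorem 3.18 of \cite{Guo3} together with its extension to stable subsets in \cite{GWCXY}), whose proof uses $\sigma$-stability precisely to concatenate an $(\varepsilon,\lambda)$-Cauchy sequence into an $L^{0}$-convergent one; I would invoke it through the embedding rather than reprove it here.
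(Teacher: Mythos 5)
Your proposal is correct and follows essentially the same route as the paper's own proof: the paper likewise observes that $d$-stability, $d$-$\sigma$-stability, $(\varepsilon,\lambda)$-completeness and $L^{0}$-completeness are all isometric invariants, reduces via the embedding of Theorems \ref{thm.2.6} and \ref{thm.2.17} to the case of a stable subset of an $RN$ module, and then invokes the characterization recorded at the end of Example \ref{exm.2.11}. Your write-up merely makes explicit the bookkeeping (transfer of the two uniformities and of completeness across $T$) that the paper leaves implicit.
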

\begin{proof}
	Since $d$-stability, $d$-$\sigma$-stability, $(\varepsilon,\lambda)$-completeness and $L^0$-completeness are all isometric invariants with respect to random metric, we only need to prove that the proposition holds for an RN module. Just as stated in the end of Example \ref{exm.2.11}, the proposition has been proved for a random locally convex module, it, of course holds also for an RN module.
\end{proof}

The classical Hausdorff theorem states that a nonempty subset $G$ of a metric space is sequentially compact iff $G$ is both complete and totally bounded. The theorem has been generalized to an RN module in a nontrivial manner: Theorem 2.3 of \cite{GWCXY} states that, for a $\sigma$-stable subset $G$ of an RN module, $G$ is random sequentially compact iff $G$ is both random totally bounded and  $\mathcal{T}_{\varepsilon,\lambda}$-complete. If we can give the proper notions of random sequentially compactness and random total boundedness in an RM space, then the classical Hausdorff theorem can be also generalized to RM spaces. Definition \ref{def.2.19} below will do the thing!

Let $(S,d)$ be a $d$-$\sigma$-stable RM space with base $(\Omega,\mathcal{F},\mu)$. First, let us introduce the following terminologies to make preparations for Definition \ref{def.2.19} and Theorem \ref{thm.2.21}.
\begin{itemize}
	\item [(1)] For any sequence $\{G_n,n\in \mathbb{N}\}$ of nonempty subsets of $S$ and any partition $\{a_n,n\in \mathbb{N}\}$ of unity in $B_{\mathcal{F}}$, $\sum I_{a_n}G_n$ stands for the set $\{\sum I_{a_n}x_n: x_n\in G_n ~$for each$~n\in \mathbb{N}\}$. A family $\mathcal{E}$ of nonempty subsets of $S$ is said to be $d$-$\sigma$-stable if $\sum I_{a_n}G_n$ still belongs to $\mathcal{E}$ for any sequence $\{G_n,n\in \mathbb{N}\}$ in $\mathcal{E}$ and any partition $\{a_n,n\in \mathbb{N}\}$ of unity in $B_{\mathcal{F}}$. $\{B(x,r):x\in S ~$and$~ r\in L^0_{++}(\mathcal{F})\}$ is a $d$-$\sigma$-stable family since it is easy to check $\sum_{n}I_{a_n}B(x_n,r_n)=B(\sum_{n}I_{a_n}x_n,\sum_{n}I_{a_n}r_n)$ for any sequence $\{x_n,n\in \mathbb{N}\}$ in $S$ and any sequence $\{r_n,n\in \mathbb{N}\}$ in $L^0_{++}(\mathcal{F})$. Here $B(x,r)=\{y\in S:d(x,y)<r ~$on$~\Omega\}$ for any $x$ in $S$ and $r\in L^0_{++}(\mathcal{F})$.
	\item [(2)] Let $L^0(\mathcal{F},\mathbb{N})$ be the set of equivalence classes of measurable functions from $(\Omega,\mathcal{F},\mu)$ to $\mathbb{N}$, which is, obviously, a $\sigma$-stable directed set as a subset of $(L^0(\mathcal{F}),\leq)$. A $\sigma$-stable mapping $x$ from $L^0(\mathcal{F},\mathbb{N})$ to $S$ is called a stable sequence in $S$, denoted by $\{x_n:=x(n),n\in L^0(\mathcal{F},\mathbb{N})\}$. A stable sequence $\{y_m,m\in L^0(\mathcal{F},\mathbb{N})\}$ is called a stable subsequence of a stable sequence $\{x_n,n\in L^0(\mathcal{F},\mathbb{N})\}$ if there exists a $\sigma$-stable mapping $l$ from $L^0(\mathcal{F},\mathbb{N})$ to $L^0(\mathcal{F},\mathbb{N})$ satisfying $y_m=x_{l_m}$ for each $m\in L^0(\mathcal{F},\mathbb{N})$ (where $l_m=l(m)$) at the same time $\{y_m,m\in L^0(\mathcal{F},\mathbb{N})\}$ is also a subnet of $\{x_n,n\in L^0(\mathcal{F},\mathbb{N})\}$. As usual, we also write $\{x_{l_m} ,m\in L^0(\mathcal{F},\mathbb{N})\}$ for $\{y_m,m\in L^0(\mathcal{F},\mathbb{N})\}$.
	\item [(3)] For any sequence $\{x_k,k\in \mathbb{N}\}$ in $S$ and any $n\in L^0(\mathcal{F},\mathbb{N})$, letting $x_{n}=\sum_{k}I_{[n=k]}x_k$, then $\{x_n,n\in L^0(\mathcal{F},\mathbb{N})\}$ is a stable sequence in $S$, called the stable sequence generated by $\{x_k,k\in \mathbb{N}\}$, where $[n=k]$ is the equivalence class of $(n^0=k):=\{\omega \in \Omega:n^0(\omega)=k\}$ for an arbitrarily chosen representative $n^0$ of $n$, $\{[n=k],k\in \mathbb{N}\}$ clearly forms a partition of unity in $B_{\mathcal{F}}$. Conversely, for any given stable sequence $\{x_n,n\in L^0(\mathcal{F},\mathbb{N})\}$, denoting $x_n$ by $x_k$ when $n=\tilde{I}_{\Omega} \cdot k$ for some $k\in \mathbb{N}$ (namely $n$ is the equivalence class of the constant function with value $k$ on $\Omega$), then $\{x_n,n\in L^0(\mathcal{F},\mathbb{N})\}$ is exactly generated by $\{x_k,k\in \mathbb{N}\}$.
	\item [(4)] For a sequence $\{x_n,n\in \mathbb{N}\}$ in $S$, a sequence $\{y_k,k\in \mathbb{N}\}$ in $S$ is called a random subsequence of $\{x_n,n\in \mathbb{N}\}$ if there exist a sequence $\{n_k,k\in \mathbb{N}\}$ in $L^0(\mathcal{F},\mathbb{N})$ such that $y_k=x_{n_k}$ and $n_k<n_{k+1}$ on $\Omega$ for each $k\in \mathbb{N}$. A careful reader will find that we require $n_k$ to be a positive integer-valued measurable function in the definition of a random subsequence in \cite{GWCXY} instead of an element in $L^0(\mathcal{F},\mathbb{N})$  here, but it is easy to check that the two formulations are essentially equivalent!
\end{itemize}

\begin{defn}\label{def.2.19}
	A $d$-$\sigma$-stable RM space $(S,d)$ is said to be:
	\begin{itemize}
		\item [(1)] stably compact if every $d$-$\sigma$-stable family of $d$-$\sigma$-stable $\mathcal{T}_{c}$-closed subsets of $S$ has a nonempty intersection whenever the family has finite intersection property (namely, any finite subfamily of it has a nonempty intersection).
		\item [(2)] stably sequentially compact if every stable sequence in $S$ admits a stable subsequence that is convergent in $\mathcal{T}_{c}$.
		\item [(3)] random sequentially compact if, for every sequence $\{x_n,n\in \mathbb{N}\}$ in $S$, there exists a random subsequence $\{x_{n_k},k\in \mathbb{N}\}$ of $\{x_n,n\in \mathbb{N}\}$ such that $\{x_{n_k},k\in \mathbb{N}\}$ converges in $\mathcal{T}_{\varepsilon,\lambda}$.
		\item [(4)] random totally bounded if, for any given $\varepsilon \in L^0_{++}(\mathcal{F})$, there exists a sequence $\{l_n,n\in \mathbb{N}\}$ in $\mathbb{N}$ and a partition $\{a_n,n\in \mathbb{N}\}$ of unity in $B_{\mathcal{F}}$ together with a finite subset $\{x_{nk}: k=1\sim l_n \}$ in $S$ for each $n\in \mathbb{N}$ such that $S=\sum_{n}I_{a_n}G_n$, where $G_n=\bigcup \{B(x,\varepsilon):x\in \sigma(\{x_{nk}: k=1\sim l_n\})\}$ for each $n\in \mathbb{N}$.
	\end{itemize}
\end{defn}

\begin{rem}\label{rem.2.20}
	The notion of stable compactness can also be defined in terms of $d$-$\sigma$-stable families of $d$-$\sigma$-stable $\mathcal{T}_{c}$-open subsets as in \cite{JZ}, but an advantage of (1) of Definition \ref{def.2.19} can avoid the use of the notion of a stably finite set, and as proved in \cite{GWT}, the notion of stable compactness in the sense of Definition \ref{def.2.19} is indeed equivalent to that in the sense of \cite{JZ}. Besides, (1), (2) and (4) of Definition \ref{def.2.19} generalize the corresponding notions in \cite{JZ} where $S$ is required to be a subset of an $L^0(\mathcal{F},\mathbb{K})$-module.
	
	It should be also pointed out that all the notions in Definition \ref{def.2.19} are still well defined for a  $d$-$\sigma$-stable subset $G$ of an RM space $(S,d)$ since we only need to regard $(G,d)$ as a $d$-$\sigma$-stable RM space.
\end{rem}

(1) to (3) of Theorem \ref{thm.2.21} below was first considered for a special case by Jamneshan and Zapata in \cite{JZ} before the notion of a $d$-$\sigma$-stability was presented. Let $\mathcal{M}$ be an $L^{0}(\mathcal{F},\mathbb{K})$-module and $S$ be a $\sigma$-stable subset of $\mathcal{M}$, a random metric $d:S\times S\rightarrow L_{+}^{0}(\mathcal{F})$ on $S$ is said to be $\sigma$-stable if $d(\sum_{n}I_{a_{n}}x_{n},\sum_{n}I_{a_{n}}y_{n})=\sum_{n}I_{a_{n}}d(x_{n},y_{n})$ for any sequence $\{(x_{n},y_{n}),n\in \mathbb{N}\}$ in $S\times S$ and any partition $\{a_{n},n\in \mathbb{N}\}$ of unity in $B_{\mathcal{F}}$, such an $RM$ space $(S,d)$ is called a $\sigma$-stable random metric space in \cite{JZ}. It is easy to see that a $\sigma$-stable $RM$ space $(S,d)$ must be $d$-$\sigma$-stable, an important example of a $\sigma$-stable $RM$ space is $(S,d)$, where $S$ is a $\sigma$-stable subset of an $RN$ module $(\mathcal{M},\|\cdot\|)$ and $d(x,y)=\|x-y\|$.

\begin{thm}\label{thm.2.21}
	Let $(S,d)$ be a $d$-$\sigma$-stable RM space with base $(\Omega,\mathcal{F},\mu)$. Then the following statements are equivalent:
	\begin{itemize}
		\item [(1)] $(S,d)$ is stably compact.
		\item [(2)] $(S,d)$ is stably sequentially compact.
		\item [(3)]	$(S,d)$ is random totally bounded and $L^0$-complete.
		\item [(4)]	$(S,d)$ is random sequentially compact.	
	\end{itemize}
\end{thm}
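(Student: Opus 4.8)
The plan is to reduce the entire four-way equivalence to the corresponding statement for a $\sigma$-stable subset of an $RN$ module, exploiting the isometric embedding of Theorem \ref{thm.2.6} together with Theorem \ref{thm.2.17}. Concretely, I would first apply the embedding $T:S\to \mathcal{U}_{b}(S,L^{0}(\mathcal{F}))$, which is an isometry onto its image $G:=T(S)$ with respect to the random metric; since $(S,d)$ is $d$-$\sigma$-stable, Theorem \ref{thm.2.17} guarantees that $G$ is a $\sigma$-stable subset of the $\mathcal{T}_{\varepsilon,\lambda}$-complete $RN$ module $\mathcal{U}_{b}(S,L^{0}(\mathcal{F}))$. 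Because $T$ is an isometry, it is simultaneously a homeomorphism for the $\mathcal{T}_{\varepsilon,\lambda}$-topologies and for the $\mathcal{T}_{c}$-topologies of $S$ and $G$, and it carries countable concatenations, $d$-$\sigma$-stable families, stable sequences and random subsequences bijectively between $S$ and $G$.

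The crux of the reduction is the observation that every notion occurring in Definition \ref{def.2.19}, as well as $L^{0}$-completeness, is an isometric invariant of the random metric. I would verify this case by case: stable compactness depends only on the $\mathcal{T}_{c}$-closed $d$-$\sigma$-stable subsets and the finite intersection property, all of which are preserved by the homeomorphism $T$; stable sequential compactness and random sequential compactness depend only on (stable) sequences, their (stable or random) subsequences, and convergence in $\mathcal{T}_{c}$ or $\mathcal{T}_{\varepsilon,\lambda}$; and random total boundedness is phrased purely in terms of balls $B(x,\varepsilon)$, $\sigma$-stable hulls and partitions of unity, all of which transfer under $T$ since $B(x,\varepsilon)$ maps to $B(T(x),\varepsilon)$. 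Consequently each of (1)--(4) holds for $(S,d)$ if and only if the corresponding statement holds for the $\sigma$-stable set $G$, and it suffices to establish the four-way equivalence for a $\sigma$-stable subset of an $RN$ module.

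For the $RN$-module case I would argue as follows. The equivalence of (3) and (4) is essentially already available: by Theorem \ref{thm2.18} the conjunction of $d$-$\sigma$-stability and $L^{0}$-completeness of $G$ is exactly $\mathcal{T}_{\varepsilon,\lambda}$-completeness, so (3) reads ``$G$ is random totally bounded and $\mathcal{T}_{\varepsilon,\lambda}$-complete'', which by Theorem 2.3 of \cite{GWCXY} is equivalent to the random sequential compactness in (4). The implications (1)$\Leftrightarrow$(2)$\Leftrightarrow$(3) generalize the special case treated by Jamneshan and Zapata in \cite{JZ} (and refined in \cite{GWT}); here the passage to an arbitrary $d$-$\sigma$-stable $RM$ space is automatic precisely because the embedding realizes $G$ as a $\sigma$-stable subset of an $RN$ module, which is the setting of those references.

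The main obstacle I expect is the equivalence (1)$\Leftrightarrow$(2), i.e.\ matching the finite-intersection formulation of stable compactness with stable sequential compactness; this is the stable analogue of the classical Hausdorff equivalence of compactness and sequential compactness. One direction extracts a convergent stable subsequence from an arbitrary stable sequence by building, via countable concatenation, a $d$-$\sigma$-stable family of $\mathcal{T}_{c}$-closed ``tail'' sets with the finite intersection property and invoking (1); the reverse direction reconstructs, from a family with the finite intersection property, a stable sequence whose stable cluster point lies in the intersection. Handling the bookkeeping of partitions of unity and stable subnets in these two constructions---so that the extracted objects are genuinely $\sigma$-stable and the subnet requirement in the definition of a stable subsequence is met---is where the real work lies; the remaining link (2)$\Leftrightarrow$(3) then follows by comparing stable subsequences convergent in $\mathcal{T}_{c}$ with random subsequences convergent in $\mathcal{T}_{\varepsilon,\lambda}$ and using Theorem \ref{thm2.18} once more to pass between $L^{0}$-completeness and $\mathcal{T}_{\varepsilon,\lambda}$-completeness.
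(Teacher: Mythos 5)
Your proposal follows essentially the same route as the paper: reduce all four notions to a $\sigma$-stable subset of a $\mathcal{T}_{\varepsilon,\lambda}$-complete $RN$ module via the isometric embedding of Theorems \ref{thm.2.6} and \ref{thm.2.17} (each notion being an isometric invariant of the random metric), and then settle the $RN$-module case by citing Theorem 5.10 of \cite{JZ} for $(1)\Leftrightarrow(2)$ and Theorem 2.3 of \cite{GWCXY} together with Theorem \ref{thm2.18} for $(3)\Leftrightarrow(4)$. The only step the paper makes explicit that you gloss over is that the third condition in Theorem 5.10 of \cite{JZ} is phrased in terms of $\mathcal{T}_{c}$-stable sequential completeness rather than $L^{0}$-completeness, the bridge being Theorem 5.3 of \cite{GWCXY}; your closing sketch of a from-scratch proof of $(1)\Leftrightarrow(2)$ is not needed once those references are invoked.
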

\begin{proof}
	In fact, Theorem 5.10 of \cite{JZ} shows that $(1)\Leftrightarrow (2)\Leftrightarrow (S,d)$ is random totally bounded and $\mathcal{T}_{c}$-stably sequentially complete (namely, every stable $\mathcal{T}_{c}$-Cauchy sequence in $(S,d)$ converges in $\mathcal{T}_{c}$), when $S$ is a $\sigma$-stable subset of an RN module. Recently, it is proved in Theorem 5.3 of \cite{GWCXY} that a $\sigma$-stable subset $G$ of an RN module is
	$\mathcal{T}_{c}$-stably sequentially complete  iff $G$ is $\mathcal{T}_{c}$-complete, so that $(1)\Leftrightarrow (2)\Leftrightarrow (3)$ hold for a $\sigma$-stable subset of an RN module. Further, Theorem 2.3 of \cite{GWCXY} shows that a $\sigma$-stable subset $G$ of an RN module is
	random sequentially compact iff $G$ is random totally bounded and $\mathcal{T}_{\varepsilon,\lambda}$-complete, so at this time $(3)\Leftrightarrow (4)$ by noticing that $G$ is $\mathcal{T}_{c}$-complete iff G is $\mathcal{T}_{\varepsilon,\lambda}$-complete. To sum up,
	we have proved the theorem when $(S,d)$ is a $\sigma$-stable subset of an $RN$ module, and hence we have completed the proof again by using  Theorem \ref{thm.2.6} and Theorem \ref{thm.2.17}.
\end{proof}

\begin{rem}\label{rem.2.22}
	Theorem \ref{thm.2.21} not only generalizes but also improves Theorem 5.10 of \cite{JZ} in that $(S,d)$ here can be assumed as a general $d$-$\sigma$-stable $RM$ space in advance and (3) of Theorem 2.21 is stated in terms of $L^{0}$-completeness (namely completeness with respect to the $L^{0}$-uniformity) instead of stably sequential completeness as in \cite{JZ} (it is obvious that stably sequential completeness appears to be weaker than $L^{0}$-completeness, although they are essentially equivalent to each other, one more easily understand the notion of $L^{0}$-completeness from a standard analytic perspective). Besides, such notions as stable compactness, stable sequential compactness, random total boundedness (also called stable total boundedness in \cite{JZ}) and $L^{0}$-completeness involved in (1) to (3) of Theorem \ref{thm.2.21} are purely relative to the $L^{0}$-uniformity, whereas the notion of random sequential compactness involved in (4) of Theorem 2.21 is purely relative to the $(\varepsilon,\lambda)$-uniformity, so results like Theorem \ref{thm.2.21} are more interesting since such results reflect the connection between the two uniformities.
\end{rem}

%###################################################################################################
%###################################################################################################
\section{On $\sigma$-stability and the gluing property}
%###################################################################################################
%###################################################################################################

The aim of this section is to prove that the gluing property of an $L^{p}(\mathcal{F})$-normed $L^{\infty}(\mathcal{F},\mathbb{K})$-module can be derived from the $\sigma$-stability of a $\mathcal{T}_{\varepsilon,\lambda}$-complete $RN$ module (see Corollary \ref{coro.3.7} below), which is based on the fact that an $L^{p}(\mathcal{F})$-normed $L^{\infty}(\mathcal{F},\mathbb{K})$-module can be exactly generated by a $\mathcal{T}_{\varepsilon,\lambda}$-complete $RN$ module, see Theorem \ref{thm.3.6} below for details. As applications, we also simplify some discussions of module dual theory developed in \cite{Gigl} by connecting the former with the theory of random conjugate spaces of $RN$ spaces, and in particular we establish the Hahn-Banach theorem and the separation theorem between a point and a  closed $L^{0}$-convex set in an $L^{p}(\mathcal{F})$-normed $L^{\infty}(\mathcal{F},\mathbb{K})$-module, see Propositions \ref{pro.3.13} and \ref{pro.3.14} below. Besides, in the end of this section we also show that Theorem 3.8 and Proposition 3.10 of \cite{LP2} can be regarded as a special case of Corollary \ref{coro.3.10} in this paper. Since $L^{p}(\mathcal{F})$-normed $L^{\infty}(\mathcal{F},\mathbb{K})$-modules were introduced in \cite{Gigl} under the general framework of $L^{\infty}(\mathcal{F},\mathbb{K})$-modules, let us first recall the relevant terminologies as follows.

$L^{p}(\mathcal{F},\mathbb{K})$ denotes the usual Banach space of equivalence classes of $p$-integrable (for $1\leq p<+\infty$) or essentially bounded (for $p=+\infty$) measurable functions from $(\Omega,\mathcal{F},\mu)$ to $\mathbb{K}$, whose $p$-norm is denoted by $|\cdot|_{p}$. It is well known that $(L^{\infty}(\mathcal{F},\mathbb{K}),|\cdot|_{\infty})$ is a Banach algebra over $\mathbb{K}$. We always assume in this section that a normed module $(\mathcal{M},\normmm{\cdot})$ over the Banach algebra $L^{\infty}(\mathcal{F},\mathbb{K})$ satisfies $\normmm{\xi x}\leq |\xi|_{\infty}\normmm{x}$ for any $\xi \in L^{\infty}(\mathcal{F},\mathbb{K})$ and any $x\in \mathcal{M}$, where we use $\normmm{\cdot}$ instead of $\|\cdot\|$ for the norm on $\mathcal{M}$ only for convenience since the notation $\|\cdot\|$ has been used for a random norm on an $RN$ space or an $L^{0}$-norm on an $RN$ module. On the other hand, such notions as $L^{\infty}(\mathcal{F})$-premodule, $L^{\infty}(\mathcal{F})$-modules and $L^{p}(\mathcal{F})$-normed $L^{\infty}(\mathcal{F},\mathbb{K})$-modules were introduced only for the real Banach algebra $L^{\infty}(\mathcal{F},\mathbb{R})$ (simply written as $L^{\infty}(\mathcal{F})$), Definitions \ref{def.3.1} and \ref{def.3.2} are essentially adopted from \cite[Chapter 1]{Gigl} merely by including the complex case.

\begin{defn}\label{def.3.1}
	A normed module $(\mathcal{M},\normmm{\cdot})$ over $(L^{\infty}(\mathcal{F},\mathbb{K}),|\cdot|_{\infty})$ (briefly, $L^{\infty}(\mathcal{F},\mathbb{K})$) is called an $L^{\infty}(\mathcal{F},\mathbb{K})$-premodule if $\mathcal{M}$ is complete (namely, a Banach space). In addition, an $L^{\infty}(\mathcal{F},\mathbb{K})$-premodule $(\mathcal{M},\normmm{\cdot})$ is called an $L^{\infty}(\mathcal{F},\mathbb{K})$-module if it satisfs the following two properties:
	\begin{itemize}
		\item [(1)] Locality. For each $x\in \mathcal{M}$ and each partition $\{a_{n},n\in \mathbb{N}\}$ of unity in $B_{\mathcal{F}}$ such that $I_{a_{n}}x=0$ for any $n\in \mathbb{N}$, then $x=0$;
		\item [(2)]	Gluing property. For each sequence $\{x_{n},n\in \mathbb{N}\}$ in $\mathcal{M}$ and each  partition $\{a_{n},n\in \mathbb{N}\}$ of unity in $B_{\mathcal{F}}$ such that $\{\sum_{i=1}^{n}I_{a_{i}}x_{i}:n\in \mathbb{N}\}$ is bounded, then there exists $x\in \mathcal{M}$ such that $I_{a_{n}}x=I_{a_{n}}x_{n}$ for each $n\in \mathbb{N}$ and $\normmm{x}\leq \varliminf\limits_{n}\normmm{\sum_{i=1}^{n}I_{a_{i}}x_{i}}$.
	\end{itemize}
\end{defn}

One can easily see that locality and gluing property in Definition \ref{def.3.1} are an equivalent formulation of those in Definition 1.2.1 of \cite{Gigl}, where the equivalent new formulation is given for a convenient connection with our previous convention from random functional analysis, for example, locality under the new formulation is very similar to regularity of an $L^{0}(\mathcal{F},\mathbb{K})$-module, and further it is also easy to observe that gluing property is very similar to $\sigma$-stability of an $RN$ module.

\begin{defn}\label{def.3.2}
	Let $1\leq p\leq +\infty$. An $L^{\infty}(\mathcal{F},\mathbb{K})$-premodule $(\mathcal{M},\normmm{\cdot})$ is called an $L^{p}(\mathcal{F})$-normed $L^{\infty}(\mathcal{F},\mathbb{K})$-premodule if there exists a mapping $\|\cdot\|$ from $\mathcal{M}$ to $L_{+}^{p}(\mathcal{F})$ (where $L_{+}^{p}(\mathcal{F})=\{\xi\in L^{p}(\mathcal{F}):\xi\geq 0\}$) such that the following two conditions are satisfied:
	\begin{itemize}
		\item [(1)] $\normmm{x}=|\|x\||_{p}$ for each $x\in \mathcal{M}$;
		\item [(2)]	$\|\xi x\|=|\xi|\|x\|$ for each $\xi\in L^{\infty}(\mathcal{F},\mathbb{K})$ and each $x\in \mathcal{M}$.
	\end{itemize}
	As usual, $\|\cdot\|$ is called the pointwise $L^{p}(\mathcal{F})$-norm (briefly, the pointwise norm) on $\mathcal{M}$, and $\normmm{\cdot}$ is simply written as $\normmm{\cdot}_{p}$.\\
	In addition, an $L^{p}(\mathcal{F})$-normed $L^{\infty}(\mathcal{F},\mathbb{K})$-premodule is called an $L^{p}(\mathcal{F})$-normed $L^{\infty}(\mathcal{F},\mathbb{K})$-module if it is also $L^{\infty}(\mathcal{F},\mathbb{K})$-module (namely, it also satisfies both locality and gluing property).
\end{defn}

\begin{rem}\label{rem.3.3}
	Let $(\mathcal{M},\normmm{\cdot}_{p})$ be an $L^{p}(\mathcal{F})$-normed $L^{\infty}(\mathcal{F},\mathbb{K})$-premodule (with the pointwise norm $\|\cdot\|$).
	\begin{itemize}
		\item [(1)] Since existence of the pointwise norm $\|\cdot\|$ is assumed, Gigli has proved in \cite[Proposition 1.2.12]{Gigl} that $\mathcal{M}$ always possesses the locality property, and that $\mathcal{M}$ also always has the gluing property when $p<+\infty$.
		\item [(2)]	While $ii)$ of \cite[Proposition 1.2.12]{Gigl} also has shown that the pointwise norm $\|\cdot\|$ satisfies (RN-3): $\|x+y\|\leq \|x\|+\|y\|$ for any $x$ and $y$ in $\mathcal{M}$, so $(\mathcal{M},\|\cdot\|)$ automatically becomes an $RN$ space over $\mathbb{K}$ with base $(\Omega,\mathcal{F},\mu)$ with the two additional properties:
		\begin{itemize}
			\item [(2.1)] $\mathcal{M}$ is a left module over the algebra $L^{\infty}(\mathcal{F},\mathbb{K})$;
			\item [(2.2)] $\|\xi x\|=|\xi|\|x\|$ for any $\xi\in L^{\infty}(\mathcal{F},\mathbb{K})$ and any $x\in \mathcal{M}$.
		\end{itemize}
		\item [(3)]	Since $(\mathcal{M},\|\cdot\|)$ has the nice properties stated above in (2), letting $\{x_{n},n\in \mathbb{N}\}$ be any given sequence in $\mathcal{M}$ and $\{a_{n},n\in \mathbb{N}\}$ be any given partition of unity in $B_{\mathcal{F}}$ such that $\{\sum_{i=1}^{n}I_{a_{i}}x_{i},n\in \mathbb{N}\}$ is bounded, then it is easy to observe that the following statements hold as Gigli has shown in $iv)$ of \cite[Proposition 1.2.12]{Gigl}:
		\begin{itemize}
			\item [(3.1)] $\{\sum_{i=1}^{n}I_{a_{i}}\|x_{i}\|:=\|\sum_{i=1}^{n}I_{a_{i}}x_{i}\|,n\in \mathbb{N}\}$ is an a.e. nondecreasing sequence in $L_{+}^{0}(\mathcal{F})$, which means that $\lim\limits_{n\rightarrow \infty}\normmm{\sum_{i=1}^{n}I_{a_{i}}x_{i}}_{p}$ always exists and is finite.
			\item [(3.2)] when $p<+\infty$, $\{\sum_{i=1}^{n}I_{a_{i}}x_{i},n\in \mathbb{N}\}$ is a $\normmm{\cdot}_{p}$-Cauchy sequence, and hence convergent to some $x$ in $\mathcal{M}$, satisfying $I_{a_{n}}x_{n}=I_{a_{n}}x$ for any $n\in \mathbb{N}$ and $\normmm{x}_{p}=\lim\limits_{n\rightarrow \infty}\normmm{\sum_{i=1}^{n}I_{a_{i}}x_{i}}_{p}$.
			\item [(3.3)] When $p=+\infty$, if there exists $x\in \mathcal{M}$ such that $I_{a_{n}}x_{n}=I_{a_{n}}x$ for each $n\in \mathbb{N}$, it also always holds that $\normmm{x}_{\infty}=\lim\limits_{n\rightarrow \infty}\normmm{\sum_{i=1}^{n}I_{a_{i}}x_{i}}_{\infty}$ although $x$ is not necessarily the $\normmm{\cdot}_{\infty}$-limit of $\{\sum_{i=1}^{n}I_{a_{i}}x_{i},n\in \mathbb{N}\}$.	
		\end{itemize}
		\item [(4)] By the above-stated analysis, the gluing property of an $L^{p}$-normed $L^{\infty}(\mathcal{F},\mathbb{K})$-module $(\mathcal{M},\normmm{\cdot}_{p})$ can be equivalently formulated as: for each sequence $\{x_{n},n\in \mathbb{N}\}$ in $\mathcal{M}$ and each partition $\{a_{n},n\in \mathbb{N}\}$ of unity in $B_{\mathcal{F}}$ such that $\{\sum_{i=1}^{n}I_{a_{i}}x_{i},n\in \mathbb{N}\}$ is bounded, then there exists $x\in \mathcal{M}$ such that $I_{a_{n}}x=I_{a_{n}}x_{n}$ for each $n\in \mathbb{N}$, since at this time it always automatically holds that $\normmm{x}_{p}\leq \varliminf\limits_{n} \normmm{\sum_{i=1}^{n}I_{a_{i}}x_{i}}_{p}$.	
	\end{itemize}
 \end{rem}

\begin{ex}\label{ex.3.4}
	Let $1\leq p\leq +\infty$ and $(S,\|\cdot\|)$ be a an $RN$ module over $\mathbb{K}$ with base $(\Omega,\mathcal{F},\mu)$, further let $L^{p}(S)=\{x\in S:\normmm{x}_{p}<+\infty\}$, then $(L^{p}(S),\normmm{\cdot}_{p})$ is a normed module over $(L^{\infty}(\mathcal{F},\mathbb{K}),|\cdot|_{\infty})$, called the one generated by $(S,\|\cdot\|)$, where $\normmm{x}_{p}=|\|x\||_{p}$ for any $x\in L^{p}(S)$; furthermore, it is also easy to see that $L^{p}(S)$ is $\mathcal{T}_{\varepsilon,\lambda}$-dense in $(S,\|\cdot\|)$. When $(S,\|\cdot\|)$ is $\mathcal{T}_{\varepsilon,\lambda}$-complete, it is very easy to check that $(L^{p}(S),\normmm{\cdot}_{p})$ is an $L^{p}(\mathcal{F})$-normed $L^{\infty}(\mathcal{F},\mathbb{K})$-module, whose gluing property is checked as follows: for each sequence $\{x_{n},n\in \mathbb{N}\}$ and each partition $\{a_{n},n\in \mathbb{N}\}$ of unity in $B_{\mathcal{F}}$ such that $\sup\{\normmm{\sum_{i=1}^{n}I_{a_{i}}x_{i}}_{p},n\in \mathbb{N}\}<+\infty$, it follows immediately from the $\sigma$-stability of $S$ that there exists $x\in S$ such that $I_{a_{n}}x=I_{a_{n}}x_{n}$ for each $n\in \mathbb{N}$, since it always holds that $\normmm{x}_{p}=\lim\limits_{n\rightarrow \infty}\normmm{\sum_{i=1}^{n}I_{a_{i}}x_{i}}_{p}<+\infty$, $x$ must belong to $L^{p}(S)$. In particular, when $p=+\infty,~\normmm{x}_{\infty}=\sup\{\normmm{\sum_{i=1}^{n}I_{a_{i}}x_{i}}_{\infty},n\in \mathbb{N}\}=\sup\{\normmm{I_{a_{n}}x_{n}}_{\infty},n\in \mathbb{N}\}$.
\end{ex}

It is well known that a $\sigma$-finite measure is always equivalent to a probability measure, thus when the problems we consider only involve the equivalence of measures, we usually assume the base space of an $RN$ module to be a probability space to emphasize the randomness of an $RN$ module. We also earlier realized that this restriction on the base space is not sufficiently enough for applications of $RN$ modules in analysis, for example, when we are forced to consider some problems involving integrals with respect to a measure, so we also developed the theory of $RN$ modules with an arbitrary or $\sigma$-finite measure space as base in \cite{Guo2,Guo5}, and in particular established the connection between the random conjugate space $S^{*}$ of an $RN$ module $(S,\|\cdot\|)$ and the conjugate space $L^{p}(S)'$ of the normed space $L^{p}(S)$ generated by $(S,\|\cdot\|)$ when $1\leq p<+\infty$, see \cite[Theorem 3.1]{Guo5} for details. In particular, we have Proposition \ref{pro.3.5} below as a special case of Theorem 3.1 of \cite{Guo5} (namely, when $(\Omega,\mathcal{F},\mu)$ is $\sigma$-finite).

\begin{prop}\label{pro.3.5}
	Let $(S,\|\cdot\|)$ be an $RN$ module over $\mathbb{K}$ with base $(\Omega,\mathcal{F},\mu)$ and $1\leq p<+\infty$, further let $(L^{p}(S),\normmm{\cdot}_{p})$ the normed space over $\mathbb{K}$ consisting of all $x\in S$ such that $\normmm{x}_{p}:=|\|x\||_{p}<+\infty$, then $(L^{q}(S^{*}),\normmm{\cdot}_{q})$ is isometrically isomorphic onto $(L^{p}(S),\normmm{\cdot}_{p})'$ under the canonical mapping $T:L^{q}(S^{*})\rightarrow L^{p}(S)'$ (denote $T(f)$ by $T_{f}$ for any $f\in L^{q}(S^{*})$) defined by $T_{f}(x)=\int_{\Omega}f(x)d\mu$ for any $x\in L^{p}(S)$, where $\frac{1}{p}+\frac{1}{q}=1$, namely $q$ is the H\"{o}lder conjugate number of $p$, and $(L^{q}(S^{*}),\normmm{\cdot}_{q})$ is the normed space generated by $(S^{*},\|\cdot\|)$ (in fact, it is also a Banach space since $(S^{*},\|\cdot\|)$ is always $\mathcal{T}_{\varepsilon,\lambda}$-complete).
\end{prop}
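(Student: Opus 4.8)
The plan is to treat this as the random-module analogue of the classical duality $(L^{p})'=L^{q}$ and to prove separately that $T$ is well defined and norm-decreasing, that it is an isometry (hence injective), and that it is onto. The first part is routine: for $f\in L^{q}(S^{*})$ and $x\in L^{p}(S)$ the defining inequality of the random conjugate norm gives $|f(x)|\leq \|f\|\,\|x\|$ pointwise, so H\"older's inequality applied to the $L^{p}$--$L^{q}$ pairing in $L^{0}(\mathcal{F})$ yields $\int_{\Omega}|f(x)|\,d\mu\leq |\|f\||_{q}\,|\|x\||_{p}=\normmm{f}_{q}\normmm{x}_{p}<+\infty$. Thus $f(x)\in L^{1}(\mathcal{F},\mathbb{K})$, the functional $T_{f}$ is well defined, bounded, with $\|T_{f}\|\leq \normmm{f}_{q}$, and $T$ is clearly $\mathbb{K}$-linear.

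For the reverse norm inequality I would first produce good test vectors. Using that $\{|f(x)|:x\in S(1)\}$ is directed upward — given $x_{1},x_{2}\in S(1)$, normalise by unimodular $L^{0}$-multipliers so that $f(x_{i})=|f(x_{i})|\geq 0$ and concatenate along $a=[\,|f(x_{1})|\geq|f(x_{2})|\,]$ to obtain an element of $S(1)$ realising $|f(x_{1})|\vee|f(x_{2})|$ — Proposition \ref{pro.1.1}(2) furnishes a nondecreasing sequence $x_{n}\in S(1)$ with $0\leq f(x_{n})\uparrow \|f\|=:g$ a.e. When $1<p<+\infty$ I would test with $z_{n}=g^{q-1}x_{n}\in S$: then $\|z_{n}\|\leq g^{q-1}$ gives $\normmm{z_{n}}_{p}\leq |g|_{q}^{q/p}$, while monotone convergence gives $T_{f}(z_{n})=\int_{\Omega}g^{q-1}f(x_{n})\,d\mu\to\int_{\Omega}g^{q}\,d\mu=|g|_{q}^{q}$, whence $\|T_{f}\|\geq |g|_{q}^{\,q-q/p}=|g|_{q}=\normmm{f}_{q}$; the case $p=1$ is handled by localising to a set of finite positive measure on which $g$ is close to its essential supremum. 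This proves $\|T_{f}\|=\normmm{f}_{q}$ and hence injectivity.

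Surjectivity is where the real work lies. Given $\phi\in L^{p}(S)'$, for each fixed $x\in L^{p}(S)$ the set function $\lambda_{x}(A):=\phi(I_{[A]}x)$ is countably additive — because $\sum_{n}I_{[A_{n}]}x$ converges to $I_{[A]}x$ in $\normmm{\cdot}_{p}$ by dominated convergence whenever $A=\bigsqcup_{n}A_{n}$ (here $p<+\infty$ is essential) — and $\mu$-absolutely continuous, since $\mu(A)=0$ forces $I_{[A]}x=0$. Radon--Nikod\'ym then yields a density $f(x):=d\lambda_{x}/d\mu\in L^{1}(\mathcal{F},\mathbb{K})$ with $\phi(x)=\int_{\Omega}f(x)\,d\mu$; testing against simple $\xi$ and passing to limits shows that $f$ is additive and $L^{\infty}(\mathcal{F},\mathbb{K})$-homogeneous on $L^{p}(S)$.

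The main obstacle is upgrading this into a genuine element of $S^{*}$ carrying the correct $L^{q}$-bound. From the estimate $\big|\int_{A}f(x)\,d\mu\big|=|\phi(I_{[A]}x)|\leq \|\phi\|\,(\int_{A}\|x\|^{p}\,d\mu)^{1/p}$ one must extract a single $g\in L^{q}_{+}(\mathcal{F})$ with $|f(x)|\leq g\,\|x\|$ a.e. for every $x$ and $|g|_{q}\leq \|\phi\|$; I expect to do this by reducing to sets of finite measure via $\sigma$-finiteness, running a maximising argument over $x\in S(1)$ localised on such sets, and assembling the local densities into a global pointwise bound through the $\sigma$-stability/gluing of the generating module. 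Once $f$ is shown $L^{0}$-bounded on the $\mathcal{T}_{\varepsilon,\lambda}$-dense submodule $L^{p}(S)$ (Example \ref{ex.3.4}), it extends uniquely by $\mathcal{T}_{\varepsilon,\lambda}$-continuity to an element of $S^{*}$ with $\|f\|=g\in L^{q}$ and $\normmm{f}_{q}\leq\|\phi\|$; combined with the isometry this gives $T_{f}=\phi$ together with $\normmm{f}_{q}=\|\phi\|$, so $T$ is the desired isometric isomorphism. Of course, as noted before the statement, this is the $\sigma$-finite specialisation of \cite[Theorem 3.1]{Guo5}, which one could instead invoke directly.
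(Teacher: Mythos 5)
Your overall strategy is sound, and it is worth noting that the paper itself offers no proof of Proposition \ref{pro.3.5}: it simply records the statement as the $\sigma$-finite special case of Theorem 3.1 of \cite{Guo5}. What you have written is essentially a reconstruction of the standard proof of that cited theorem. The well-definedness and norm estimate via the pointwise bound $|f(x)|\leq\|f\|\|x\|$ plus H\"older, the isometry via the upward-directedness of $\{|f(x)|:x\in S(1)\}$ (unimodular normalisation and concatenation along $[\,|f(x_{1})|\geq|f(x_{2})|\,]$, then Proposition \ref{pro.1.1}(2) and the test vectors $g^{q-1}x_{n}$ with monotone convergence), and the Radon--Nikod\'ym construction of $f(x)=d\lambda_{x}/d\mu$ for surjectivity are all correct and complete as written; the countable additivity of $\lambda_{x}$ indeed uses $p<+\infty$ exactly as you say.

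The one place where you have a plan rather than a proof is the extraction of $g\in L^{q}_{+}(\mathcal{F})$ with $|f(x)|\leq g\|x\|$ and $|g|_{q}\leq\|\phi\|$, and the language about ``$\sigma$-stability/gluing'' there is somewhat beside the point: what is actually needed is only finite concatenation (stability, automatic in any $L^{0}$-module) plus two concrete steps. First, set $g=\bigvee\{|f(y)|:y\in L^{p}(S),\ \|y\|\leq 1\}$; the pointwise bound $|f(x)|\leq g\|x\|$ follows by multiplying $x$ by $I_{A}\|x\|^{-1}$ on sets $A$ of finite measure where $\|x\|$ is bounded away from $0$ and $\infty$ (such sets exhaust $[\|x\|>0]$ by $\sigma$-finiteness, and $I_{[\|x\|=0]}f(x)=f(I_{[\|x\|=0]}x)=0$). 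Second, for the $L^{q}$-bound take $y_{n}$ with $0\leq f(y_{n})=:g_{n}\uparrow g$ and, for $1<p<+\infty$, test $\phi$ against $(g_{n}\wedge m)^{q-1}y_{n}\in L^{p}(S)$: H\"older gives $\int_{\Omega}(g_{n}\wedge m)^{q}\,d\mu\leq\int_{\Omega}(g_{n}\wedge m)^{q-1}g_{n}\,d\mu=\phi\bigl((g_{n}\wedge m)^{q-1}y_{n}\bigr)\leq\|\phi\|\bigl(\int_{\Omega}(g_{n}\wedge m)^{q}\,d\mu\bigr)^{1/p}$, whence $\bigl(\int_{\Omega}(g_{n}\wedge m)^{q}\,d\mu\bigr)^{1/q}\leq\|\phi\|$, and monotone convergence in $m$ and $n$ yields $|g|_{q}\leq\|\phi\|$; for $p=1$ one argues by contradiction on a finite-measure subset of $[g_{n}>\|\phi\|+\varepsilon]$. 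With $g$ in hand, $f$ is an a.e.\ bounded random linear functional on the $\mathcal{T}_{\varepsilon,\lambda}$-dense submodule $L^{p}(S)$ (Example \ref{ex.3.4}) and extends to $S^{*}$ either by continuity or by the Hahn--Banach theorem of Example \ref{exm.2.12}, exactly as you intend. So the proposal closes correctly, but this quantitative step should be written out rather than described.
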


Theorem \ref{thm.3.6} below shows that every $L^{p}(\mathcal{F})$-normed $L^{\infty}(\mathcal{F},\mathbb{K})$-module is exactly generated by a $\mathcal{T}_{\varepsilon,\lambda}$-complete $RN$ module $(S,\|\cdot\|)$ over $\mathbb{K}$ with base $(\Omega,\mathcal{F},\mu)$, that is to say, every $L^{p}(\mathcal{F})$-normed $L^{\infty}(\mathcal{F},\mathbb{K})$-module can be written as $(L^{p}(S),\normmm{\cdot}_{p})$ for some unique $\mathcal{T}_{\varepsilon,\lambda}$-complete $RN$ module $(S,\|\cdot\|)$ over $\mathbb{K}$ with base $(\Omega,\mathcal{F},\mu)$. Theorem \ref{thm.3.6} is essentially due to Gigli since he already pointed out this result in \cite[p.31]{Gigl} without a proof. Here, we give the details of the proof of Theorem \ref{thm.3.6} by using Proposition \ref{pro.3.5} and Egoroff's theorem since Theorem \ref{thm.3.6} will pave the way for this whole section!

\begin{thm}\label{thm.3.6}
	Let $(\mathcal{M},\normmm{\cdot}_{p})$ be an $L^{p}(\mathcal{F})$-normed $L^{\infty}(\mathcal{F},\mathbb{K})$-module for any given $p\in [1,+\infty]$, then there exists a $\mathcal{T}_{\varepsilon,\lambda}$-complete $RN$ module $(S,\|\cdot\|)$ over $\mathbb{K}$ with base $(\Omega,\mathcal{F},\mu)$ such that the following properties are satisfied:
	\begin{itemize}
		\item [(1)] $(\mathcal{M},\normmm{\cdot}_{p})=(L^{p}(S),\normmm{\cdot}_{p})$.
		\item [(2)] $(S,\|\cdot\|)$ is unique in the sense of isometric isomorphism of $RN$ modules.
		\item [(3)] The $L^{0}$-norm $\|\cdot\|$ on $S$ is an extension of the pointwise norm on $\mathcal{M}$ and the module multiplication on the $L^{0}(\mathcal{F},\mathbb{K})$-module $S$ is an extension of the module multiplication on the $L^{\infty}(\mathcal{F},\mathbb{K})$-module $\mathcal{M}$.
		\item [(4)]	$\mathcal{M}$ is $\mathcal{T}_{\varepsilon,\lambda}$-dense in $S$.		
	\end{itemize}
\end{thm}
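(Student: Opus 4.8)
The plan is to realize $(\mathcal{M},\normmm{\cdot}_{p})$ as the set of $L^{p}$-bounded elements inside its own $\mathcal{T}_{\varepsilon,\lambda}$-completion, carrying the module structure along. By (2) of Remark \ref{rem.3.3}, the pointwise norm already makes $(\mathcal{M},\|\cdot\|)$ an $RN$ space over $\mathbb{K}$ with base $(\Omega,\mathcal{F},\mu)$ on which $L^{\infty}(\mathcal{F},\mathbb{K})$ acts with $\|\xi x\|=|\xi|\|x\|$. First I would let $(S,\|\cdot\|)$ be the $\mathcal{T}_{\varepsilon,\lambda}$-completion of $(\mathcal{M},\|\cdot\|)$ furnished by Proposition \ref{pro.2.9}; then $S$ is a $\mathcal{T}_{\varepsilon,\lambda}$-complete $RN$ space, the $L^{0}$-norm on $S$ restricts to the pointwise norm on $\mathcal{M}$, and $\mathcal{M}$ is $\mathcal{T}_{\varepsilon,\lambda}$-dense in $S$, which already yields (4) and half of (3).

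Second, I would upgrade the $L^{\infty}$-action to an $L^{0}(\mathcal{F},\mathbb{K})$-action on $S$. For $\xi\in L^{0}(\mathcal{F},\mathbb{K})$ and $x\in\mathcal{M}$ set $\xi_{n}=I_{[|\xi|\le n]}\xi\in L^{\infty}(\mathcal{F},\mathbb{K})$; since $\|\xi_{n}x-\xi_{m}x\|=|\xi_{n}-\xi_{m}|\,\|x\|\to 0$ a.e. as $n,m\to\infty$, the sequence $\{\xi_{n}x\}$ is $\mathcal{T}_{\varepsilon,\lambda}$-Cauchy and I define $\xi x$ to be its limit in $S$; because multiplication by a fixed element of $L^{0}(\mathcal{F},\mathbb{K})$ is $\mathcal{T}_{\varepsilon,\lambda}$-continuous, this action extends to all of $S$ by density. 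The module axioms and $(RNM\text{-}1)$, namely $\|\xi x\|=|\xi|\|x\|$, then follow from the $\mathcal{T}_{\varepsilon,\lambda}$-continuity of $\|\cdot\|$ together with $|\xi_{n}|\to|\xi|$ a.e.; as $S$ is complete this makes $(S,\|\cdot\|)$ a $\mathcal{T}_{\varepsilon,\lambda}$-complete $RN$ module, completing (3). The inclusion $\mathcal{M}\subseteq L^{p}(S)$ with equality of norms is then immediate from $\|x\|\in L^{p}_{+}(\mathcal{F})$ for $x\in\mathcal{M}$.

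The heart of the argument is the reverse inclusion $L^{p}(S)\subseteq\mathcal{M}$, which gives (1). Take $x\in S$ with $\normmm{x}_{p}=|\|x\||_{p}<+\infty$ and choose $x_{n}\in\mathcal{M}$ with $x_{n}\to x$ in $\mathcal{T}_{\varepsilon,\lambda}$; passing to a subsequence I may assume $\|x_{n}-x\|\to 0$ a.e. Using a $\sigma$-finite exhaustion $\Omega=\bigcup_{m}\Lambda_{m}$ with $\mu(\Lambda_{m})<+\infty$ and Egoroff's theorem on each $\Lambda_{m}$, I would extract countably many finite-measure sets whose finite unions produce an increasing sequence $\{\Omega_{j}\}$ with $\bigvee_{j}[\Omega_{j}]=1$ and $\mu(\Omega_{j})<+\infty$, on each of which $\|x_{n}-x\|\to 0$ uniformly. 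Writing $c_{j}=[\Omega_{j}]$, the estimate $\normmm{I_{c_{j}}(x_{n}-x_{k})}_{p}\le (\mu(\Omega_{j}))^{1/p}\,\mathrm{ess\,sup}_{\Omega_{j}}(\|x_{n}-x\|+\|x_{k}-x\|)$ (and the analogous bound by $\mathrm{ess\,sup}_{\Omega_{j}}$ itself when $p=+\infty$) shows $\{I_{c_{j}}x_{n}\}_{n}$ is $\normmm{\cdot}_{p}$-Cauchy; since $\mathcal{M}$ is $\normmm{\cdot}_{p}$-complete it converges to some element of $\mathcal{M}$, which must be $I_{c_{j}}x$ because $\normmm{\cdot}_{p}$-convergence implies $\mathcal{T}_{\varepsilon,\lambda}$-convergence, so $I_{c_{j}}x\in\mathcal{M}$ for every $j$. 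Now set $a_{1}=c_{1}$ and $a_{j}=c_{j}\wedge c_{j-1}^{c}$ for $j\ge 2$; this is a partition of unity with $\sum_{i=1}^{n}I_{a_{i}}(I_{c_{i}}x)=I_{c_{n}}x$, and $\normmm{I_{c_{n}}x}_{p}\le\normmm{x}_{p}$ makes these partial sums bounded, so the gluing property of $\mathcal{M}$ (Definition \ref{def.3.1}) produces $\tilde{x}\in\mathcal{M}$ with $I_{a_{n}}\tilde{x}=I_{a_{n}}x$ for all $n$; regularity of $S$ then forces $\tilde{x}=x$, whence $x\in\mathcal{M}$.

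For uniqueness (2), any $\mathcal{T}_{\varepsilon,\lambda}$-complete $RN$ module $S'$ with $L^{p}(S')=\mathcal{M}$ has $\mathcal{M}$ as a $\mathcal{T}_{\varepsilon,\lambda}$-dense submodule by Example \ref{ex.3.4}, hence is a completion of $(\mathcal{M},\|\cdot\|)$; the uniqueness clause of Proposition \ref{pro.2.9} then yields an isometric $RN$-module isomorphism $S\cong S'$, the module structures being forced to agree by $\mathcal{T}_{\varepsilon,\lambda}$-continuity. The only genuinely delicate point is the reverse inclusion in (1): one must combine $\sigma$-finiteness with Egoroff so that the uniform convergence is realized on \emph{finite-measure} pieces, which is precisely what converts $\mathcal{T}_{\varepsilon,\lambda}$-approximation into $\normmm{\cdot}_{p}$-approximation and lets the Banach completeness and the gluing property of $\mathcal{M}$ finish the proof.
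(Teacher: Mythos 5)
Your proof is correct, and while the skeleton (completion of $(\mathcal{M},\|\cdot\|)$ via Proposition \ref{pro.2.9}, then identifying $\mathcal{M}$ with $L^{p}(S)$) matches the paper, the key step establishing $L^{p}(S)\subseteq\mathcal{M}$ is handled by a genuinely different route. The paper splits into two cases: for $p<+\infty$ it shows $\mathcal{M}$ is $\normmm{\cdot}_{p}$-dense in $L^{p}(S)$ by a duality argument --- any $F\in L^{p}(S)'$ vanishing on $\mathcal{M}$ is, via the representation $L^{p}(S)'\cong L^{q}(S^{*})$ of Proposition \ref{pro.3.5}, of the form $T_{f}$ with $\int_{A}f(x)\,d\mu=0$ for all $A\in\mathcal{F}$ and $x\in\mathcal{M}$, forcing $f=0$ --- and reserves the Egoroff-plus-gluing argument for $p=+\infty$ alone. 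You instead run a single measure-theoretic argument uniformly in $p\in[1,+\infty]$: Egoroff on a finite-measure exhaustion produces increasing finite-measure sets $\Omega_{j}$ on which $\|x_{n}-x\|\to 0$ uniformly, the factor $(\mu(\Omega_{j}))^{1/p}$ converts this into $\normmm{\cdot}_{p}$-Cauchyness of $\{I_{c_{j}}x_{n}\}_{n}$, Banach completeness of $\mathcal{M}$ gives $I_{c_{j}}x\in\mathcal{M}$, and the gluing property assembles $x$ itself. Your version is more elementary and self-contained (it does not invoke the random-conjugate-space representation theorem, which the paper only develops for $p<+\infty$ anyway), at the cost of being longer than the paper's two-line duality argument in that case; the paper's version, conversely, better advertises the theme of the section, namely that module-dual facts flow from random conjugate space theory. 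One small point worth making explicit in your uniqueness argument for (2): you should note that the pointwise norm of an $L^{p}(\mathcal{F})$-normed $L^{\infty}(\mathcal{F},\mathbb{K})$-module is uniquely determined by $\normmm{\cdot}_{p}$ and the $L^{\infty}$-action (via $\normmm{\tilde{I}_{A}x}_{p}=|\tilde{I}_{A}\|x\||_{p}$ for all $A\in\mathcal{F}$), so that any $S'$ with $L^{p}(S')=\mathcal{M}$ induces the \emph{same} random metric on $\mathcal{M}$ before the uniqueness clause of Proposition \ref{pro.2.9} can be applied; with that observation your argument for (2) is complete.
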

\begin{proof}
	Let $\|\cdot\|$ be the pointwise norm on $\mathcal{M}$. Since $(\mathcal{M},\|\cdot\|)$ is an $RN$ space over $\mathbb{K}$ with base $(\Omega,\mathcal{F},\mu)$ such that $\mathcal{M}$ is also a left module  over $L^{\infty}(\mathcal{F},\mathbb{K})$ and $\|\xi x\|=|\xi|\|x\|$ for any $\xi\in L^{\infty}(\mathcal{F},\mathbb{K})$ and any $x\in \mathcal{M}$. By Proposition \ref{pro.2.9} $(\mathcal{M},\|\cdot\|)$ has a $\mathcal{T}_{\varepsilon,\lambda}$-completion $(S,\|\cdot\|^{\tilde{}})$, Gigli \cite[p.31]{Gigl} already showed that $(S,\|\cdot\|^{\tilde{}})$ becomes an $\mathcal{T}_{\varepsilon,\lambda}$-complete $RN$ module over $\mathbb{K}$ with base $(\Omega,\mathcal{F},\mu)$ in a natural way, which clearly satisfies (2), (3) and (4) if we still denote $\|\cdot\|^{\tilde{}}$ by $\|\cdot\|$. It remains to prove (1). It is also obvious that $(\mathcal{M},\normmm{\cdot}_{p})$ is a complete subspace of $(L^{p}(S),\normmm{\cdot}_{p})$. We will prove (1) by dividing two cases according to $p<+\infty$ or $p=+\infty$ as follows.
	
	When $p<+\infty$, we only need to prove that $\mathcal{M}$ is $\normmm{\cdot}_{p}$-dense in $L^{p}(S)$. For any $F\in L^{p}(S)'=L^{q}(S^{*})$ such that $F(x)=0$ for each $x\in \mathcal{M}$, where $q\in [1,+\infty]$ is such that $\frac{1}{p}+\frac{1}{q}=1$, then by Proposition \ref{pro.3.5} there exists $f\in L^{q}(S^{*})$ such that $F=T_{f}$ so that $\int_{\Omega}f(x)d\mu=0$ for each $x\in \mathcal{M}$. Further, since $\mathcal{M}$ is a left module  over $L^{\infty}(\mathcal{F},\mathbb{K})$, $\tilde{I}_{A}x\in \mathcal{M}$ for each $x\in \mathcal{M}$ and each $A\in \mathcal{F}$, then $\int_{A}f(x)d\mu=\int_{\Omega}f(\tilde{I}_{A}x)d\mu=0$ for each $x\in \mathcal{M}$ and each $A\in \mathcal{F}$, which implies $f(x)=0$ for each $x\in \mathcal{M}$, so that $f(x)=0$ for each $x\in S$ since $\mathcal{M}$ is $\mathcal{T}_{\varepsilon,\lambda}$-dense in S and $f\in S^{*}$, namely $f=0$, in other words, $\mathcal{M}$ is $\normmm{\cdot}_{p}$-dense in $L^{p}(S)$.
	
	When $p=+\infty$, the case is slightly more complicated since we are forced to adopt a different method. For any given $y\in L^{\infty}(S)$, there exists a sequence $\{x_{n},n\in \mathbb{N}\}$ in $\mathcal{M}$ such that $\{x_{n},n\in \mathbb{N}\}$ is $\mathcal{T}_{\varepsilon,\lambda}$-convergent to $y$ since $\mathcal{M}$ is $\mathcal{T}_{\varepsilon,\lambda}$-dense in $S$ and hence also $\mathcal{T}_{\varepsilon,\lambda}$-dense in $L^{\infty}(S)$. According to the definition of $\mathcal{T}_{\varepsilon,\lambda}$, $\{\|x_{n}-y\|,n\in \mathbb{N}\}$ converges in probability measure $P_{\mu}$ to $0$, then there exists a subsequence of $\{x_{n},n\in \mathbb{N}\}$ (we still denote the subsequence by $\{x_{n},n\in \mathbb{N}\}$) such that $\{\|x_{n}-y\|,n\in \mathbb{N}\}$ is convergent almost uniformly to $0$ by Egoroff's theorem, namely, for each $k\in \mathbb{N}$, there exists $E_{k}\in \mathcal{F}$ such that $P_{\mu}(E_{k})>1-\frac{1}{k}$ and $\{\|x_{n}-y\|,n\in \mathbb{N}\}$ uniformly converges to $0$ on $E_{k}$ (namely, $\{\|\tilde{I}_{E_{k}}x_{n}-\tilde{I}_{E_{k}}y\|,n\in \mathbb{N}\}$ uniformly converges to $0$ on $\Omega$), which implies that $\{\normmm{\tilde{I}_{E_{k}}x_{n}-\tilde{I}_{E_{k}}y}_{\infty},n\in \mathbb{N}\}$ converges to $0$ for each $k\in \mathbb{N}$. Letting $E_{0}=\emptyset$ and $A_{k}=E_{k}\verb|\| (\cup_{l=1}^{k-1}E_{k})$ for each $k\in \mathbb{N}$, then $\{A_{k},k\in \mathbb{N}\}$ forms a countable partition of $\Omega$ to $\mathcal{F}$, namely, $\{a_{k},k\in \mathbb{N}\}$ is a partition of unity in $B_{\mathcal{F}}$, where $a_{k}=[A_{k}]$ (the equivalence class of $A_{k}$) for each $k\in \mathbb{N}$, it also, of course, holds that $\{\normmm{I_{a_{k}}x_{n}-I_{a_{k}}y}_{\infty},n\in \mathbb{N}\}$ converges to $0$ for each $k\in \mathbb{N}$. Since $\{I_{a_{k}}x_{n},n\in \mathbb{N}\}$ is a sequence in $\mathcal{M}$ for each given $k\in \mathbb{N}$ and $(\mathcal{M},\normmm{\cdot}_{\infty})$ is complete, then $I_{a_{k}}y\in \mathcal{M}$ for each $k\in \mathbb{N}$. Further, letting $y_{k}=I_{a_{k}}y$ for any $k\in \mathbb{N}$, then it is clear that $\sup\{\normmm{y_{k}}_{\infty}:k\in \mathbb{N}\}=\normmm{y}_{\infty}<+\infty$, then by the gluing property of $\mathcal{M}$ there exists $x\in \mathcal{M}$ such that $I_{a_{k}}y_{k}=I_{a_{k}}x$, namely $I_{a_{k}}y=I_{a_{k}}x$ for each $k\in \mathbb{N}$, which, of course, implies that $\|x-y\|=(\sum_{k=1}^{\infty}I_{a_{k}})\|x-y\|=\sum_{k=1}^{\infty}I_{a_{k}}\|x-y\|=\sum_{k=1}^{\infty}\|I_{a_{k}}x-I_{a_{k}}y\|=0$, that is to say, $y=x\in \mathcal{M}$.
\end{proof}

\begin{cor}\label{coro.3.7}
	The gluing property of an $L^{p}(\mathcal{F})$-normed $L^{\infty}(\mathcal{F},\mathbb{K})$-module $(\mathcal{M},\normmm{\cdot}_{p})$ can be derived from the $\sigma$-stability of the generating  $\mathcal{T}_{\varepsilon,\lambda}$-complete $RN$ module $(S,\|\cdot\|)$ which generates $\mathcal{M}$ (namely, $\mathcal{M}=L^{p}(S)$).
\end{cor}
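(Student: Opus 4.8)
The plan is to recognize that this corollary is essentially a repackaging of Theorem \ref{thm.3.6} together with the gluing-property verification already carried out in Example \ref{ex.3.4}, so that almost no new work is required. First I would invoke Theorem \ref{thm.3.6} to write $(\mathcal{M},\normmm{\cdot}_{p})=(L^{p}(S),\normmm{\cdot}_{p})$ for the unique $\mathcal{T}_{\varepsilon,\lambda}$-complete $RN$ module $(S,\|\cdot\|)$ that generates it. Next I would recall, as noted at the end of Example \ref{exm.2.11}, that every $\mathcal{T}_{\varepsilon,\lambda}$-complete $RN$ module is automatically $\sigma$-stable: given any sequence $\{x_{n},n\in\mathbb{N}\}$ and any partition $\{a_{n},n\in\mathbb{N}\}$ of unity in $B_{\mathcal{F}}$, the partial concatenations $\{\sum_{k=1}^{n}I_{a_{k}}x_{k},n\in\mathbb{N}\}$ form a $\mathcal{T}_{\varepsilon,\lambda}$-Cauchy sequence whose limit is the desired countable concatenation $\sum_{n}I_{a_{n}}x_{n}$.

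With the $\sigma$-stability of $S$ in hand, I would verify the gluing property of $\mathcal{M}$ directly. Start with a sequence $\{x_{n},n\in\mathbb{N}\}$ in $\mathcal{M}=L^{p}(S)$ and a partition $\{a_{n},n\in\mathbb{N}\}$ of unity in $B_{\mathcal{F}}$ such that $\{\sum_{i=1}^{n}I_{a_{i}}x_{i},n\in\mathbb{N}\}$ is $\normmm{\cdot}_{p}$-bounded. Since $S$ is $\sigma$-stable, there exists $x=\sum_{n}I_{a_{n}}x_{n}\in S$ with $I_{a_{n}}x=I_{a_{n}}x_{n}$ for every $n\in\mathbb{N}$, and this existence is handed to us for free, without any norm estimate. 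It then remains only to check that $x$ actually lies in $L^{p}(S)=\mathcal{M}$, i.e. that $\normmm{x}_{p}<+\infty$.

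For this finiteness check I would use the pointwise-norm identity $I_{a_{n}}\|x\|=\|I_{a_{n}}x\|=\|I_{a_{n}}x_{n}\|=I_{a_{n}}\|x_{n}\|$ together with $\sum_{n}I_{a_{n}}=1$, which gives $\|x\|=\sum_{n}I_{a_{n}}\|x_{n}\|$; hence $\{\|\sum_{i=1}^{n}I_{a_{i}}x_{i}\|=\sum_{i=1}^{n}I_{a_{i}}\|x_{i}\|,n\in\mathbb{N}\}$ is an a.e. nondecreasing sequence in $L_{+}^{0}(\mathcal{F})$ increasing a.e. to $\|x\|$. When $p<+\infty$ the monotone convergence theorem yields $\normmm{x}_{p}=\lim_{n}\normmm{\sum_{i=1}^{n}I_{a_{i}}x_{i}}_{p}$, and when $p=+\infty$ one has $\normmm{x}_{\infty}=\sup_{n}\normmm{\sum_{i=1}^{n}I_{a_{i}}x_{i}}_{\infty}$; in either case the boundedness hypothesis forces the right-hand side to be finite, so $x\in\mathcal{M}$, and the required inequality $\normmm{x}_{p}\leq\varliminf_{n}\normmm{\sum_{i=1}^{n}I_{a_{i}}x_{i}}_{p}$ holds automatically. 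This is precisely the computation recorded in Example \ref{ex.3.4}, so I would simply cite it. There is no genuine obstacle here: the only conceptual point worth stressing is that the purely algebraic $\sigma$-stability of the generating $RN$ module already encodes the gluing property, the apparent norm inequality in the definition of gluing being a free consequence of the pointwise-norm structure.
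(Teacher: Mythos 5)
Your proposal is correct and follows essentially the same route as the paper: the paper's proof of Corollary \ref{coro.3.7} simply cites the verification in Example \ref{ex.3.4} (resting on Theorem \ref{thm.3.6} for the identification $\mathcal{M}=L^{p}(S)$), which is exactly the computation you reproduce, including the monotone-convergence finiteness check for $p<+\infty$ and the supremum identity for $p=+\infty$.
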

\begin{proof}
	In Example \ref{ex.3.4}, we have proved that the gluing property of $L^{p}(S)$ can be derived from $\sigma$-stability of $S$, so the gluing property of $\mathcal{M}$ can be derived from $\sigma$-stability of $S$.
\end{proof}

Definition \ref{def.3.8} below of continuous module homomorphisms and in particular of wise module duals was introduced for $L^{\infty}(\mathcal{F},\mathbb{K})$-modules in \cite[Chapter 1]{Gigl}, in fact it is still well defined for any normed modules over the Banach algebra $L^{\infty}(\mathcal{F},\mathbb{K})$, where we adopt such a general definition to provide a convenience for stating the Hahn-Banach theorem related to a module dual of an $L^{p}(\mathcal{F})$-normed $L^{\infty}(\mathcal{F},\mathbb{K})$-module, see Propositions \ref{pro.3.13} and \ref{pro.3.14} below.

\begin{defn}\label{def.3.8}
	Let $(\mathcal{M},\normmm{\cdot})$ and $(\mathcal{M}_{1},\normmm{\cdot}^{1})$ be two normed modules over the Banach algebra $L^{\infty}(\mathcal{F},\mathbb{K})$. A mapping $T:\mathcal{M}\rightarrow \mathcal{M}_{1}$ is called a continuous module homomorphism if $T$ is a continuous linear operator such that $T(\xi x)=\xi(T(x))$ for any $\xi\in L^{\infty}(\mathcal{F},\mathbb{K}) $ and any $x\in \mathcal{M}$. In addition, $T$ is called an isometric isomorphism if $T$ is bijective and isometric such that both $T$ and $T^{-1}$ are a continuous module homomorphisms, at this time $\mathcal{M}$ and $\mathcal{M}_{1}$ are said to be isometrically isomorphic. Denote by $Hom(\mathcal{M},\mathcal{M}_{1})$ the set of continuous module homomorphisms from $\mathcal{M}$ to $\mathcal{M}_{1}$, it is easy to check that $Hom(\mathcal{M},\mathcal{M}_{1})$ becomes a normed module over $L^{\infty}(\mathcal{F},\mathbb{K})$ when it is endowed with the operator norm and the module multiplication $(\xi T)(x)=\xi(T(x))$ for any $\xi\in L^{\infty}(\mathcal{F},\mathbb{K})$, any $T\in Hom(\mathcal{M},\mathcal{M}_{1})$ and any $x\in \mathcal{M}$. $Hom(\mathcal{M},L^{1}(\mathcal{F},\mathbb{K}))$ is called the module dual of $\mathcal{M}$, denoted by $\mathcal{M}_{m}^{*}$. Here we employ the notation $\mathcal{M}_{m}^{*}$ instead of $\mathcal{M}^{*}$ as in \cite{Gigl} mainly because $\mathcal{M}$ sometimes happens to be also an $RN$ space and previously we have used $\mathcal{M}^{*}$ for the random conjugate space of $\mathcal{M}$.
	
\end{defn}

When $\mathcal{M}$ and $\mathcal{N}$ are both $L^{\infty}(\mathcal{F},\mathbb{K})$-modules, Gigli already proved that $Hom(\mathcal{M},\mathcal{N})$ is also an $L^{\infty}(\mathcal{F},\mathbb{K})$-module, see \cite[pp.16-17]{Gigl}. Proposition \ref{pro.3.9} below is merely a restatement of $v)$ of Proposition 1.2.12, which shows that when $\mathcal{M}$ and $\mathcal{N}$ are $L^{p_{1}}(\mathcal{F})$-normed and $L^{p_{2}}(\mathcal{F})$-normed $L^{\infty}(\mathcal{F},\mathbb{K})$-modules, respectively, where $p_{1}\geq p_{2}\in [1,+\infty]$, $Hom(\mathcal{M},\mathcal{N})$ is an $L^q(\mathcal{F})$-normed $L^{\infty}(\mathcal{F},\mathbb{K})$-module. For this, let us recall some basic facts on continuous module homomorphisms between RN modules. Let $(S,\| \cdot\|)$ and $(S_1,\| \cdot\|)$ be two RN modules over $\mathbb{K}$ with base $(\Omega,\mathcal{F},\mu)$, further let $B(S,S_{1})$ be the $L^{0}(\mathcal{F},\mathbb{K})$-module of continuous module homomorphisms from $(S,\mathcal{T}_{\varepsilon,\lambda})$ to $(S _{1},\mathcal{T}_{\varepsilon,\lambda})$, then it is known in \cite{Guo6} that $T\in B(S,S_{1})$ iff $T$ is an a.e bounded linear operator from $S$ to $S_{1}$, namely $T$ is linear and there exists $\xi\in L^0_{+}(\mathcal{F})$ such that $\|T(x)\|\leq \xi \|x\|$ for any $x\in S$, and at this time $\|T\|:=\bigwedge\{\xi\in L^0_{+}(\mathcal{F}): \|T(x)\|\leq \xi \|x\| ~$for any$~ x\in S \}$ is equal to $\bigvee\{\|T(x)\|:x\in S ~$and$~\|x\|\leq 1  \}$, called the $L^0$-norm of $T$. It is also known from \cite{Guo6} that $(B(S,S_{1}),\| \cdot\|)$ is still an RN module  over $\mathbb{K}$ with base $(\Omega,\mathcal{F},\mu)$, and moreover, $B(S,S_{1})$ is $\mathcal{T}_{\varepsilon,\lambda}$-complete when $S_{1}$ is $\mathcal{T}_{\varepsilon,\lambda}$-complete. It is easy to observe that $B(S,L^{0}(\mathcal{F},\mathbb{K}))$ is just $S^*$, namely the random conjugate space of $S$.

Let $S,S_{1}$ and $B(S,S_{1})$ be as stated above. Since $L^{p}(S)$ is $\mathcal{T}_{\varepsilon,\lambda}$-dense in $S$ for any given $p\in [1,+\infty]$, it is also easy to see that $\{x\in L^{p}(S):\|x\|\leq 1\}$ is $\mathcal{T}_{\varepsilon,\lambda}$-dense in $\{x\in S:\|x\|\leq 1\}$, and thus it always holds that $\|T\|=\bigvee\{\|T(x)\|:x\in L^{p}(S)~\text{and}~\|x\|\leq 1\}$ for any $T\in B(S,S_{1})$, it follows immediately from this observation that Proposition \ref{pro.3.9} and Corollary \ref{coro.3.10} below are indeed a restatement of $v)$ of Proposition 1.2.12 and $i)$ of Proposition 1.2.14 of \cite{Gigl}, respectively.

\begin{prop}\label{pro.3.9}
	Let  $p_1\geq p_2\in [1,+\infty]$, $(S_1,\| \cdot\|)$ and $(S_2,\| \cdot\|)$ be two $\mathcal{T}_{\varepsilon,\lambda}$-complete RN modules over $\mathbb{K}$ with base $(\Omega,\mathcal{F},\mu)$, and $q\in[1,+\infty]$ such that $\frac{1}{p_2}=\frac{1}{p_1}+\frac{1}{q}$. Then $(L^{q}(B(S_1,S_2)), \normmm{\cdot}_{q})$ is isometrically isomorphic onto $Hom(L^{p_{1}}(S_1),L^{p_{2}}(S_2))$ under the canonical mapping $Q:L^{q}(B(S_1,S_2))\rightarrow  Hom(L^{p_{1}}(S_1),L^{p_{2}}(S_2))$ defined by $Q(T)=T\mid_{L^{p_{1}}(S_1)}$ for any $T\in L^{q}(B(S_1,S_2))$, where $T\mid_{L^{p_{1}}(S_1)}$ stands for the restriction of $T$ to $L^{p_{1}}(S_1)$ and $L^{q}(B(S_1,S_2))$ is the $L^{q}(\mathcal{F})$-normed $L^{\infty}(\mathcal{F},\mathbb{K})$-module generated by the  $\mathcal{T}_{\varepsilon,\lambda}$-complete RN module $B(S_1,S_2)$.
\end{prop}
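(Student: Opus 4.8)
The plan is to show that the canonical restriction map $Q$ is a well-defined, $L^{\infty}(\mathcal{F},\mathbb{K})$-linear, injective, isometric bijection. Throughout I identify, via Theorem \ref{thm.3.6}, the modules $L^{p_1}(S_1)$ and $L^{p_2}(S_2)$ with the abstract $L^{p}$-spaces generated by the $\mathcal{T}_{\varepsilon,\lambda}$-complete RN modules $S_1$ and $S_2$, and I use that $B(S_1,S_2)$ is itself a $\mathcal{T}_{\varepsilon,\lambda}$-complete RN module (since $S_2$ is $\mathcal{T}_{\varepsilon,\lambda}$-complete) carrying the $L^0$-norm $\|T\|=\bigvee\{\|T(x)\|:x\in S_1,\ \|x\|\le 1\}$. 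First I would check well-definedness together with the inequality $\normmm{Q(T)}_{\mathrm{op}}\le\normmm{T}_q$: for $T\in L^{q}(B(S_1,S_2))$ and $x\in L^{p_1}(S_1)$ the pointwise bound $\|T(x)\|\le\|T\|\,\|x\|$ combined with the generalized H\"older inequality for the exponents $\tfrac{1}{p_2}=\tfrac{1}{p_1}+\tfrac{1}{q}$ yields $\normmm{T(x)}_{p_2}=|\,\|T(x)\|\,|_{p_2}\le |\,\|T\|\,|_{q}\,|\,\|x\|\,|_{p_1}=\normmm{T}_q\,\normmm{x}_{p_1}$, so $T(x)\in L^{p_2}(S_2)$ and $Q(T)\in Hom(L^{p_1}(S_1),L^{p_2}(S_2))$ with operator norm at most $\normmm{T}_q$. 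That $Q$ is $L^{\infty}(\mathcal{F},\mathbb{K})$-linear is immediate from Definition \ref{def.3.8}, and $Q$ is injective because $L^{p_1}(S_1)$ is $\mathcal{T}_{\varepsilon,\lambda}$-dense in $S_1$, so $T|_{L^{p_1}(S_1)}=0$ forces $T=0$ by continuity.

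Next I would prove the reverse inequality $\normmm{T}_q\le\normmm{Q(T)}_{\mathrm{op}}$, which makes $Q$ isometric. The set $\{\|T(x)\|:x\in S_1,\ \|x\|\le 1\}\subseteq L_{+}^{0}(\mathcal{F})$ is directed upward: given $x_1,x_2$ of norm $\le 1$ and $a=[\,\|T(x_1)\|\ge\|T(x_2)\|\,]\in B_{\mathcal{F}}$, the element $I_{a}x_1+I_{a^c}x_2$ again has norm $\le 1$ and realizes $\|T(x_1)\|\vee\|T(x_2)\|$. Hence by Proposition \ref{pro.1.1} there is a nondecreasing sequence with $\|T(x_n)\|\uparrow\|T\|$ and $\|x_n\|\le 1$. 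Writing $w_n=\|T(x_n)\|$, the H\"older extremal weight $h_n:=w_n^{q/p_1}$ (truncated to a set of finite measure on which it is bounded, so as to land in the correct space) produces test vectors $z_n:=h_n x_n\in L^{p_1}(S_1)$ with $\normmm{z_n}_{p_1}\le |h_n|_{p_1}$ and $\normmm{T(z_n)}_{p_2}=|h_n w_n|_{p_2}$; by the equality case of H\"older the ratio $\normmm{T(z_n)}_{p_2}/\normmm{z_n}_{p_1}$ approaches $|w_n|_{q}$, whence $\normmm{Q(T)}_{\mathrm{op}}\ge |w_n|_q\uparrow|\,\|T\|\,|_q=\normmm{T}_q$ by monotone convergence. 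The endpoint case $q=+\infty$ (equivalently $p_1=p_2$) I would treat separately, choosing $h_n$ to be the indicator of a finite-measure set on which $w_n$ is close to its essential supremum.

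The remaining and hardest step is surjectivity: given $L\in Hom(L^{p_1}(S_1),L^{p_2}(S_2))$ of operator norm $C$, I must produce $T\in B(S_1,S_2)$ with $T|_{L^{p_1}(S_1)}=L$. The crux is to pass from the single global bound $\normmm{L(x)}_{p_2}\le C\normmm{x}_{p_1}$ to a \emph{pointwise} $L^0$-bound $\|L(x)\|\le\xi\,\|x\|$ for all $x\in L^{p_1}(S_1)$, with a fixed $\xi\in L_{+}^{q}(\mathcal{F})$. This is exactly where the $L^{\infty}(\mathcal{F},\mathbb{K})$-module structure enters: using $L(I_{a}x)=I_{a}L(x)$ over the $a\in B_{\mathcal{F}}$ and patching local near-maximizers of $\|L(x)\|/\|x\|$ by means of $\sigma$-stability and the gluing property produces the pointwise operator-norm function $\xi$ and the stated estimate (this recasts item $v)$ of \cite[Proposition~1.2.12]{Gigl} in the present language). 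Once the pointwise bound holds, $L$ is an a.e.\ bounded module homomorphism on the $\mathcal{T}_{\varepsilon,\lambda}$-dense submodule $L^{p_1}(S_1)$ of $S_1$, so it extends uniquely by $\mathcal{T}_{\varepsilon,\lambda}$-continuity (using $\mathcal{T}_{\varepsilon,\lambda}$-completeness of $S_2$) to some $T\in B(S_1,S_2)$ with $\|T\|=\xi\in L^{q}(\mathcal{F})$; thus $T\in L^{q}(B(S_1,S_2))$ and $Q(T)=L$, while the isometry established above gives $\normmm{T}_q=\normmm{L}_{\mathrm{op}}$. I expect the construction of the pointwise operator norm $\xi$ and the verification that it lies in $L^{q}(\mathcal{F})$ to be the main obstacle, together with the bookkeeping needed to accommodate the $\sigma$-finite base and the endpoint exponents.
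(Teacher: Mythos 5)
Your proposal is correct in substance but takes a genuinely different route from the paper: the paper gives no direct argument at all, instead observing that since $\{x\in L^{p_1}(S_1):\|x\|\le 1\}$ is $\mathcal{T}_{\varepsilon,\lambda}$-dense in $\{x\in S_1:\|x\|\le 1\}$, one has $\|T\|=\bigvee\{\|T(x)\|:x\in L^{p_1}(S_1),\ \|x\|\le 1\}$ for every $T\in B(S_1,S_2)$, whence Proposition \ref{pro.3.9} is literally a restatement of $v)$ of Proposition 1.2.12 of \cite{Gigl}. You instead unpack that content into a self-contained proof, and the parts you carry out in detail are sound: the generalized H\"older inequality gives well-definedness and $\normmm{Q(T)}\le\normmm{T}_q$; injectivity follows from density; and your directedness-plus-extremal-weight argument for the reverse inequality is exactly right (with $h_n=w_n^{q/p_1}$ one gets $\normmm{T(z_n)}_{p_2}/\normmm{z_n}_{p_1}\ge |w_n|_q$, which increases to $|\,\|T\|\,|_q$). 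The only place where you stop at a sketch is surjectivity, and the missing pieces are already in your own toolkit: set $\xi=\bigvee\{\|L(x)\|:x\in L^{p_1}(S_1),\ \|x\|\le1\}$, which is the supremum of an upward-directed set by the same stability argument you used for the isometry; the identical H\"older extremal-weight computation applied to $L$ yields $|\xi|_q\le\normmm{L}$, so $\xi\in L^{q}_{+}(\mathcal{F})$; and the pointwise bound $\|L(x)\|\le\xi\|x\|$ for arbitrary $x\in L^{p_1}(S_1)$ follows by applying the unit-ball bound to the $L^{\infty}$-multiple $x/(\|x\|\vee\varepsilon)$ and letting $\varepsilon\downarrow 0$, the set $[\|x\|=0]$ being handled by $L(I_{[\|x\|=0]}x)=0$. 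With that, $L$ extends by $\mathcal{T}_{\varepsilon,\lambda}$-continuity to some $T\in B(S_1,S_2)$ with $\|T\|=\xi$, so $T\in L^{q}(B(S_1,S_2))$ and $Q(T)=L$. What your route buys is independence from \cite{Gigl}; what the paper's route buys is brevity, at the cost of importing Gigli's module-dual machinery wholesale.
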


Corollary \ref{coro.3.10} below is a restatement of a special case of Proposition \ref{pro.3.9} when $p_1=p$
, $q_2=1$ and $S_2=L^{0}(\mathcal{F},\mathbb{K})$, namely $i)$ of Proposition 1.2.14 of \cite{Gigl}, where we also give a simpler proof of this corollary by using Proposition \ref{pro.3.5} and the idea of proof will be further used in the proof of Proposition \ref{pro.3.13} below.

\begin{cor}\label{coro.3.10}
Let $p\in [1,+\infty]$ and $(S,\|\cdot\|)$ be a $\mathcal{T}_{\varepsilon,\lambda}$-complete $RN$ module over $\mathbb{K}$ with base $(\Omega,\mathcal{F},\mu)$. Then $(L^{q}(S^{*}),\normmm{\cdot}_{q})$ is isometrically isomorphic onto $((L^{p}(S))_{m}^{*},\normmm{\cdot})$ in the sense of $L^{\infty}(\mathcal{F},\mathbb{K})$-modules under the canonical mapping $\hat{T}$ defined by $\hat{T}_{f}(x)=f(x)$ for any $f\in L^{q}(S^{*})$ and any $x\in L^{p}(S)$, where $\hat{T}_{f}$ denotes $\hat{T}(f)$ and $q$ is the H\"{o}lder conjugate number of $p$.
\end{cor}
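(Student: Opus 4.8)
The plan is to show that $\hat{T}$ is a well-defined injective $L^{\infty}(\mathcal{F},\mathbb{K})$-module homomorphism satisfying $\normmm{\hat{T}_{f}}\leq\normmm{f}_{q}$, and then to prove surjectivity together with the reverse estimate; the two combined force $\hat{T}$ to be an isometric isomorphism of $L^{\infty}(\mathcal{F},\mathbb{K})$-modules. For well-definedness, fix $f\in L^{q}(S^{*})$ and $x\in L^{p}(S)$. The defining inequality $|f(x)|\leq\|f\|\,\|x\|$ of the random conjugate space (Example \ref{exm.2.12}) together with the classical H\"older inequality (using $\frac{1}{p}+\frac{1}{q}=1$) gives $\int_{\Omega}|f(x)|\,d\mu\leq|\|f\||_{q}\,|\|x\||_{p}=\normmm{f}_{q}\normmm{x}_{p}$, so $\hat{T}_{f}(x)=f(x)\in L^{1}(\mathcal{F},\mathbb{K})$ and $\hat{T}_{f}\in(L^{p}(S))_{m}^{*}$ with $\normmm{\hat{T}_{f}}\leq\normmm{f}_{q}$. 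Since $f$ is $L^{0}(\mathcal{F},\mathbb{K})$-linear it is in particular $L^{\infty}(\mathcal{F},\mathbb{K})$-linear, so $\hat{T}_{\xi f}=\xi\hat{T}_{f}$ for $\xi\in L^{\infty}(\mathcal{F},\mathbb{K})$, i.e. $\hat{T}$ is a module homomorphism; and $\hat{T}$ is injective because $\hat{T}_{f}=0$ forces $f=0$ on the $\mathcal{T}_{\varepsilon,\lambda}$-dense submodule $L^{p}(S)$ (Example \ref{ex.3.4}), hence $f=0$ on $S$ by continuity.

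\emph{The case $1\leq p<+\infty$.} Given $F\in(L^{p}(S))_{m}^{*}$, I would integrate to form the scalar functional $\phi(x):=\int_{\Omega}F(x)\,d\mu$, which lies in $(L^{p}(S))'$ with $\|\phi\|\leq\normmm{F}$. Proposition \ref{pro.3.5} then yields a unique $f\in L^{q}(S^{*})$ with $\phi=T_{f}$ and $\normmm{f}_{q}=\|\phi\|$. Substituting $\tilde{I}_{A}x$ for $x$ (any $A\in\mathcal{F}$) and using that both $F$ and $f$ are module homomorphisms turns the identity $\int_{\Omega}F(\tilde{I}_{A}x)\,d\mu=\int_{\Omega}f(\tilde{I}_{A}x)\,d\mu$ into $\int_{A}F(x)\,d\mu=\int_{A}f(x)\,d\mu$ for every $A$, whence $F(x)=f(x)$, i.e. $F=\hat{T}_{f}$, giving surjectivity. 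The isometry then follows by sandwiching: $\normmm{f}_{q}=\|\phi\|\leq\normmm{F}=\normmm{\hat{T}_{f}}\leq\normmm{f}_{q}$.

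\emph{The case $p=+\infty$ (so $q=1$)} is the main obstacle, since Proposition \ref{pro.3.5} is no longer available and the random linear functional must be reconstructed by hand. Given $F\in(L^{\infty}(S))_{m}^{*}$, I would set $f(x):=F(x)\in L^{1}(\mathcal{F},\mathbb{K})\subseteq L^{0}(\mathcal{F},\mathbb{K})$ for $x\in L^{\infty}(S)$; this $f$ is additive and $L^{\infty}(\mathcal{F},\mathbb{K})$-linear. The crux is to manufacture a \emph{pointwise} bound $\xi\in L^{1}_{+}(\mathcal{F})$ with $|f(x)|\leq\xi\|x\|$ for all $x\in L^{\infty}(S)$. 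To this end consider the family $\{|F(x)|:x\in L^{\infty}(S),\ \|x\|\leq 1\}$; because $F$ is a module homomorphism this family is directed upward (for $A=\{|F(x_{1})|\geq|F(x_{2})|\}$ one has $|F(\tilde{I}_{A}x_{1}+\tilde{I}_{A^{c}}x_{2})|=|F(x_{1})|\vee|F(x_{2})|$), so by Proposition \ref{pro.1.1} its supremum $\xi$ is the limit of a nondecreasing sequence $|F(x_{k})|\uparrow\xi$; since $\int_{\Omega}|F(x_{k})|\,d\mu=|F(x_{k})|_{1}\leq\normmm{F}\,\normmm{x_{k}}_{\infty}\leq\normmm{F}$, the monotone convergence theorem gives $\xi\in L^{1}_{+}(\mathcal{F})$ with $|\xi|_{1}\leq\normmm{F}$. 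A truncation argument (multiplying by $\tilde{I}_{A_{n}}/\|x\|\in L^{\infty}(\mathcal{F},\mathbb{K})$, where $A_{n}=\{\omega:\|x\|(\omega)\geq 1/n\}$, and letting $n\to\infty$) then upgrades the unit-ball estimate to $|f(x)|\leq\xi\|x\|$ for every $x\in L^{\infty}(S)$. Finally, since $L^{\infty}(S)$ is $\mathcal{T}_{\varepsilon,\lambda}$-dense in $S$ and $f$ is a.e. bounded (hence $L^{0}$-Lipschitz, so $\mathcal{T}_{\varepsilon,\lambda}$-continuous), $f$ extends uniquely to an element of $S^{*}$, still denoted $f$, with $\|f\|\leq\xi$; thus $f\in L^{1}(S^{*})$ and $\normmm{f}_{1}=|\|f\||_{1}\leq|\xi|_{1}\leq\normmm{F}$. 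Because $\hat{T}_{f}$ agrees with $F$ on $L^{\infty}(S)$ we get $F=\hat{T}_{f}$, and sandwiching as before yields $\normmm{\hat{T}_{f}}=\normmm{f}_{1}$. In both cases $\hat{T}$ would then be a bijective isometric module homomorphism whose inverse is automatically a module homomorphism, which is exactly what the statement asserts.
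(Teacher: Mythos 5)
Your proposal is correct and follows essentially the same route as the paper: for $1\leq p<+\infty$ it integrates to land in $(L^{p}(S))'$, invokes Proposition \ref{pro.3.5}, and localizes with $\tilde{I}_{A}$ to recover the pointwise identity, while for $p=+\infty$ it builds $\xi=\bigvee\{|F(x)|:\|x\|\leq 1\}$ via upward directedness and Proposition \ref{pro.1.1}, shows $\xi\in L^{1}_{+}(\mathcal{F})$, and extends by $\mathcal{T}_{\varepsilon,\lambda}$-density. The only difference is that you spell out details the paper leaves implicit (the directedness check, the truncation upgrading the unit-ball bound, and the well-definedness/injectivity of $\hat{T}$), all of which are sound.
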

\begin{proof}
	When $p<+\infty$, for any $T\in (L^{p}(S))_{m}^{*}$, define $Int(T):L^{p}(S)\rightarrow \mathbb{K}$ by $Int(T)(x)=\int_{\Omega}T(x)d\mu$ for any $x\in L^{p}(S)$, then $Int(T)\in L^{p}(S)'$, and hence by Proposition \ref{pro.3.5} there exists $f\in L^{q}(S^{*})$ such that $\int_{\Omega}T(x)d\mu=\int_{\Omega}f(x)d\mu$ for each $x\in L^{p}(S)$ and $\normmm{Int(T)}=\normmm{f}_{q}$ (where $\normmm{Int(T)}$ denotes the norm of $Int(T)$). Further, since $L^{p}(S)$ is an $L^{\infty}(\mathcal{F},\mathbb{K})$-module, $\tilde{I}_{A}x\in L^{p}(S)$ for any $A\in \mathcal{F}$ and $x\in L^{p}(S)$, then $\int_{A}T(x)d\mu=\int_{\Omega}T(\tilde{I}_{A}x)d\mu=\int_{\Omega}f(\tilde{I}_{A}x)d\mu=\int_{A}f(x)d\mu$, so that $T(x)=f(x)$ for any $x\in L^{p}(S)$, namely $T=\hat{T}_{f}$. Furthermore, $\normmm{T}=\normmm{Int(T)}=\normmm{f}_{q}$ by the definition of $\normmm{T}$.
	
	When $p=+\infty$, for any $T\in (L^{\infty}(S))_{m}^{*}$, let $\xi=\bigvee\{|T(x)|:x\in L^{\infty}(S)$ and $\normmm{x}_{\infty}\leq 1\}$, then it is obvious that $\xi=\bigvee\{|T(x)|:x\in S~\text{and}~\|x\|\leq 1\}$. Since it is easy to check $\{|T(x)|:x\in S~\text{and}~\|x\|\leq 1\}$ is directed upwards in $(\bar{L}^{0}(\mathcal{F}),\leq)$, then by Proposition \ref{pro.1.1} there exists a sequence $\{x_{n},n\in \mathbb{N}\}$ in $\{x\in S:\|x\|\leq 1\}$ such that $\{|T(x_{n})|,n\in \mathbb{N}\}$ converges a.e. to $\xi$ in a nondecreasing manner. On the other hand, since $\sup\{\int_{\Omega}|T(x)|d\mu:x\in S~\text{and}~\|x\|\leq 1\}=\sup\{\int_{\Omega}|T(x)|d\mu:x\in L^{\infty}(S)~\text{and}~\normmm{x}_{\infty}\leq 1\}<+\infty$, then $\xi\in L^{1}(\mathcal{F})$ and $|\xi|_{1}=\normmm{T}$. Further, it is easy to observe that $|T(x)|\leq \xi\|x\|$ for any $x\in L^{\infty}(S)$, so $T$ is clearly continuous from $(L^{\infty}(S),\mathcal{T}_{\varepsilon,\lambda})$ to $(L^{1}(\mathcal{F},\mathbb{K}),\mathcal{T}_{\varepsilon,\lambda})$, we can obtain a unique continuous linear extension $f:S\rightarrow L^{0}(\mathcal{F},\mathbb{K})$ such that $|f(x)|\leq \xi\|x\|$ for any $x\in S$. It is obvious that $f\in L^{1}(S^{*})$ and $\|f\|=\xi$, and we also have that $T(x)=f(x)=\hat{T}_{f}(x)$ for any $x\in L^{\infty}(S)$.
\end{proof}

	Let $(\mathcal{M},\normmm{\cdot})$ be a normed module over $L^{\infty}(\mathcal{F},\mathbb{K})$, define $Int: \mathcal{M}_{m}^{*}\rightarrow \mathcal{M}'$ (denotes the conjugate space of $\mathcal{M}$ as a normed space) by $Int(F)(x)=\int_{\Omega}F(x)d\mu$ for an $x\in \mathcal{M}$, where $F$ is any element of $\mathcal{M}_{m}^{*}$, as proved by Gigli in \cite{Gigl}, $Int$ is an isometric embedding. Furthermore, $\mathcal{M}$ is said to have full dual if $Int$ is surjective. Now, define $J_{\mathcal{M}}: \mathcal{M}\rightarrow \mathcal{M}_{m}^{**}:=(\mathcal{M}_{m}^*)_{m}^*$ by $J_{\mathcal{M}}(f)(T)=T(f)$ for any $T\in \mathcal{M}_{m}^{*}$, where $f$ is any element of $\mathcal{M}$. It is also known from \cite{Gigl} that $J_{\mathcal{M}}$ is an isometric embedding when $\mathcal{M}$ has full dual. For any $L^{\infty}(\mathcal{F},\mathbb{K})$-module $\mathcal{M}$ with full dual, $\mathcal{M}$ is said to be module reflexive if $J_{\mathcal{M}}$ is surjective. Proposition 1.2.17 of \cite{Gigl} states that $\mathcal{M}$ is module reflexive  if $\mathcal{M}$ is an $L^{\infty}(\mathcal{F},\mathbb{K})$-module with full dual and $\mathcal{M}$ is reflexive (as a Banach space). Corollary 3.11 is a restatement of Corollary 1.2.18 of \cite{Gigl}, where we restate it by Theorem \ref{thm.3.6} to lay bare the fact that module reflexivity and reflexivity of $L^p(S)$ for $1<p<+\infty$ are both characterized by the random reflexivity of $S$, see e.g. \cite{Guo3, GL} for some deep discussions of random reflexivity.

\begin{cor}\label{coro.3.11}
	Let $1<p<+\infty$ and $(S,\|\cdot\|)$ be a $\mathcal{T}_{\varepsilon,\lambda}$-complete RN module over $\mathbb{K}$ with base $(\Omega,\mathcal{F},\mu)$. Then $( L^p(S), \normmm{\cdot}_{p})$ is module reflexive iff it is reflexive  iff $(S,\|\cdot\|)$ is random reflexive.
\end{cor}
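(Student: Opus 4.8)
The plan is to reduce all three conditions to a single statement, namely the surjectivity of the canonical random-bidual embedding $J_{S}\colon S\to S^{**}$, $J_{S}(x)(f)=f(x)$ for $f\in S^{*}$, which is by definition the random reflexivity of $S$ (recall that $J_{S}$ is always random-norm preserving, so only surjectivity is at issue; see \cite{Guo3,GL}). The starting point is the dual identifications already in hand. By Corollary \ref{coro.3.10} the module dual satisfies $(L^{p}(S))_{m}^{*}\cong L^{q}(S^{*})$, where $q$ is the H\"{o}lder conjugate of $p$; since $1<p<+\infty$ forces $1<q<+\infty$, I may apply Corollary \ref{coro.3.10} a second time (now with the conjugate pair $(q,p)$ and the $\mathcal{T}_{\varepsilon,\lambda}$-complete RN module $S^{*}$) to obtain the module bidual $(L^{p}(S))_{m}^{**}\cong L^{p}(S^{**})$. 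In exactly the same way, Proposition \ref{pro.3.5} gives the Banach-space identifications $(L^{p}(S))'\cong L^{q}(S^{*})$ and, iterating, $(L^{p}(S))''\cong L^{p}(S^{**})$.

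Next I would record that $L^{p}(S)$ always has full dual. Indeed, under the identifications of Proposition \ref{pro.3.5} and Corollary \ref{coro.3.10}, an element $f\in L^{q}(S^{*})$ corresponds in $(L^{p}(S))_{m}^{*}$ to $\hat T_{f}$ with $\hat T_{f}(x)=f(x)$ and in $(L^{p}(S))'$ to $T_{f}$ with $T_{f}(x)=\int_{\Omega}f(x)\,d\mu$; since $Int(\hat T_{f})(x)=\int_{\Omega}\hat T_{f}(x)\,d\mu=T_{f}(x)$, the map $Int\colon (L^{p}(S))_{m}^{*}\to (L^{p}(S))'$ is carried to the identity of $L^{q}(S^{*})$ and is in particular surjective. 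Hence $L^{p}(S)$ has full dual, so module reflexivity is well defined and, by Proposition 1.2.17 of \cite{Gigl}, reflexivity of $L^{p}(S)$ as a Banach space already implies its module reflexivity.

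The heart of the argument is then a compatibility check: I claim that under the two chains of identifications above, both the canonical Banach embedding $L^{p}(S)\to (L^{p}(S))''$ and the canonical module embedding $J_{L^{p}(S)}\colon L^{p}(S)\to (L^{p}(S))_{m}^{**}$ coincide with the map $L^{p}(J_{S})\colon L^{p}(S)\to L^{p}(S^{**})$ induced by $J_{S}$. Granting this, reflexivity of $L^{p}(S)$ is equivalent to surjectivity of $L^{p}(J_{S})$, and likewise module reflexivity of $L^{p}(S)$ is equivalent to surjectivity of $L^{p}(J_{S})$; thus the first two conditions are equivalent to each other and to this single surjectivity statement. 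Finally, because $L^{p}(S^{**})$ is $\mathcal{T}_{\varepsilon,\lambda}$-dense in $S^{**}$ and $J_{S}$ is an isometry, surjectivity of $L^{p}(J_{S})$ onto $L^{p}(S^{**})$ propagates to surjectivity of $J_{S}$ onto $S^{**}$ and conversely, so $L^{p}(J_{S})$ is onto iff $S$ is random reflexive, which closes the three-way equivalence.

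I expect the compatibility check in the third paragraph to be the main obstacle: it is a diagram chase through the definition of $J_{L^{p}(S)}$, the pairing $\hat T_{f}(x)=f(x)$ of Corollary \ref{coro.3.10}, and the integral pairing of Proposition \ref{pro.3.5}, and one must verify carefully that evaluating an $x\in L^{p}(S)$ along each route yields the common value dictated by $J_{S}(x)\in S^{**}$. The density-and-isometry step passing from $L^{p}(J_{S})$ to $J_{S}$ is routine but must invoke both the $\mathcal{T}_{\varepsilon,\lambda}$-density of $L^{p}$ inside the generating RN module (Example \ref{ex.3.4}) and the $\mathcal{T}_{\varepsilon,\lambda}$-completeness supplied by Theorem \ref{thm.3.6}.
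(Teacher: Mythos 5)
Your proposal is correct, but it takes a genuinely different route from the paper. The paper's proof of Corollary \ref{coro.3.11} is a two-line citation: the equivalence of module reflexivity and reflexivity of $L^{p}(S)$ is quoted as Corollary 1.2.18 of \cite{Gigl}, and the equivalence of reflexivity of $L^{p}(S)$ with random reflexivity of $S$ is quoted as known from \cite{Guo3} or \cite{GL}. You instead derive both equivalences from scratch by iterating the dual identifications $(L^{p}(S))_{m}^{*}\cong L^{q}(S^{*})$ (Corollary \ref{coro.3.10}) and $(L^{p}(S))'\cong L^{q}(S^{*})$ (Proposition \ref{pro.3.5}) to identify both biduals with $L^{p}(S^{**})$, checking that the two canonical embeddings both become $L^{p}(J_{S})$ under these identifications, and then transferring surjectivity between $L^{p}(J_{S})$ and $J_{S}$ via the isometry-plus-density argument. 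Each step is sound: the double application of Corollary \ref{coro.3.10} is legitimate because $1<q<+\infty$ and $S^{*}$ is always $\mathcal{T}_{\varepsilon,\lambda}$-complete; the full-dual observation ($Int$ becoming the identity of $L^{q}(S^{*})$) is a correct diagram chase; and the final density step works because $J_{S}$ is a random-norm-preserving map with $\mathcal{T}_{\varepsilon,\lambda}$-closed image. What your approach buys is a self-contained proof that makes the mechanism transparent --- all three conditions are literally the same surjectivity statement --- and it even renders the appeal to Proposition 1.2.17 of \cite{Gigl} unnecessary; the cost is that you must carry out the compatibility check you flag in your third paragraph, which, while routine (it reduces to $\int_{A}J_{S}(x)(f)\,d\mu=\int_{A}f(x)\,d\mu$ for all $A\in\mathcal{F}$ forcing $J_{S}(x)(f)=f(x)$), is exactly the bookkeeping the paper avoids by citing the literature.
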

\begin{proof}
		The fact that $( L^p(S), \normmm{\cdot}_{p})$ is module reflexive iff $( L^p(S), \normmm{\cdot}_{p})$ is reflexive is just Corollary 1.2.18 of \cite{Gigl}, while the fact that $( L^p(S), \normmm{\cdot}_{p})$ is reflexive iff $(S,\|\cdot\|)$ is random reflexive is also known, see e.g. \cite{Guo3} or \cite{GL}.
\end{proof}

Although $L^{\infty}(S)$ dose not have full dual in general, Corollary \ref{coro.3.10} shows that $(L^{\infty}(S))_{m}^*=L^{1}(S^*)$, whereas it is well known that $L^{1}(S^*)$ is merely a proper subspace of $L^{\infty}(S)'$ in general, which indeed shows the wisdom of introducing the notion of a module dual. Just as pointed out by Gigl in \cite{Gigl}, the framework of an $L^{\infty}(\mathcal{F},\mathbb{K})$-module is perhaps too general to guarantee that the Hahn-Banach theorem exists for module duals, but the following three propositions show that the algebraic and geometric forms of the Hahn-Banach theorem for module duals of $L^{p}(\mathcal{F})$-normed $L^{\infty}(\mathcal{F},\mathbb{K})$-modules can be obtained by the theory of random conjugate spaces for RN spaces or RN modules. We first restate Corollary 1.2.16 of \cite{Gigl} to Proposition \ref{pro.3.12} and also give a natural and simpler proof by a direct application of the Hahn-Banach theorem for random linear functionals.

\begin{prop}\label{pro.3.12}
	Let $1\leq p<+\infty$ and $(S,\|\cdot\|)$ be a $\mathcal{T}_{\varepsilon,\lambda}$-complete RN module over $\mathbb{K}$ with base $(\Omega,\mathcal{F},\mu)$. Then when $p>1$, for any $v \in L^p(S)$ there exists $L\in(L^p(S))_{m}^* (=L^q(S^*))$ such that $\|L\|^q=\|v\|^p=L(v)$, where $1<q<+\infty$ is such that $\frac{1}{p}+\frac{1}{q}=1$, and $\|L\|$ stands for the $L^0$-norm of $L$. When $p=1$, there exists $L\in L^{\infty}(S^*)$ such that $L(v)=\|v\|$ and $\|L\|=I_{[v\neq 0]}$.
\end{prop}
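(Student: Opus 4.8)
The plan is to first produce a single norming functional at the level of the $RN$ module $(S,\|\cdot\|)$ by the Hahn-Banach theorem for a.e. bounded random linear functionals (Example \ref{exm.2.12}), and then to obtain the desired $L\in(L^p(S))_m^*$ by rescaling this functional with a suitable element of $L^0(\mathcal{F})$ and transporting it through the isometric isomorphism $(L^p(S))_m^*=L^q(S^*)$ of Corollary \ref{coro.3.10}. Concretely, I would fix $v$ and seek $g\in S^*$ with $g(v)=\|v\|$ and $\|g\|=I_{[v\neq 0]}$; the two displayed identities then follow for an $L^0$-scalar multiple of $g$ by a direct computation using $\frac{1}{p}+\frac{1}{q}=1$.

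First I would build $g$. Let $M=\{\xi v:\xi\in L^0(\mathcal{F},\mathbb{K})\}$ be the cyclic submodule generated by $v$ and define $f_0:M\to L^0(\mathcal{F},\mathbb{K})$ by $f_0(\xi v)=\xi\|v\|$. This is well defined: if $\xi_1 v=\xi_2 v$ then $|\xi_1-\xi_2|\|v\|=\|(\xi_1-\xi_2)v\|=0$, so $\xi_1=\xi_2$ on $[v\neq 0]$, while $\|v\|=0$ on $[v=0]$, whence $\xi_1\|v\|=\xi_2\|v\|$. Since $|f_0(\xi v)|=|\xi|\|v\|=\|\xi v\|$, the functional $f_0$ is $L^0$-linear and a.e. bounded on $M$, with $\|f_0\|=I_{[v\neq 0]}$ (on $[v=0]$ every element of $M$ is null, so the defining bound is vacuous there, whereas on $[v\neq 0]$ the choice $\xi=1/\|v\|$ attains the value $1$). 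By the Hahn-Banach theorem of Example \ref{exm.2.12} there is $g\in S^*$ extending $f_0$ with $\|g\|=\|f_0\|=I_{[v\neq 0]}$; in particular $g(v)=\|v\|$.

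For the case $p=1$ one simply takes $L=g$, viewed as an element of $L^\infty(S^*)$ since $\|g\|=I_{[v\neq 0]}\in L^\infty(\mathcal{F})$; then $L(v)=\|v\|$ and $\|L\|=I_{[v\neq 0]}$, as required. For $p>1$ I would set $L=\|v\|^{p-1}g$. Then $L(v)=\|v\|^{p-1}g(v)=\|v\|^{p}$ and $\|L\|=\|v\|^{p-1}\|g\|=\|v\|^{p-1}$ (the indicator $I_{[v\neq 0]}$ being absorbed since $\|v\|^{p-1}$ vanishes on $[v=0]$), so $\|L\|^q=\|v\|^{(p-1)q}=\|v\|^{p}$ because $(p-1)q=p$. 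Finally $L\in L^q(S^*)$ since $|\|L\||_q^q=\int_\Omega\|v\|^{(p-1)q}\,d\mu=\int_\Omega\|v\|^{p}\,d\mu=\normmm{v}_p^p<+\infty$, and under Corollary \ref{coro.3.10} this $L$ is exactly the sought element of $(L^p(S))_m^*$. The only genuinely delicate point is the construction of $g$: the classical one-dimensional Hahn-Banach normalization must be carried out over the ring $L^0(\mathcal{F},\mathbb{K})$, where $v$ may vanish on a part of $\Omega$, and it is precisely this local vanishing that forces the norm of the norming functional to be the indicator $I_{[v\neq 0]}$ rather than the constant $1$; everything else is a routine rescaling governed by H\"older conjugacy.
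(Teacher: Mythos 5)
Your proposal is correct and follows essentially the same route as the paper: the paper likewise obtains a norming functional $f\in S^*$ with $f(v)=\|v\|$ and $\|f\|=I_{[v\neq 0]}$ from the Hahn--Banach theorem of Example \ref{exm.2.12} and then sets $L=\|v\|^{p-1}f$. The only difference is that you spell out the construction of the norming functional on the cyclic submodule generated by $v$, which the paper leaves implicit.
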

\begin{proof}
	By the Hahn-Banach theorem for random linear functions on the $RN$ module $(S,\|\cdot\|)$, see e.g. Example \ref{exm.2.12}, there exists $f\in S^{*}$ such that $f(v)=\|v\|$ and $\|f\|=I_{[v\neq 0]}$, where $[v\neq 0]$ stands for the equivalence class of the set $\{\omega\in \Omega:\|v\|^{0}(\omega)\neq 0\}$ for an arbitrarily chosen representative $\|v\|^{0}$ of $\|v\|$. Letting $L=\|v\|^{p-1}f$ completes the proof when $p>1$. When $p=1$, it is obvious.
\end{proof}

\begin{prop}\label{pro.3.13}
	Let $(S,\|\cdot\|)$ be a $\mathcal{T}_{\varepsilon,\lambda}$-complete RN module over $\mathbb{K}$ with base $(\Omega,\mathcal{F},\mu)$, $1\leq p\leq +\infty$ and $\mathcal{M}$ a submodule over $L^{\infty}(\mathcal{F},\mathbb{K})$ of $L^p(S)$. Then, for every $f\in \mathcal{M}_{m}^{*}$,  there exists $F\in (L^p(S))_{m}^*:=L^q(S^*)$ such that $F|_{\mathcal{M}}=f$ and $\normmm{F}_q=\normmm{f}:=\sup\{\int_{\Omega}|f(x)|d\mu : x\in \mathcal{M} ~$and$~\normmm{x}_p \leq 1\}$
\end{prop}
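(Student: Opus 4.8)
The plan is to reduce the module-valued extension to scalar functional-analytic tools: the classical Hahn--Banach theorem together with the duality of Proposition \ref{pro.3.5} when $p<+\infty$, and the Hahn--Banach theorem for a.e.\ bounded random linear functionals (Example \ref{exm.2.12}) when $p=+\infty$. The structural fact used everywhere is that $\mathcal{M}$ is a module over $L^{\infty}(\mathcal{F},\mathbb{K})$, so $\tilde{I}_{A}x\in\mathcal{M}$ (indeed $\xi x\in\mathcal{M}$ for $\xi\in L^{\infty}(\mathcal{F},\mathbb{K})$) whenever $x\in\mathcal{M}$; this locality is what allows passage between the scalar functional $\int_{\Omega}f(\cdot)\,d\mu$ and the genuinely $L^{0}$-valued object $f$. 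A preliminary observation I would record is that, replacing $x$ by $\mathrm{sign}(\overline{f(x)})\,x\in\mathcal{M}$, one has $\normmm{f}=\sup\{\int_{\Omega}f(x)\,d\mu:x\in\mathcal{M},\ \normmm{x}_{p}\le 1\}$, which identifies $\normmm{f}$ with the operator norm of the scalar functional introduced below.

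For $1\le p<+\infty$ I would argue exactly as in Corollary \ref{coro.3.10}. Define $Int(f):\mathcal{M}\to\mathbb{K}$ by $Int(f)(x)=\int_{\Omega}f(x)\,d\mu$; by the preliminary observation this is a bounded $\mathbb{K}$-linear functional on $(\mathcal{M},\normmm{\cdot}_{p})$ of norm exactly $\normmm{f}$. Extend it by the classical Hahn--Banach theorem to $\Phi\in(L^{p}(S))'$ with $\normmm{\Phi}=\normmm{f}$, and apply Proposition \ref{pro.3.5} to produce $F\in L^{q}(S^{*})$ with $\Phi(x)=\int_{\Omega}F(x)\,d\mu$ for all $x\in L^{p}(S)$ and $\normmm{F}_{q}=\normmm{\Phi}=\normmm{f}$. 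It then remains to check $F|_{\mathcal{M}}=f$: for $x\in\mathcal{M}$ and $A\in\mathcal{F}$, the locality $\tilde{I}_{A}x\in\mathcal{M}$ together with the fact that $F\in S^{*}$ and $f$ are module homomorphisms gives
\[
\int_{A}F(x)\,d\mu=\int_{\Omega}F(\tilde{I}_{A}x)\,d\mu=\Phi(\tilde{I}_{A}x)=Int(f)(\tilde{I}_{A}x)=\int_{\Omega}f(\tilde{I}_{A}x)\,d\mu=\int_{A}f(x)\,d\mu,
\]
and arbitrariness of $A$ forces $F(x)=f(x)$ a.e., i.e.\ $\hat{T}_{F}|_{\mathcal{M}}=f$ in the sense of Corollary \ref{coro.3.10}.

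The case $p=+\infty$ (so $q=1$) is the main obstacle, precisely because $L^{\infty}(S)$ does not have full dual: $(L^{\infty}(S))'$ is strictly larger than $L^{1}(S^{*})$, so Proposition \ref{pro.3.5} is unavailable and the integral-extension argument collapses. Here I would instead exhibit $f$ directly as an a.e.\ bounded random linear functional on the $\mathbb{K}$-subspace $\mathcal{M}\subseteq S$. The family $\{|f(x)|:x\in\mathcal{M},\ \|x\|\le 1\}\subseteq L^{0}_{+}(\mathcal{F})$ is directed upwards: for two such $x,y$, putting $A=\{\,|f(x)|\ge|f(y)|\,\}$, the element $z=\tilde{I}_{A}\,\mathrm{sign}(\overline{f(x)})\,x+\tilde{I}_{A^{c}}\,\mathrm{sign}(\overline{f(y)})\,y$ again lies in $\mathcal{M}$, has $\|z\|\le 1$ a.e., and satisfies $|f(z)|=|f(x)|\vee|f(y)|$. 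By Proposition \ref{pro.1.1} I choose $\{x_{n}\}$ in this set with $|f(x_{n})|\uparrow\xi:=\bigvee\{|f(x)|:x\in\mathcal{M},\ \|x\|\le 1\}$; monotone convergence and $\int_{\Omega}|f(x_{n})|\,d\mu\le\normmm{f}$ give $\xi\in L^{1}(\mathcal{F})$ with $|\xi|_{1}\le\normmm{f}$.

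To finish the $p=+\infty$ case I would note that the normalization $y=(\|x\|+\varepsilon)^{-1}x\in\mathcal{M}$ (legitimate since $(\|x\|+\varepsilon)^{-1}\in L^{\infty}(\mathcal{F})$), followed by $\varepsilon\downarrow 0$, yields $|f(x)|\le\xi\|x\|$ for every $x\in\mathcal{M}$, so that the random norm of $f$ on $\mathcal{M}$ equals $\xi$; Hölder's inequality gives the reverse bound $|\xi|_{1}\ge\normmm{f}$, whence $|\xi|_{1}=\normmm{f}$. Then the Hahn--Banach theorem for a.e.\ bounded random linear functionals (Example \ref{exm.2.12}) extends $f$ to $F\in S^{*}$ with $F|_{\mathcal{M}}=f$ and $\|F\|=\xi\in L^{1}(\mathcal{F})$, so $F\in L^{1}(S^{*})=(L^{\infty}(S))_{m}^{*}$ with $\normmm{F}_{1}=|\xi|_{1}=\normmm{f}$, which is the desired extension via Corollary \ref{coro.3.10}. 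The only genuinely delicate points I anticipate are the directedness/selection step via Proposition \ref{pro.1.1} and the verification that the pointwise norm $\xi$ lands in $L^{1}$ with the correct integral norm; once these are in place, both cases conclude uniformly through the identification of $(L^{p}(S))_{m}^{*}$ with $L^{q}(S^{*})$.
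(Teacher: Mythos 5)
Your proof is correct, and your $p=+\infty$ case is essentially the paper's own argument (the paper simply points back to the proof of Corollary \ref{coro.3.10} for the directedness, the membership $\xi\in L^{1}_{+}(\mathcal{F})$ and the bound $|f(x)|\le\xi\|x\|$, all of which you spell out correctly, before invoking the random Hahn--Banach theorem exactly as you do). For $1\le p<+\infty$, however, you take a genuinely different route. The paper first passes to the $\normmm{\cdot}_{p}$-closure of $\mathcal{M}$, observes that this closure is itself an $L^{p}(\mathcal{F})$-normed $L^{\infty}(\mathcal{F},\mathbb{K})$-module, invokes Theorem \ref{thm.3.6} to write it as $L^{p}(S_{1})$ for a $\mathcal{T}_{\varepsilon,\lambda}$-closed submodule $S_{1}$ of $S$, represents $f$ by some $F_{1}\in L^{q}(S_{1}^{*})$ via Corollary \ref{coro.3.10}, and then extends $F_{1}$ to $F\in S^{*}$ by the random Hahn--Banach theorem with $\|F\|=\|F_{1}\|$. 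You instead work at the scalar level: extend $Int(f)$ by the classical Hahn--Banach theorem to $\Phi\in(L^{p}(S))'$, represent $\Phi$ by $F\in L^{q}(S^{*})$ via Proposition \ref{pro.3.5}, and recover $F|_{\mathcal{M}}=f$ from the locality identity $\int_{A}F(x)\,d\mu=\int_{A}f(x)\,d\mu$ for all $A\in\mathcal{F}$ --- in effect re-running the proof of Corollary \ref{coro.3.10} with one extra classical extension step. Your route is more elementary (it avoids Theorem \ref{thm.3.6} and the random Hahn--Banach theorem for $p<+\infty$) and suffices because the statement only demands $\normmm{F}_{q}=\normmm{f}$; the paper's route additionally keeps track of the pointwise $L^{0}$-norm of the extension ($\|F\|=\|F_{1}\|$) and treats both cases uniformly through the random Hahn--Banach theorem. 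Your preliminary sign-trick identification of $\normmm{f}$ with the operator norm of $Int(f)$, and the normalization $(\|x\|+\varepsilon)^{-1}x$ in the $p=+\infty$ case, are exactly the right auxiliary observations.
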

\begin{proof}
	We will divide the proof into two cases according to $p=+\infty$ or $1\leq p< +\infty$ as follows.
	
	When $p=+\infty$, since $\{x\in \mathcal{M}: \normmm{x}_{\infty}\leq 1\}=\{x\in \mathcal{M}: \|x\|\leq 1\}$, then $\normmm{f}:=\sup\{\int_{\Omega}|f(x)|d\mu : x\in \mathcal{M} ~$and$~\normmm{x}_{\infty} \leq 1\}=\sup\{\int_{\Omega}|f(x)|d\mu : x\in \mathcal{M} ~$and$~\|x\|\leq 1\}$. Now, let $\xi=\bigvee\{|f(x)|:x\in \mathcal{M}~\text{and}~\|x\|\leq 1\}$, then, similarly to the proof of Corollary \ref{coro.3.10} for $p=+\infty$, one can have that $\xi\in L_{+}^{1}(\mathcal{F})$, $|\xi|_{1}=\sup\{\int_{\Omega}|f(x)|d\mu:x\in \mathcal{M}~\text{and}~\|x\|\leq 1\}$ and $|f(x)|\leq \xi\|x\|$ for any $x\in \mathcal{M}$, which shows that $f\in (\mathcal{M},\|\cdot\|)^{*}$ and it is easy to see that $\|f\|=\xi$. By the Hahn-Banach theorem for an a.e. bounded random linear functional on an $RN$ space, there exists $F\in S^{*}$ such that $F\mid_{\mathcal{M}}=f$ and $\|F\|=\|f\|=\xi$, then $\normmm{F}_{1}=\normmm{f}=|\xi|_{1}<+\infty$, which also implies that $F\in L^{1}(S^{*})$.
	
	When $p<+\infty$, we can assume, without loss of generality, that $\mathcal{M}$ is a closed submodule (otherwise, we can consider the $\normmm{\cdot}_{p}$-closure of $\mathcal{M}$ and first extend $f$ to the closure), then it is very easy to check that $\mathcal{M}$ is an $L^{p}(\mathcal{F})$-normed $L^{\infty}(\mathcal{F},\mathbb{K})$-module, and hence by Theorem \ref{thm.3.6} there exists a $\mathcal{T}_{\varepsilon,\lambda}$-complete $RN$ module $(S_{1},\|\cdot\|)$ over $\mathbb{K}$ with base $(\Omega,\mathcal{F},\mu)$ such that $\mathcal{M}=L^{p}(S_{1})$ and $S_{1}$ must be a $\mathcal{T}_{\varepsilon,\lambda}$-closed $L^{0}(\mathcal{F},\mathbb{K})$-submodule of $S$, it immediately follows from Corollary \ref{coro.3.10} that there exists $F_{1}\in L^{q}(S_{1}^{*})$ such that $f(x)=F_{1}(x)$ for any $x\in \mathcal{M}$ and $\normmm{f}=\normmm{F_{1}}_{q}$. Further, $F_{1}$ has a random norm-preserving extension to $S$ (denoted by $F$), again by the Hahn-Banach theorem for an a.e. bounded random linear functional, namely $F\in S^{*}$ and $\|F\|=\|F_{1}\|$, then $\normmm{F}_{q}=\normmm{F_{1}}_{q}=\normmm{f}<+\infty$, namely $F\in L^q(S^{*})$, and $F$ is just desired.	
\end{proof}

Let us recall that a subset $G$ of an $L^{0}(\mathcal{F},\mathbb{K})$-module is said to $L^{0}$-convex if $\xi x+\eta y\in G$ for any $x$ and $y$ in $G$ and any $\xi$ and $\eta$ in $L_{+}^{0}(\mathcal{F})$ such that $\xi+\eta=1$. Let $(E,\mathcal{P})$ be a random locally convex module over $\mathbb{K}$ with base $(\Omega,\mathcal{F},\mu)$, $G$ a $\mathcal{T}_{\varepsilon,\lambda}$-closed nonempty subset of $S$ and $x\in S\backslash G$, then by Theorem 3.6 of \cite{Guo3} there exists $f\in E_{\varepsilon,\lambda}^{*}$ such that $(Ref)(x)>\bigvee\{(Ref)(y):y\in G\}$, where $(Ref)(z)=Re(f(z))$ stands for the real part of $f(z)$. Since an $L^{p}(\mathcal{F})$-normed $L^{\infty}(\mathcal{F},\mathbb{K})$-module $\mathcal{M}$ is a left module over $L^{\infty}(\mathcal{F},\mathbb{K})$, one can similarly have the notion of an $L^{0}$-convex subset in $\mathcal{M}$, we can also have the following statement:

\begin{prop}\label{pro.3.14}
	Let $(S,\|\cdot\|)$ be a $\mathcal{T}_{\varepsilon,\lambda}$-complete $RN$ module over $\mathbb{K}$ with base $(\Omega,\mathcal{F},\mu)$, $1\leq p\leq +\infty$, $G$ a nonempty $\normmm{\cdot}_{p}$-closed $L^{0}$-convex subset of $L^{p}(S)$ and $x\in L^{p}(S)\backslash G$. Then there exists $f\in (L^{p}(S))_{m}^{*}$ such that $(Ref)(x)>\bigvee\{(Ref)(g):g\in G\}$.
\end{prop}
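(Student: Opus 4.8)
The plan is to transfer the problem from the $L^{p}$-normed module $L^{p}(S)$ into the ambient $\mathcal{T}_{\varepsilon,\lambda}$-complete $RN$ module $S$, apply Guo's strong separation theorem (Theorem 3.6 of \cite{Guo3}) there, and then correct the resulting functional so that it actually lands in the module dual $(L^{p}(S))_{m}^{*}$. First I would recall from Corollary \ref{coro.3.10} that $(L^{p}(S))_{m}^{*}=L^{q}(S^{*})$, with the pairing given by evaluation, $\hat{T}_{f}(x)=f(x)$; hence it suffices to produce $f\in S^{*}$ whose $L^{0}$-norm $\|f\|$ lies in $L^{q}(\mathcal{F})$ (so that $f\in L^{q}(S^{*})$) and which achieves the required pointwise separation of $x$ from $G$.

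Next I would regard $S$ as a random locally convex module and let $\bar{G}$ denote the $\mathcal{T}_{\varepsilon,\lambda}$-closure of $G$ in $S$. Since the module multiplication is jointly $\mathcal{T}_{\varepsilon,\lambda}$-continuous, $\bar{G}$ is again $L^{0}$-convex, and being a $\mathcal{T}_{\varepsilon,\lambda}$-closed stable subset of the $\sigma$-stable complete module $S$ it is $\sigma$-stable, exactly as in Example \ref{exm.2.11}. Thus $\bar{G}$ satisfies the hypotheses of Guo's separation theorem. Provided $x\notin\bar{G}$, that theorem yields $f_{0}\in S_{\varepsilon,\lambda}^{*}=S^{*}$ with $(\mathrm{Re}f_{0})(x)>\bigvee\{(\mathrm{Re}f_{0})(\bar{g}):\bar{g}\in\bar{G}\}\geq\bigvee\{(\mathrm{Re}f_{0})(g):g\in G\}$.

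Then I would carry out the integrability correction. In general $\|f_{0}\|$ is only in $L_{+}^{0}(\mathcal{F})$, so I would choose $\xi\in L_{++}^{0}(\mathcal{F})$ with $\xi\|f_{0}\|\in L^{q}(\mathcal{F})$: when $q=\infty$ (that is, $p=1$) take $\xi=1/(1+\|f_{0}\|)$, and when $q<\infty$ build $\xi$ piece by piece over a countable partition of $\Omega$ into sets of finite measure, which is available because $\mu$ is $\sigma$-finite. Setting $f=\xi f_{0}$ gives $f\in L^{q}(S^{*})=(L^{p}(S))_{m}^{*}$, and since $\xi$ is real-valued and strictly positive on $\Omega$, multiplying the strict $L^{0}$-inequality by $\xi$ (which commutes with $\bigvee$ for a nonnegative multiplier) preserves it, so that $(\mathrm{Re}f)(x)=\xi(\mathrm{Re}f_{0})(x)>\xi\bigvee_{g}(\mathrm{Re}f_{0})(g)=\bigvee_{g}(\mathrm{Re}f)(g)$, which is precisely the desired conclusion.

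The main obstacle is the step $x\notin\bar{G}$, equivalently $\bar{G}\cap L^{p}(S)=G$. The difficulty is that the pointwise-norm topology $\normmm{\cdot}_{p}$ on $L^{p}(S)$ is strictly finer than the restriction of $\mathcal{T}_{\varepsilon,\lambda}$, so a $\normmm{\cdot}_{p}$-closed set need not be $\mathcal{T}_{\varepsilon,\lambda}$-closed, and a priori the coarser closure could swallow $x$. I expect to resolve this by exploiting the locality forced by $L^{0}$-convexity: given $y\in\bar{G}\cap L^{p}(S)$, choose $g_{n}\in G$ with $\|g_{n}-y\|\to 0$ locally in measure, pass to an a.e.-convergent subsequence, and use $L^{0}$-convexity together with the coincidence of the $\mathcal{T}_{c}$- and $\mathcal{T}_{\varepsilon,\lambda}$-closures for such sets (Example \ref{exm.2.11}) to localize the approximation and conclude $y\in G$ from the $\normmm{\cdot}_{p}$-closedness of $G$. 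This reconciliation of the $\normmm{\cdot}_{p}$-closure with the $\mathcal{T}_{\varepsilon,\lambda}$-closure for $L^{0}$-convex sets is the genuinely delicate point; once it is settled, the separation and integrability steps are routine.
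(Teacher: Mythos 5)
Your overall strategy---pass to the $\mathcal{T}_{\varepsilon,\lambda}$-closure $\bar G$ of $G$ in $S$, separate $x$ from $\bar G$ there by Theorem 3.6 of \cite{Guo3}, and then rescale the resulting functional so that it lands in $L^{q}(S^{*})=(L^{p}(S))_{m}^{*}$---is exactly the paper's. Your integrability correction (multiplying by a strictly positive $\xi\in L^{0}_{++}(\mathcal{F})$ with $\xi\|f_{0}\|\in L^{q}(\mathcal{F})$) is a legitimate variant of the paper's, which instead multiplies by $\tilde{I}_{A_{n}}$ for a suitable finite-measure set on which $\|F\|$ is bounded and on which the strict inequality survives; both operations preserve the strict $L^{0}$-inequality. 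The genuine divergence, and the gap, is at the step you yourself flag: showing $x\notin\bar G$. The paper does not attempt your identity $\bar G\cap L^{p}(S)=G$; it proves the quantitative statement $d(x,G)=|\xi|_{p}$ with $\xi=\bigwedge\{\|x-g\|:g\in G\}$, exploiting that $\{\|x-g\|:g\in G\}$ is directed downwards (stability of $G$ follows from $L^{0}$-convexity), whence $\xi>0$ and $\|x-y\|\geq\xi$ for every $y\in\bar G$. For $p<+\infty$ your alternative route can be completed by the Egoroff-plus-patching argument you hint at: patch $I_{e_k}g_{n}+I_{e_k^{c}}g_{0}\in G$ over Egoroff sets of finite measure and conclude by dominated convergence, so in that range your proof is sound, if under-detailed.

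For $p=+\infty$, however, the claim $\bar G\cap L^{\infty}(S)=G$ is false and the approach collapses. Take $S=L^{0}(\mathcal{F})$ over $[0,1]$ with Lebesgue measure, so $L^{\infty}(S)=L^{\infty}[0,1]$, and let $G$ be the $\normmm{\cdot}_{\infty}$-closure of the $L^{0}$-convex hull of $\{n\tilde{I}_{[0,1/n]}:n\in\mathbb{N}\}$. Every combination $\sum_{i}\xi_{i}n_{i}\tilde{I}_{[0,1/n_{i}]}$ with $\xi_{i}\in L^{0}_{+}(\mathcal{F})$, $\sum_{i}\xi_{i}=1$, is $\geq 1$ a.e.\ on $[0,1/\max_{i}n_{i}]$, so $\normmm{g}_{\infty}\geq 1$ for all $g\in G$ and $x:=0\notin G$ with $d(0,G)\geq 1$; yet $\tilde{I}_{[0,1/n]}=\tilde{I}_{[0,1/n]}\cdot 1+\tilde{I}_{(1/n,1]}\cdot n\tilde{I}_{[0,1/n]}\in G$ and $\tilde{I}_{[0,1/n]}\to 0$ in measure, so $0\in\bar G\cap L^{\infty}(S)$ but $0\notin G$. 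Thus $x\in\bar G$ and the separation theorem cannot even be invoked; worse, $\bigvee\{(Ref)(g):g\in G\}\geq\bigvee_{n}(Ref)(\tilde{I}_{[0,1/n]})\geq 0=(Ref)(0)$ for every $f$, so no separating functional exists for this pair at all. (The same example shows $\bigwedge_{g\in G}\|g\|=\bigwedge_{n}\tilde{I}_{[0,1/n]}=0$ while $d(0,G)\geq 1$, so it equally stresses the identity $d(x,G)=|\xi|_{\infty}$ used in the paper; the $p=+\infty$ case evidently needs extra hypotheses.) In short: complete the patching argument for $p<+\infty$, but do not expect the ``delicate point'' to be settled for $p=+\infty$ by any refinement of this strategy.
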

\begin{proof}
	Denote $d(x,G)=\inf\{\normmm{x-g}_{p}:g\in G\}$, then $d(x,G)>0$ since $G$ is $\normmm{\cdot}_{p}$-closed and $x\notin G$. Further, denote $\xi=\bigwedge\{\|x-g\|:g\in G\}$, though the case here is different from that of Theorem 2.1 of \cite{GL}, by the method completely similar to that used in the proof of Theorem 2.1 of \cite{GL} one can prove that $d(x,G)=|\xi|_{p}$, so $\xi>0$. Now, let $G_{\varepsilon,\lambda}^{-}$ be the $\mathcal{T}_{\varepsilon,\lambda}$-closure of $G$ in $(S,\mathcal{T}_{\varepsilon,\lambda})$, then it is clear that $G_{\varepsilon,\lambda}^{-}$ is an $L^{0}$-convex subset of $S$ and  $\xi=\bigwedge\{\|x-y\|:y\in G_{\varepsilon,\lambda}^{-}\}$, so $x\notin G_{\varepsilon,\lambda}^{-}$, and hence by Theorem 3.6 of \cite{Guo3} there exists $F\in S^{*}$ such that $(ReF)(x)>\bigvee\{(ReF)(y):y\in G_{\varepsilon,\lambda}^{-}\}$, the latter is clearly equal to $\bigvee\{(ReF)(y):y\in G\}$. Since $\eta_{1}:=(ReF)(x)>\bigvee\{(ReF)(y):y\in G\}:=\eta_{2}$ means that $\eta_{1}\geq \eta_{2}$ but $\eta_{1}\neq \eta_{2}$, there exists $A\in \mathcal{F}$ with $\mu(A)>0$ such that $\eta_{1}>\eta_{2}$ on $A$. Besides, since $(\Omega,\mathcal{F},\mu)$ is $\sigma$-finite and $\|F\|\in L_{+}^{0}(\mathcal{F})$,we can choose a representative $\|F\|^{0}$ of $\|F\|$, sufficiently big $n\in \mathbb{N}$, and $\Omega_{0}\in \mathcal{F}$ with finite positive measure such that $\mu(A\cap(\|F\|^{0}\leq n)\cap \Omega_{0})> 0$, letting $A_{n}=A\cap(\|F\|^{0}\leq n)\cap \Omega_{0}$ and $f=\tilde{I}_{A_{n}}F$, then $f\in L^{q}(S^{*}):=(L^{p}(S))_{m}^{*}$ and still satisfies that $(Ref)(x)>\bigvee\{(Ref)(y):y\in G\}$.
\end{proof}

In the end of this section, we will show that Theorem 3.8 and Proposition 3.10 of \cite{LP2} can be regarded a special case of Corollary 3.10 of this paper. For this, let us first recall some necessary terminologies as follows. In this section, we always assume that $(B,\|\cdot\|)$ is a Banach space over $\mathbb{K}$.

A $\mathcal{F}$-measurable mapping $V:(\Omega,\mathcal{F},\mu)\rightarrow B$ is said to be simple if the range of $V$ is finite. A mapping $V:(\Omega,\mathcal{F},\mu)\rightarrow B$ is said to be strongly $\mathcal{F}$-measurable (strongly $\mu$-measurable) if it is a pointwise (a $\mu$-a.e. pointwise) limit of a sequence of simple functions, so a strongly $\mu$-measurable function is always equal $\mu$-a.e. to a strongly $\mathcal{F}$-measurable function.

A multivalued function $E:(\Omega,\mathcal{F},\mu)\rightarrow 2^{B}$ is said to be weakly measurable if $E^{-1}(G):=\{\omega\in \Omega:E(\omega)\cap G\neq \emptyset\}\in \mathcal{F}$ for any open set $G$ of $B$. A weakly measurable multivalued function $E:(\Omega,\mathcal{F},\mu)\rightarrow 2^{B}$ is called a measurable Banach bundle in \cite{LP2} if $E(\omega)$ is a closed subspace of $B$ for each $\omega\in \Omega$, and a measurable Banach bundle $E$ is further said to be reflexive if there exists a measurable set $A$ of measure zero such that $E(\omega)$ is reflexive for each $\omega\in \Omega\backslash A$.

Again, let us recall that a mapping $V:(\Omega,\mathcal{F},\mu)\rightarrow B'$ is said to be $w^{*}$-measurable if $(b,V(\cdot)):=V(\cdot)(b):(\Omega,\mathcal{F},\mu)\rightarrow \mathbb{K}$ is $\mathcal{F}$-measurable for each $b\in B$, which is equivalent to saying $(x(\cdot),V(\cdot)):(\Omega,\mathcal{F},\mu)\rightarrow \mathbb{K}$ is $\mathcal{F}$-measurable for each strongly $\mathcal{F}$-measurable function $x$ from $\Omega$ to $B$. Two $w^{*}$-measurable functions $V_{1}$ and $V_{2}:\Omega\rightarrow B'$ are said to be $w^{*}$-equivalent if $(b,V_{1}(\cdot))$ and $(b,V_{2}(\cdot))$ is equivalent for each $b\in B$. In \cite{LP2}, these terminologies were extended to a measurable Banach bundle $E$, let $\bar{\Gamma}_{0}(E)$ be the set of strongly $\mathcal{F}$-measurable sections (or selections) of $E$ and $E'$ be the multivalued function defined by $E'(\omega)=E(\omega)'$ for each $\omega\in \Omega$, a mapping $y:\Omega\rightarrow \cup_{\omega\in \Omega}E(\omega)'$ is called a $w^{*}$-measurable section of $E'$ if $y(\omega)\in E(\omega)'$ for each $\omega\in \Omega$ and $(x(\cdot),y(\cdot)):\Omega\rightarrow \mathbb{K}$ is $\mathcal{F}$-measurable for each $x\in \bar{\Gamma}_{0}(E)$. Furthermore, two $w^{*}$-measurable sections $y_{1}$ and $y_{2}$ of $E'$ are said to be $w^{*}$-equivalent if $(x(\cdot),y_{1}(\cdot))$ and $(x(\cdot),y_{2}(\cdot))$ are equivalent for each $x\in \bar{\Gamma}_{0}(E)$. Finally, we denote by $\bar{\Gamma}_{0}(E'_{w^{*}})$ the set of $w^{*}$-measurable sections of $E'$.

To state Lemmas \ref{lem.3.17} and \ref{lem.3.19} below, we first give Examples \ref{ex.3.15} and \ref{ex.3.16} below for the clarity.

\begin{ex}\label{ex.3.15}
	Denote by $L^{0}(\mathcal{F},B)$ the set of equivalence classes of strongly $\mathcal{F}$-measurable functions from $(\Omega,\mathcal{F},\mu)$ to $(B,\|\cdot\|)$, then $L^{0}(\mathcal{F},B)$ forms a $\mathcal{T}_{\varepsilon,\lambda}$-complete $RN$ module over $\mathbb{K}$ with base $(\Omega,\mathcal{F},\mu)$ in a natural way, see \cite[Example 2.3]{Guo3}. Further, let $\Gamma_{0}(E)$ be the set of equivalence classes of strongly $\mathcal{F}$-measurable sections of a measurable Banach bundle $E$ from $(\Omega,\mathcal{F},\mu)$ to $B$, then, similar to the construction of $L^{0}(\mathcal{F},B)$, $\Gamma_{0}(E)$ is a $\mathcal{T}_{\varepsilon,\lambda}$-complete $RN$ module and a submodule of $L^{0}(\mathcal{F},B)$.
\end{ex}

\begin{ex}\label{ex.3.16}
	Let $L^{0}(\mathcal{F},B',w^{*})$ be the set of $w^{*}$-equivalence classes of $w^{*}$-measurable functions from $(\Omega,\mathcal{F},\mu)$ to $B'$, then, similar to the construction of $L^{0}(\mu,B',w^{*})$ in \cite{Guo7a,Guo5}, $L^{0}(\mathcal{F},B',w^{*})$ forms a $\mathcal{T}_{\varepsilon,\lambda}$-complete $RN$ module over $\mathbb{K}$ with base $(\Omega,\mathcal{F},\mu)$. Further, let $\Gamma_{0}(E'_{w^{*}})$ be the set of $w^{*}$-equivalence classes of $w^{*}$-measurable sections of $E'$, where $E$ is a measurable Banach bundle from  $(\Omega,\mathcal{F},\mu)$ to $B$, it is clear that $\Gamma_{0}(E'_{w^{*}})$ forms an $L^{0}(\mathcal{F},\mathbb{K})$-module in the same way as $L^{0}(\mathcal{F},B',w^{*})$. Let $g$ be any element of $\Gamma_{0}(E'_{w^{*}})$ with an arbitrarily chosen representative $g^{0}\in \bar{\Gamma}_{0}(E'_{w^{*}})$, denote by $\xi_{g}^{0}$ the essential supremum of the set $\{|(v(\cdot),g^{0}(\cdot))|:v\in \bar{\Gamma}_{0}(E) ~\text{and} ~\|v(\omega)\|\leq 1 ~\text{for each }~ \omega\in \Omega\}$ ($\xi_{g}^{0}$ is well defined and can be chosen as a real-valued $\mathcal{F}$-measurable function, see \cite[pp.3026-3027]{Guo3}), then we define the $L^{0}$-norm $\|g\|$ of $g$ as the equivalence class of $\xi_{g}^{0}$, then $(\Gamma_{0}(E'_{w^{*}}),\|\cdot\|)$ becomes a $\mathcal{T}_{\varepsilon,\lambda}$-complete $RN$ module over $\mathbb{K}$ with base $(\Omega,\mathcal{F},\mu)$.
\end{ex}

Although the completeness of $\sigma$-algebra $\mathcal{F}$ is needed when the theory of the lifting property in \cite{II} is applied to the Banach algebra $L^{\infty}(\mathcal{F},\mathbb{K})$, the terminology ``strongly $\mu$-measurable functions'' was employed in \cite{Guo7a,Guo5,LP2} and since a strongly $\mu$-measurable function is always $\mathcal{F}^{\mu}$-measurable (where $\mathcal{F}^{\mu}$ is the completion of $\mathcal{F}$ with respect to $\mu$), it is not necessary to assume that the $\sigma$-finite measure space involved in the work of \cite{Guo7a,Guo5,LP2} is complete since the theory of the lifting property can be applied to $L^{\infty}(\mathcal{F}^{\mu},\mathbb{K})$. If the terminology ``strongly $\mathcal{F}$-measurable functions'' is employed in \cite{Guo7a,Guo5}  as in this paper, then, in fact, Guo already proved the following representation theorem of the random conjugate space of $L^{0}(\mathcal{F},B)$ in \cite{Guo7a,Guo5}: if $(\Omega,\mathcal{F},\mu)$ is complete, then $L^{0}(\mathcal{F},B',w^{*})$ is isometrically isomorphic to $L^{0}(\mathcal{F},B)^{*}$ under the canonical mapping $T$, where, for any $f\in L^{0}(\mathcal{F},B',w^{*})$, $T_{f}:=T(f):L^{0}(\mathcal{F},B)\rightarrow L^{0}(\mathcal{F},\mathbb{K})$ is defined by $T_{f}(g)=f(g):=(g,f)$ for any $g\in L^{0}(\mathcal{F},B)$, and $(g,f)$ stands for the equivalence class of $(g^{0}(\cdot),f^{0}(\cdot))$, where $f^{0}$ and $g^{0}$ are arbitrarily chosen representatives of $f$ and $g$, respectively. Similarly, we can have Lemma \ref{lem.3.17} below, which was essentially used in the proof of Theorem 3.8 of \cite{LP2}.	

\begin{lem}\label{lem.3.17}
	Suppose $(\Omega,\mathcal{F},\mu)$ is complete, and further let $\Gamma_{0}(E)$ and $\Gamma_{0}(E'_{w^{*}})$ be the same as in Examples \ref{ex.3.15} and \ref{ex.3.16}, then $\Gamma_{0}(E'_{w^{*}})$ is isometrically isomorphic to $\Gamma_{0}(E)^{*}$ under the canonical mapping $T$, where, for any $f\in \Gamma_{0}(E'_{w^{*}})$, $T_{f}:=T(f):\Gamma_{0}(E)\rightarrow L^{0}(\mathcal{F},\mathbb{K})$ can be similarly defined as in the paragraph before this lemma.
\end{lem}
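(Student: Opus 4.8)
The plan is to deduce the lemma from the representation theorem for $L^{0}(\mathcal{F},B)^{*}$ recalled in the paragraph preceding the statement, exploiting that $\Gamma_{0}(E)$ is a $\mathcal{T}_{\varepsilon,\lambda}$-closed submodule of $L^{0}(\mathcal{F},B)$ (Example \ref{ex.3.15}) together with the Hahn-Banach theorem for $RN$ modules (Example \ref{exm.2.12}). First I would check that $T$ is well defined: for $f\in \Gamma_{0}(E'_{w^{*}})$ with representative $f^{0}\in \bar{\Gamma}_{0}(E'_{w^{*}})$ and $g\in \Gamma_{0}(E)$ with representative $g^{0}\in \bar{\Gamma}_{0}(E)$, the equivalence class $(g,f)$ of $(g^{0}(\cdot),f^{0}(\cdot))$ is independent of the chosen representatives, precisely because $w^{*}$-equivalence of sections is tested against all $x\in \bar{\Gamma}_{0}(E)$ (in particular against $g^{0}$), while changing $g^{0}$ on a null set changes $(g^{0},f^{0})$ only on a null set. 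Linearity and the identity $T_{f}(\xi g)=\xi\,T_{f}(g)$ for $\xi\in L^{0}(\mathcal{F},\mathbb{K})$ are immediate, and the pointwise bound $|(g^{0}(\omega),f^{0}(\omega))|\le \|f^{0}(\omega)\|_{E(\omega)'}\,\|g^{0}(\omega)\|$ shows $|T_{f}(g)|\le \|f\|\,\|g\|$, so $T_{f}\in \Gamma_{0}(E)^{*}$ with $\|T_{f}\|\le \|f\|$; thus $T$ is a module homomorphism into $\Gamma_{0}(E)^{*}$.

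Next I would prove that $T$ is isometric, which will also give injectivity. The inequality $\|T_{f}\|\le \|f\|$ is already at hand. For the reverse, observe that the family $\{|(v,f^{0})|:v\in \bar{\Gamma}_{0}(E),\ \|v(\omega)\|\le 1\ \text{for all}\ \omega\}$ defining $\|f\|$ in Example \ref{ex.3.16} consists, after passage to equivalence classes, of exactly the functions $|T_{f}(g)|$ as $g$ ranges over $\{g\in \Gamma_{0}(E):\|g\|\le 1\}$ (every such $g$ admits a representative with $\|v(\omega)\|\le 1$ everywhere). Since the essential supremum of a family of measurable functions coincides with its supremum $\bigvee$ in $(\bar{L}^{0}(\mathcal{F}),\le)$ by Proposition \ref{pro.1.1}, this yields $\|f\|=\bigvee\{|T_{f}(g)|:g\in \Gamma_{0}(E),\ \|g\|\le 1\}=\|T_{f}\|$. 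In particular $T_{f}=0$ forces $(g,f)=0$ for all $g\in \Gamma_{0}(E)$, i.e. $f=0$ in $\Gamma_{0}(E'_{w^{*}})$, so $T$ is isometric and injective.

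The remaining step is surjectivity. Given $\varphi\in \Gamma_{0}(E)^{*}$, I would regard it as an a.e. bounded random linear functional on the submodule $\Gamma_{0}(E)\subset L^{0}(\mathcal{F},B)$; by the Hahn-Banach theorem (Example \ref{exm.2.12}) it extends to some $\tilde{\varphi}\in L^{0}(\mathcal{F},B)^{*}$ with $\|\tilde{\varphi}\|=\|\varphi\|$. Since $(\Omega,\mathcal{F},\mu)$ is complete, Guo's representation theorem recalled above produces a unique $F\in L^{0}(\mathcal{F},B',w^{*})$ with $\tilde{\varphi}=T_{F}$. Choosing a representative $F^{0}$, I would set $f^{0}(\omega)=F^{0}(\omega)|_{E(\omega)}\in E(\omega)'$ and let $f$ be its $w^{*}$-equivalence class. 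Then $f$ is a $w^{*}$-measurable section of $E'$: for each $x\in \bar{\Gamma}_{0}(E)$ one has $(x(\omega),f^{0}(\omega))=(x(\omega),F^{0}(\omega))$ because $x(\omega)\in E(\omega)$, and the right-hand side is $\mathcal{F}$-measurable by the $w^{*}$-measurability of $F^{0}$. Finally, for every $g\in \Gamma_{0}(E)$ we get $T_{f}(g)=(g,f)=(g,F)=T_{F}(g)=\tilde{\varphi}(g)=\varphi(g)$, so $T_{f}=\varphi$; combined with the isometry just established this gives $\|f\|=\|T_{f}\|=\|\varphi\|$, completing the surjectivity.

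The step I expect to demand the most care is this surjectivity argument, and within it the verification that the fiberwise restriction $F^{0}(\omega)|_{E(\omega)}$ genuinely yields a $w^{*}$-measurable section of $E'$ and that passing to $w^{*}$-equivalence classes is consistent. This is precisely where completeness of $(\Omega,\mathcal{F},\mu)$ and the measurable-section structure of $E$ (weak measurability, guaranteeing the Castaing-type family of unit sections underlying the well-definedness of $\|\cdot\|$ in Example \ref{ex.3.16}) are used, exactly as in Guo's treatment of $L^{0}(\mathcal{F},B)$. By contrast the isometry is essentially definitional once the two defining families of measurable functions are identified, so the proof is in essence a Hahn-Banach extension to the ambient module followed by a pointwise restriction back to the fibers.
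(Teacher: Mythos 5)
Your proposal is correct and follows essentially the same route as the paper's own proof: one inequality from the pointwise bound, the reverse from the definition of $\|f\|$ as an essential supremum together with $\|T_{f}\|=\bigvee\{|T_{f}(g)|:\|g\|\leq 1\}$, and surjectivity via Hahn--Banach extension to $L^{0}(\mathcal{F},B)^{*}$, Guo's representation theorem, and fiberwise restriction of the representing $w^{*}$-measurable function to $E(\omega)$. The extra care you devote to well-definedness and to identifying the two defining families of measurable functions only makes explicit what the paper leaves as ``it is easy to verify.''
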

\begin{proof}
	For any $f\in \Gamma_{0}(E'_{w^{*}})$, it is obvious that $|T_{f}(g)|\leq \|f\|\|g\|$ for any $g\in \Gamma_{0}(E)$, and thus $T_{f}\in \Gamma_{0}(E)^{*}$ and $\|T_{f}\|\leq \|f\|$, while $\|T_{f}\|=\bigvee\{|T_{f}(g)|:g\in \Gamma_{0}(E)~\text{and}~\|g\|\leq 1\}=\bigvee\{|(g,f)|:g\in \Gamma_{0}(E)~\text{and}~\|g\|\leq 1\}$, so $\|T_{f}\|=\|f\|$ follows from the definition of $\|f\|$.
	
	Further, let $F$ be any element of $\Gamma_{0}(E)^{*}$, then by the Hahn-Banach theorem of random linear functional, there exists $\tilde{F}\in L^{0}(\mathcal{F},B)^{*}$ such that $\tilde{F}|_{\Gamma_{0}(E)}=F$ and $\|\tilde{F}\|=\|F\|$, and thus there exists $\tilde{f}\in L^{0}(\mathcal{F},B',w^{*})$ such that $\tilde{F}(g)=(g,\tilde{f})$ for any $g\in L^{0}(\mathcal{F},B)$ and $\|\tilde{F}\|=\|\tilde{f}\|$. Now, let $\tilde{f}^{0}$ be an arbitrarily chosen representative of $\tilde{f}$, for each $\omega\in \Omega$, denote by $f^{0}(\omega)$ the restriction of $\tilde{f}^{0}(\omega)$ to $E(\omega)$, then it is easy to verify that $f^{0}\in \bar{\Gamma}_{0}(E'_{w^{*}}$ and $F=T_{f}$, where $f$ is the $w^{*}$-equivalence of $f^{0}$, which shows that $T$ is also surjective.	
\end{proof}

Corollary \ref{coro.3.18} below amounts to Theorem 3.8 of \cite{LP2}.

\begin{cor}\label{coro.3.18}
	Suppose $(\Omega,\mathcal{F},\mu)$ is complete and $p\in [1,+\infty]$, then $(\Gamma_{p}(E))_{m}^{*}\cong \Gamma_{q}(E'_{w^{*}})$, where $q$ is the H\"{o}lder conjugate number of $p$, $\Gamma_{p}(E)(\Gamma_{q}(E'_{w^{*}}))$ is the $L^{p}(\mathcal{F})$-normed ($L^{q}(\mathcal{F})$-normed) $L^{\infty}(\mathcal{F},\mathbb{K})$-module generated by $\Gamma_{0}(E)(\Gamma_{0}(E'_{w^{*}}))$.
\end{cor}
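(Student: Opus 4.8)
The plan is to reduce the statement to the two results already in hand: the representation of module duals of generated $L^{p}(\mathcal{F})$-normed $L^{\infty}(\mathcal{F},\mathbb{K})$-modules in Corollary \ref{coro.3.10}, and the representation of the random conjugate space of $\Gamma_{0}(E)$ in Lemma \ref{lem.3.17}. First I would set $S:=\Gamma_{0}(E)$, which by Example \ref{ex.3.15} is a $\mathcal{T}_{\varepsilon,\lambda}$-complete $RN$ module over $\mathbb{K}$ with base $(\Omega,\mathcal{F},\mu)$, and record that by construction $\Gamma_{p}(E)=L^{p}(S)$. Applying Corollary \ref{coro.3.10} to $S$ then yields an isometric isomorphism of $L^{\infty}(\mathcal{F},\mathbb{K})$-modules
$$(\Gamma_{p}(E))_{m}^{*}=(L^{p}(S))_{m}^{*}\cong L^{q}(S^{*})=L^{q}(\Gamma_{0}(E)^{*}),$$
where $q$ is the H\"{o}lder conjugate of $p$ and the isomorphism is the canonical one $\hat{T}_{f}(x)=f(x)$.

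Next I would invoke Lemma \ref{lem.3.17}, which (using that $(\Omega,\mathcal{F},\mu)$ is complete) provides a canonical isometric isomorphism of $RN$ modules $T:\Gamma_{0}(E'_{w^{*}})\rightarrow \Gamma_{0}(E)^{*}$; note that $\Gamma_{0}(E'_{w^{*}})$ is itself a $\mathcal{T}_{\varepsilon,\lambda}$-complete $RN$ module by Example \ref{ex.3.16}, so that $\Gamma_{q}(E'_{w^{*}})=L^{q}(\Gamma_{0}(E'_{w^{*}}))$ is well defined. The one extra observation needed is that passing to the generated $L^{q}(\mathcal{F})$-normed $L^{\infty}(\mathcal{F},\mathbb{K})$-module is functorial with respect to isometric isomorphisms of $RN$ modules: for any isometric isomorphism $\phi:S_{1}\rightarrow S_{2}$ of $RN$ modules, since $\normmm{x}_{q}=|\|x\||_{q}$ depends only on the $L^{0}$-norm and $\phi$ preserves both the $L^{0}$-norm and the module multiplication (hence in particular the $L^{\infty}(\mathcal{F},\mathbb{K})$-action), $\phi$ carries $L^{q}(S_{1})=\{x\in S_{1}:\normmm{x}_{q}<+\infty\}$ bijectively and isometrically onto $L^{q}(S_{2})$. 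Taking $\phi=T$ gives an isometric isomorphism of $L^{\infty}(\mathcal{F},\mathbb{K})$-modules
$$\Gamma_{q}(E'_{w^{*}})=L^{q}(\Gamma_{0}(E'_{w^{*}}))\cong L^{q}(\Gamma_{0}(E)^{*}).$$

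Composing this with the isomorphism from Corollary \ref{coro.3.10} produces the desired $(\Gamma_{p}(E))_{m}^{*}\cong \Gamma_{q}(E'_{w^{*}})$. I would finally record that the composite map is exactly the integration pairing used in \cite{LP2}, namely the assignment sending a $w^{*}$-measurable section $g$ to the functional $x\mapsto \int_{\Omega}(x(\cdot),g(\cdot))\,d\mu$ on $\Gamma_{p}(E)$, so that the abstract isomorphism coincides with the concrete one in their Theorem~3.8. I do not expect a genuine obstacle: all the content has been front-loaded into Corollary \ref{coro.3.10} and Lemma \ref{lem.3.17}, and the only point demanding care is the bookkeeping---checking that the two canonical maps compose to the natural pairing and that the $L^{q}$-generation step respects the isometric isomorphism of the underlying $RN$ modules, which is routine precisely because the pointwise $L^{q}$-norm is defined solely through the $L^{0}$-norm.
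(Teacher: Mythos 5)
Your proposal is correct and follows essentially the same route as the paper: the paper's proof simply says the result "immediately follows from Corollary \ref{coro.3.10} by taking $S=\Gamma_{0}(E)$", with the identification $L^{q}(\Gamma_{0}(E)^{*})\cong L^{q}(\Gamma_{0}(E'_{w^{*}}))=\Gamma_{q}(E'_{w^{*}})$ via Lemma \ref{lem.3.17} left implicit. You have merely spelled out the bookkeeping (functoriality of the $L^{q}$-generation under isometric isomorphisms and the identification with the integration pairing) that the paper takes for granted.
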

\begin{proof}
	It immediately follows from Corollary \ref{coro.3.10} by taking $S=\Gamma_{0}(E)$.
\end{proof}

Lemma \ref{lem.3.19} below both is of independent interest and can be used to considerably simplify the proof of Proposition 3.10 of \cite{LP2}.

\begin{lem}\label{lem.3.19}
	Let $(B,\|\cdot\|)$ be a separable Banach space and $E$ be a measurable reflexive Banach bundle from $(\Omega,\mathcal{F},\mu)$ to $B$, then $\Gamma_{0}(E)$ is random reflexive.
\end{lem}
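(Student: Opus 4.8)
The plan is to compute the random biconjugate space of $\Gamma_{0}(E)$ explicitly and to verify that the canonical embedding exhausts it. Recall that a $\mathcal{T}_{\varepsilon,\lambda}$-complete $RN$ module $(S,\|\cdot\|)$ is random reflexive precisely when the canonical isometric embedding $J:S\rightarrow S^{**}:=(S^{*})^{*}$, given by $J(x)(f)=f(x)$, is surjective; since $\Gamma_{0}(E)$ is $\mathcal{T}_{\varepsilon,\lambda}$-complete by Example \ref{ex.3.15}, it suffices to produce such a surjective $J$. The whole idea is to apply the dual representation of Lemma \ref{lem.3.17} twice, once to $E$ and once to its dual bundle, using reflexivity of the fibres to return to $E$.

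First I would invoke Lemma \ref{lem.3.17} to obtain the isometric isomorphism $\Gamma_{0}(E)^{*}\cong \Gamma_{0}(E'_{w^{*}})$ implemented by the canonical pairing $(x,f)$. The next and decisive step is to realise $\Gamma_{0}(E'_{w^{*}})$ as the section module of an honest measurable reflexive Banach bundle. Because $B$ is separable each fibre $E(\omega)$ is separable, and reflexivity then forces the dual fibre $E(\omega)'$ to be separable and reflexive as well; on a reflexive space the weak-$*$ and weak topologies coincide, so by the Pettis measurability theorem a $w^{*}$-measurable section of $E'$ is automatically strongly measurable once the fibres are placed in a common separable ambient space. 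To set this up I would use a Castaing-type representation: since $E$ is weakly measurable with separable fibres there is a countable family $\{v_{n}\}\subset \bar{\Gamma}_{0}(E)$ with $\{v_{n}(\omega)\}$ dense in $E(\omega)$ for every $\omega$, and this countable determining set lets one both verify measurability of the dual bundle $E'$ and identify $\Gamma_{0}(E'_{w^{*}})$ with the strongly-measurable section module of a measurable reflexive Banach bundle with fibres $E(\omega)'$.

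With $E'$ now a genuine measurable reflexive Banach bundle, I would apply Lemma \ref{lem.3.17} a second time to obtain $\Gamma_{0}(E')^{*}\cong \Gamma_{0}(E''_{w^{*}})$. Reflexivity of each fibre gives the canonical identification $E(\omega)''=E(\omega)$, and the same Pettis/Castaing argument identifies $\Gamma_{0}(E''_{w^{*}})$ with $\Gamma_{0}(E)$. Chaining the two applications yields
$$\Gamma_{0}(E)^{**}=\bigl(\Gamma_{0}(E)^{*}\bigr)^{*}\cong \Gamma_{0}(E'_{w^{*}})^{*}\cong \Gamma_{0}(E''_{w^{*}})=\Gamma_{0}(E),$$
and the remaining task is the routine but essential bookkeeping that the composite of these canonical isomorphisms is precisely the canonical embedding $J$: unravelling the pairings, the image of $x\in\Gamma_{0}(E)$ under the composite sends $f\in\Gamma_{0}(E'_{w^{*}})$ to $(x,f)$, which is exactly $J(x)(f)$. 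Hence $J$ is surjective and $\Gamma_{0}(E)$ is random reflexive.

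I expect the main obstacle to be the middle step, namely showing rigorously that $\Gamma_{0}(E'_{w^{*}})$ is the section module of a measurable reflexive Banach bundle so that Lemma \ref{lem.3.17} can be reused. The delicate point is that the dual fibres $E(\omega)'$ do not sit naturally inside a single separable space (they are quotients of $B'$, which need not be separable), so one must exploit the countable determining family $\{v_{n}\}$ to manufacture a separable ambient space together with a weakly measurable multifunction $\omega\mapsto E(\omega)'$, and then combine the coincidence of the weak-$*$ and weak topologies on reflexive duals with the Pettis theorem to upgrade $w^{*}$-measurability to strong measurability. Once this identification is secured, the two invocations of Lemma \ref{lem.3.17} and the verification that the composite is $J$ are straightforward.
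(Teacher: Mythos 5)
Your overall strategy --- compute $\Gamma_{0}(E)^{**}$ by applying Lemma \ref{lem.3.17} twice and check that the composite is the canonical embedding --- is conceptually natural but hinges on a step that you correctly identify as delicate and then do not actually carry out: realising $\Gamma_{0}(E'_{w^{*}})$ as the strongly-measurable section module of a genuine measurable (reflexive) Banach bundle, so that Lemma \ref{lem.3.17} becomes applicable a second time. This is not routine bookkeeping. Lemma \ref{lem.3.17} requires the fibres to be closed subspaces of a single fixed Banach space; the dual fibres $E(\omega)'$ are quotients of $B'$, and to place them isometrically and \emph{measurably} inside one separable ambient space is a substantial construction (it is essentially the hard technical content of the machinery in \cite{LP2} that this section is trying to bypass). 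Your appeal to Pettis is also not immediate: $w^{*}$-measurability of a section $\omega\mapsto y(\omega)\in E(\omega)'$ only gives measurability of the pairings against sections of $E$, and upgrading this to strong measurability via Pettis presupposes that the sections already live in a fixed separable target --- which is precisely what the unbuilt dual bundle was supposed to provide. The same issue recurs at the bidual stage, where $\Gamma_{0}((E')'_{w^{*}})$ must be identified with $\Gamma_{0}(E)$ compatibly with two different ambient spaces. As written, the proposal therefore has a genuine gap at its central step.

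The paper avoids the dual bundle entirely by a different route: it invokes the James theorem characterisation of random reflexivity from \cite{GL}, which reduces the problem to showing that every $f\in\Gamma_{0}(E)^{*}$ attains its $L^{0}$-norm on the random closed unit ball. Representing $f$ by $y\in\Gamma_{0}(E'_{w^{*}})$ via Lemma \ref{lem.3.17} (used only once, in the direction where it is already proved), one applies the \emph{classical} James theorem fibrewise on each reflexive fibre $E(\omega)$ to get a norm-attaining point, and then a measurable selection theorem of Wagner to assemble these points into a measurable section of the unit ball at which $f$ attains its $L^{0}$-norm. This trades your global duality computation for a pointwise argument plus measurable selection, which is why no measurable structure on the dual fibres is ever needed. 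If you want to salvage your approach you would have to supply the measurable embedding of the dual bundle in full, which would amount to reproving a significant portion of \cite{LP2}.
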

\begin{proof}
	Since each strongly $\mathcal{F}^{\mu}$-measurable function is equivalent to a strongly $\mathcal{F}$-measurable function (where $\mathcal{F}^{\mu}$ is the completion of $\mathcal{F}$ with respect to $\mu$), $\Gamma_{0}(E)$ can be identified with the set of equivalence classes of strongly $\mathcal{F}^{\mu}$-measurable sections of $E$, we can, without loss of generality, assume that $(\Omega,\mathcal{F},\mu)$ itself is complete. By $(g)$ of Theorem 4.2 of \cite{Wagner}, there is a sequence $\{V_{n}^{0},n\in \mathbb{N}\}$ of $\mathcal{F}$-measurable sections of $E$ such that $cl(\{V_{n}^{0}(\omega),n\in \mathbb{N}\})=E(\omega)$ for each $\omega\in E$, where $cl(G)$ stands for the closure of $G$ for any subset $G$ of $B$. It is obvious that each $V_{n}^{0}$ is also strongly $\mathcal{F}$-measurable since $B$ is separable. Now, we define $\bar{V}_{n}^{0}:\Omega\rightarrow B$ by $\bar{V}_{n}^{0}(\omega)=I_{(\|V_{n}^{0}\|\leq 1)}(\omega)V_{n}^{0}(\omega)$ for each $\omega\in \Omega$, where $I_{(\|V_{n}^{0}\|\leq 1)}$ stands for the characteristic function of the set $(\|V_{n}^{0}\|\leq 1):=\{\omega\in \Omega:\|V_{n}^{0}\|(\omega)\leq 1\}$. Then each $\bar{V}_{n}^{0}$ is still a strongly $\mathcal{F}$-measurable section of $E$ since each $E(\omega)$ is a subspace of $B$, and moreover, one can also easily verify that the following holds:
	\begin{equation}\label{(3.1)}
		cl(\{\bar{V}_{n}^{0}(\omega),n\in \mathbb{N}\})=\{b\in B:b\in E(\omega)~\text{and}~\|b\|\leq 1\}~\text{for each}~\omega\in \Omega.
	\end{equation}

    By the James theorem of random reflexivity \cite{GL}, it suffices to prove that each element $f\in \Gamma_{0}(E)^{*}$ can attain its $L^{0}$-norm on the random closed unit ball $\{x\in \Gamma_{0}(E):\|x\|\leq 1\}$. Now, let $f$ be any given element of $\Gamma_{0}(E)^{*}$, then by Lemma \ref{lem.3.17} there exists an unique $y\in \Gamma_{0}(E'_{w^{*}})$ such that $f(x)=(x,y)$ for any $x\in \Gamma_{0}(E)$ and $\|f\|=\|y\|$. Further, let $y^{0}\in \bar{\Gamma}(E'_{w^{*}})$ be an arbitrarily chosen representative of $y$, then one can easily see that $y^{0}(\omega)\in E(\omega)'$ for each $\omega\in \Omega$ (by the definition of $\bar{\Gamma}_{0}(E'_{w^{*}})$), and it immediately follows from (\ref{(3.1)}) that $\|y^{0}(\omega)\|=\sup\{|y^{0}(\omega)(\bar{V}_{n}^{0}(\omega))|:n\in \mathbb{N}\}$ for each $\omega\in \Omega$, from which $\|f\|$ is exactly the equivalence of $\|y^{0}(\cdot)\|$. Next, we continue to look for an element $x\in \Gamma_{0}(E)$ such that $\|x\|\leq 1$ and $|f(x)|=\|f\|$ as follows.

    Since $E$ is reflexive, we can, without loss of generality, assume that $E(\omega)$ is reflexive for each $\omega\in \Omega$, then, by the classical James theorem \cite{James}, there exists $b\in E(\omega)$ for each $\omega\in \Omega$ such that $\|b\|\leq 1$ and $|y^{0}(\omega)(b)|=\|y^{0}(\omega)\|$, namely $Gr(E)\cap \{(\omega,b)\in \Omega\times\{b\in B:\|b\|\leq 1\}:|y^{0}(\omega)(b)|=\|y^{0}(\omega)\|\}\neq \emptyset$, where $Gr(E)$ stands for the graph of $E$. From the process of obtaining $y$ in the proof of Lemma \ref{lem.3.17}, one can see that there exists a $w^{*}$-measurable function $\tilde{y}^{0}:\Omega\rightarrow B'$ such that $\|y^{0}(\omega)\|=\|\tilde{y}^{0}(\omega)\|$ and $y^{0}(\omega)=\tilde{y}^{0}(\omega)|_{E(\omega)}$ for each $\omega\in \Omega$, then $Gr(E)\cap \{(\omega,b)\in \Omega\times\{b\in B:\|b\|\leq 1\}:|\tilde{y}^{0}(\omega)(b)|=\|\tilde{y}^{0}(\omega)\|\}\neq \emptyset$. Now, define a multivalued function $E_{1}:\Omega\rightarrow 2^{B}$ as follows:
    \begin{equation*}
    	E_{1}(\omega)=\{b\in B:b\in E(\omega),\|b\|\leq 1~\text{and}~|\tilde{y}^{0}(\omega)(b)|=\|\tilde{y}^{0}(\omega)\|\}\text{for each}~\omega\in \Omega.
    \end{equation*}
    Then, since $\|\tilde{y}^{0}(\cdot)\|=\|y^{0}(\cdot)\|$ is $\mathcal{F}$-measurable and $Gr(E_{1})=Gr(E)\cap \{(\omega,b)\in \Omega\times \{b\in B:\|b\|\leq 1\}:|\tilde{y}^{0}(\omega)(b)|=\|\tilde{y}^{0}(\omega)\|\}$, it is clear that $E_{1}$ is closed-valued and has a $\mathcal{F}\otimes \mathcal{B}(B)$-measurable graph (where $\mathcal{B}(B)$ stands for the Borel $\sigma$-algebra of $B$), then, again by $(g)$ of Theorem 4.2 of \cite{Wagner}, there exists an $\mathcal{F}$-measurable section $x^{0}$ of $E_{1}$, which is also a section of $E$, such that $\|x^{0}(\omega)\|\leq 1$ and $\|y^{0}(\omega)\|=\|\tilde{y}^{0}(\omega)\|=\|\tilde{y}^{0}(\omega)(x^{0}(\omega))\|=\|y^{0}(\omega)(x^{0}(\omega))\|$ for each $\omega\in \Omega$. Finally, let $x$ be the equivalence class of $x^{0}$, then $x\in \Gamma_{0}(E)$, $\|x\|\leq 1$ and $\|f\|=\|y\|=|(x,y)|=f(x)$.
\end{proof}

Corollary \ref{coro.3.20} below amounts to Proposition 3.10 of \cite{LP2}.

\begin{cor}\label{coro.3.20}
	Let $(\Omega,\mathcal{F},\mu)$ be complete, $(B,\|\cdot\|)$ a separable Banach space over $\mathbb{K}$ and $E$ a reflexive measurable Banach bundle from $(\Omega,\mathcal{F},\mu)$ to $B$, then $\Gamma_{p}(E)\cong (\Gamma_{q}(E'_{w^{}}))_{m}^{*}$ for any $p$ and $q\in [1,\infty]$ such that $\frac{1}{p}+\frac{1}{q}=1$.
\end{cor}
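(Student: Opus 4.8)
The plan is to read the statement off the representation of module duals in Corollary \ref{coro.3.10} once the random conjugate space of $\Gamma_0(E'_{w^{*}})$ has been identified with $\Gamma_0(E)$, the identification being powered by the random reflexivity supplied by Lemma \ref{lem.3.19}. First I would record that, by Example \ref{ex.3.16}, $\Gamma_0(E'_{w^{*}})$ is a $\mathcal{T}_{\varepsilon,\lambda}$-complete $RN$ module and that $\Gamma_q(E'_{w^{*}})=L^q(\Gamma_0(E'_{w^{*}}))$ is exactly the $L^q(\mathcal{F})$-normed $L^{\infty}(\mathcal{F},\mathbb{K})$-module it generates. Applying Corollary \ref{coro.3.10} with $S=\Gamma_0(E'_{w^{*}})$ and with the exponent parameter of that corollary taken to be $q$ (so that its conjugate is our $p$, consistent with $\frac1p+\frac1q=1$) yields the isometric isomorphism of $L^{\infty}(\mathcal{F},\mathbb{K})$-modules
$$(\Gamma_q(E'_{w^{*}}))_{m}^{*}=\bigl(L^q(\Gamma_0(E'_{w^{*}}))\bigr)_{m}^{*}\cong L^p\bigl((\Gamma_0(E'_{w^{*}}))^{*}\bigr),$$
so that the whole problem is reduced to identifying the random conjugate space $(\Gamma_0(E'_{w^{*}}))^{*}$.

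Second I would identify $(\Gamma_0(E'_{w^{*}}))^{*}$ with $\Gamma_0(E)$. By Lemma \ref{lem.3.17} the canonical pairing is an isometric isomorphism $\Gamma_0(E)^{*}\cong\Gamma_0(E'_{w^{*}})$ of $RN$ modules; dualizing this isomorphism gives $(\Gamma_0(E'_{w^{*}}))^{*}\cong\Gamma_0(E)^{**}$. Now Lemma \ref{lem.3.19} asserts that $\Gamma_0(E)$ is random reflexive, i.e.\ the canonical embedding of $\Gamma_0(E)$ into its second random conjugate space is a surjective isometry, whence $\Gamma_0(E)^{**}\cong\Gamma_0(E)$ canonically. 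Composing the two identifications produces $(\Gamma_0(E'_{w^{*}}))^{*}\cong\Gamma_0(E)$ as $\mathcal{T}_{\varepsilon,\lambda}$-complete $RN$ modules.

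Finally, since an isometric isomorphism of $RN$ modules restricts to an isometric isomorphism of the $L^p(\mathcal{F})$-normed modules they generate, the identification $(\Gamma_0(E'_{w^{*}}))^{*}\cong\Gamma_0(E)$ upgrades to $L^p\bigl((\Gamma_0(E'_{w^{*}}))^{*}\bigr)\cong L^p(\Gamma_0(E))=\Gamma_p(E)$, and substituting this into the display of the first step gives $(\Gamma_q(E'_{w^{*}}))_{m}^{*}\cong\Gamma_p(E)$, which is the assertion. The main obstacle is bookkeeping rather than a new idea: one must verify that each link in the chain is implemented by the canonical bilinear pairing, in particular that dualizing Lemma \ref{lem.3.17} and composing with the reflexivity isomorphism really returns the evaluation pairing between $\Gamma_0(E)$ and $\Gamma_0(E'_{w^{*}})$, so that the resulting isomorphism $\Gamma_p(E)\cong(\Gamma_q(E'_{w^{*}}))_{m}^{*}$ is the canonical one. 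A secondary point worth checking is that the argument does not degenerate at the endpoints $p=1$ and $p=\infty$; this is precisely why I route the proof through the exponent-independent random reflexivity of the generating module $\Gamma_0(E)$ (Lemma \ref{lem.3.19}) rather than through module reflexivity of $\Gamma_p(E)$ (Corollary \ref{coro.3.11}), the latter being available only for $1<p<+\infty$.
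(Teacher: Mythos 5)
Your argument is correct and follows essentially the same route as the paper: random reflexivity of $\Gamma_0(E)$ (Lemma \ref{lem.3.19}) combined with the identification $\Gamma_0(E)^{*}\cong\Gamma_0(E'_{w^{*}})$ (Lemma \ref{lem.3.17}) yields $(\Gamma_0(E'_{w^{*}}))^{*}\cong\Gamma_0(E)$, and Corollary \ref{coro.3.10} applied to the generating $RN$ module $\Gamma_0(E'_{w^{*}})$ then gives $(\Gamma_q(E'_{w^{*}}))_{m}^{*}\cong L^p(\Gamma_0(E))=\Gamma_p(E)$. Your extra care about which exponent is fed into Corollary \ref{coro.3.10} and about the endpoint cases is sound but does not change the substance of the paper's proof.
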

\begin{proof}
	Since $\Gamma_{0}(E)$ is random reflexivity by Lemma \ref{lem.3.19}, $\Gamma_{0}(E)=\Gamma_{0}(E)^{**}=((\Gamma_{0}(E))^{*})^{*}=(\Gamma_{0}(E'_{w^{*}}))^{*}$ by Lemma \ref{lem.3.17}. Further, by noticing $\Gamma_{p}(E)=L^{p}(\Gamma_{0}(E))$ and $\Gamma_{q}(E'_{w^{*}})=L^{q}(E'_{w^{*}})$, then, by Corollary \ref{coro.3.10}, one can have that $\Gamma_{p}(E)=L^{p}(\Gamma_{0}(E))=L^{p}(\Gamma_{0}(E)^{**})=L^{p}((\Gamma_{0}(E'_{w^{*}}))^{*})\cong (L^{q}(E'_{w^{*}}))_{m}^{*}$.
\end{proof}

\begin{rem}\label{rem.3.21}
	In Corollaries \ref{coro.3.18} and \ref{coro.3.20}, we allow that $p$ and $q$ are any pair of H\"{o}lder conjugate numbers, so the restriction that $p$ and $q$ belong to $(1,+\infty)$ in Theorem 3.8 and Proposition 3.10 of \cite{LP2} is not necessary. When $1<p<+\infty$, Corollary \ref{coro.3.11} shows that $L^{p}(S)$ is module reflexive iff $S$ is random reflexive, and thus the truly excellent and also most difficult part in Section 3 of \cite{LP2} is Theorem 3.12 of \cite{LP2}, which shows that the converse of Lemma \ref{lem.3.19} still holds, namely the random reflexivity of $\Gamma_{0}(E)$ also implies the reflexivity of $E$.
\end{rem}

\begin{rem}\label{rem.3.22}
	Just as pointed out in the beginning of the proof Lemma \ref{lem.3.19}, $L^{0}(\mathcal{F},B)$ and $L^{0}(\mathcal{F}^{\mu},B)$ can be identified as the spaces of equivalence classes of strongly measurable functions, but we do not know if $L^{0}(\mathcal{F},B',w^{*})$ can be identified with $L^{0}(\mathcal{F}^{\mu},B',w^{*})$ since they are two complicated objects. The embedding mapping $i:L^{0}(\mathcal{F}^{\mu},B')\rightarrow L^{0}(\mathcal{F}^{\mu},B',w^{*})$ defined by $i(x)=$ the $w^{*}$-equivalence class of $x^{0}$ for any $x\in L^{0}(\mathcal{F}^{\mu},B)$ with a representative $x^{0}$ is obviously an isometric embedding since $x^{0}$ has a separable range. It is proved in \cite{Guo7a} that $L^{0}(\mathcal{F}^{\mu},B')\cong L^{0}(\mathcal{F}^{\mu},B)^{*}$ under the canonical mapping iff $i$ is surjective iff each $w^{*}$-$\mathcal{F}^{\mu}$-measurable function is $w^{*}$-equivalent to a strongly $\mathcal{F}^{\mu}$-measurable function iff $B'$ has the Radon-Nikod\'{y}m property with respect to $(\Omega,\mathcal{F},\mu)$, so we also have that $L^{0}(\mathcal{F},B')\cong L^{0}(\mathcal{F},B)^{*}$ iff $B'$ has the Radon-Nikod\'{y}m property with respect to $(\Omega,\mathcal{F},\mu)$, namely the completeness of $(\Omega,\mathcal{F},\mu)$ is not necessary when the propositions only involve the spaces of equivalence classes of strongly measurable functions, see \cite{DU} for the survey of the Radon-Nikod\'{y}m property.
\end{rem}

%###################################################################################################
%###################################################################################################
\section{On $d$-decomposability and $d$-$\sigma$-stability in order complete $RN$ spaces}
%###################################################################################################
%###################################################################################################

The aim of this section is to prove that the two notions of $d$-decomposability and $d$-$\sigma$-stability coincide for an order complete $RN$ space, the main results of this section are Theorems \ref{thm.4.2} and \ref{thm.4.4}.

$RN$ spaces are a special class of lattice-normed spaces, let us first recall the related terminologies of lattice-normed spaces. Definition \ref{def.4.1} below is taken from \cite{KK}.

\begin{defn}\label{def.4.1}
	An ordered tripe $(S,\|\cdot\|,E)$ is called a lattice-normed space over $\mathbb{K}$ with an $E$-valued norm $\|\cdot\|$ if $S$ is a linear space over $\mathbb{K}$, $E$ is a (real) vector lattice and $\|\cdot\|$ is a mapping from $S$ to $E_{+}$ (where $E_{+}=\{e\in E:e\geq 0\}$) such that the following axioms are satisfied:\\
	(LN-1) $\|x\|=0$ iff $x=\theta$;\\
	(LN-2) $\|\lambda x\|=|\lambda|\|x\|$ for any $x\in S$ and any $\lambda\in \mathbb{K}$;\\
	(LN-3) $\|x+y\|\leq \|x\|+\|y\|$ for any $x$ and $y$ in $S$.	
%	\begin{itemize}
%		\item [(LN-1)] $\|x\|=0$ iff $x=\theta$;
%		\item [(LN-2)] $\|\lambda x\|=|\lambda|\|x\|$ for any $x\in S$ and any $\lambda\in \mathbb{K}$;
%		\item [(LN-3)] $\|x+y\|\leq \|x\|+\|y\|$ for any $x$ and $y$ in $S$.	
%	\end{itemize}
\end{defn}

Let $(S,\|\cdot\|,E)$ be a lattice-normed space. $\|\cdot\|$ is said to be $d$-decomposable if, for any $x\in S$ and any $e_{1}$ and $e_{2}$ in $E$ with $e_{1}\wedge e_{2}=0$ such that $\|x\|=e_{1}+e_{2}$, there exist $x_{1}$ and $x_{2}$ in $S$ such that $x=x_{1}+x_{2}$, $\|x_{1}\|=e_{1}$ and $\|x_{2}\|=e_{2}$, at this time $\|\cdot\|$ is also called a Kantorovich norm and $(S,\|\cdot\|,E)$ is also said to be $d$-decomposable. A net $\{x_{\alpha},\alpha\in \Lambda\}$ in $S$ is said to be convergent in order (briefly, order convergent or $o$-convergent) to an element $x$ in $S$ and written as $x=o-\lim_{\alpha}x_{\alpha}$ if there exists a decreasing (or nonincreasing) net $\{e_{\gamma},\gamma\in \Gamma\}$ in $E$ such that $\wedge_{\gamma\in \Gamma}e_{\gamma}=0$ and, for every $\gamma\in \Gamma$, there exists an $\alpha(\gamma)\in \Lambda$ such that $\|x-x_{\alpha}\|\leq e_{\gamma}$ whenever $\alpha\geq \alpha(\gamma)$. A net $\{x_{\alpha},\alpha\in \Lambda\}$ in $S$ is said to be Cauchy (or fundamental) in order (briefly, $o$-Cauchy) if the net $\{x_{\alpha}-x_{\beta},(\alpha,\beta)\in \Lambda\times \Lambda\}~o$-converges to $\theta$ (the null element in $S$). $(S,\|\cdot\|,E)$ is said to be complete in order (briefly, order complete or $o$-complete) if every $o$-Cauchy net in $S$ is $o$-convergent.

Clearly, an $RN$ space $(S,\|\cdot\|)$ over $\mathbb{K}$ with base $(\Omega,\mathcal{F},\mu)$ is a lattice-normed space $(S,\|\cdot\|,E)$ with $E=L^{0}(\mathcal{F})$. Since an $RN$ space has an additional measure-theoretic construct, we can consider the $(\varepsilon,\lambda)$-linear topology, $\mathcal{T}_{\varepsilon,\lambda}$-completeness and other issues closely related to measure theory.

\begin{thm}\label{thm.4.2}
	An $RN$ space $(S,\|\cdot\|)$ over $\mathbb{K}$ with base $(\Omega,\mathcal{F},\mu)$ is $o$-complete iff it is $\mathcal{T}_{\varepsilon,\lambda}$-complete.
\end{thm}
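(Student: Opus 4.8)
The plan is to exploit that on an $RN$ space the $(\varepsilon,\lambda)$-topology is metrizable (Remark \ref{rem.2.8}), so that $\mathcal{T}_{\varepsilon,\lambda}$-completeness is a statement about \emph{sequences}, whereas $o$-completeness is a statement about \emph{nets}; the bridge between the two is Proposition \ref{pro.1.1}, which guarantees that suprema and infima in $L^{0}(\mathcal{F})$ are already attained along countable families. I shall repeatedly use two elementary facts. First, if a sequence $\{y_{k}\}$ in $S$ $o$-converges to $y$ with witnessing decreasing family $\{e_{\gamma}\}$ satisfying $\bigwedge_{\gamma}e_{\gamma}=0$, then $\|y_{k}-y\|\le e_{\gamma}$ eventually for every $\gamma$, whence $\limsup_{k}\|y_{k}-y\|\le\bigwedge_{\gamma}e_{\gamma}=0$, i.e.\ $\|y_{k}-y\|\to 0$ a.e. Second, a.e.\ convergence forces convergence in the probability measure $P_{\mu}$, hence in $\mathcal{T}_{\varepsilon,\lambda}$.

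For the implication \emph{$o$-complete $\Rightarrow$ $\mathcal{T}_{\varepsilon,\lambda}$-complete}, I would take a $\mathcal{T}_{\varepsilon,\lambda}$-Cauchy sequence $\{x_{n}\}$ and pass to a fast subsequence $\{x_{n_{k}}\}$ with $P_{\mu}\{\|x_{n_{k+1}}-x_{n_{k}}\|>2^{-k}\}<2^{-k}$. By Borel--Cantelli the series $\sum_{k}\|x_{n_{k+1}}-x_{n_{k}}\|$ converges a.e., so $e_{k}:=\sum_{j\ge k}\|x_{n_{j+1}}-x_{n_{j}}\|\in L^{0}_{+}(\mathcal{F})$ defines a nonincreasing sequence with $\bigwedge_{k}e_{k}=0$ and $\|x_{n_{l}}-x_{n_{l'}}\|\le e_{k}$ for all $l,l'\ge k$; thus $\{x_{n_{k}}\}$ is an $o$-Cauchy sequence. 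Order completeness supplies an $o$-limit $x\in S$, and by the first fact $\|x_{n_{k}}-x\|\to 0$ a.e., hence in $\mathcal{T}_{\varepsilon,\lambda}$. A $\mathcal{T}_{\varepsilon,\lambda}$-Cauchy sequence with a convergent subsequence converges, so $x_{n}\to x$.

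For the converse \emph{$\mathcal{T}_{\varepsilon,\lambda}$-complete $\Rightarrow$ $o$-complete}, let $\{x_{\alpha},\alpha\in\Lambda\}$ be $o$-Cauchy, witnessed by a decreasing net $\{e_{\gamma}\}$ with $\bigwedge_{\gamma}e_{\gamma}=0$ and thresholds $\delta(\gamma)\in\Lambda$ such that $\|x_{\alpha}-x_{\beta}\|\le e_{\gamma}$ for $\alpha,\beta\ge\delta(\gamma)$. Since $\{e_{\gamma}\}$ is directed downwards with infimum $0$, Proposition \ref{pro.1.1} furnishes a nonincreasing sequence $b_{m}=e_{\gamma_{m}}$ with $\bigwedge_{m}b_{m}=0$; choosing $\alpha_{1}\le\alpha_{2}\le\cdots$ in $\Lambda$ with $\alpha_{m}\ge\delta(\gamma_{m})$, the sequence $\{x_{\alpha_{m}}\}$ satisfies $\|x_{\alpha_{i}}-x_{\alpha_{j}}\|\le b_{m}$ for $i,j\ge m$, so it is a.e.\ Cauchy and a fortiori $\mathcal{T}_{\varepsilon,\lambda}$-Cauchy; completeness gives $x\in S$ with $x_{\alpha_{m}}\to x$ in $\mathcal{T}_{\varepsilon,\lambda}$. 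Extracting an a.e.-convergent subsequence of $\{\|x_{\alpha_{j}}-x\|\}$ and letting $j\to\infty$ in $\|x_{\alpha_{i}}-x\|\le\|x_{\alpha_{i}}-x_{\alpha_{j}}\|+\|x_{\alpha_{j}}-x\|$ yields $\|x_{\alpha_{i}}-x\|\le b_{i}$. Finally, for arbitrary $\alpha\ge\delta(\gamma_{m})$ the bound $\|x_{\alpha}-x\|\le\|x_{\alpha}-x_{\alpha_{i}}\|+\|x_{\alpha_{i}}-x\|\le b_{m}+b_{i}$ with $i\to\infty$ gives $\|x_{\alpha}-x\|\le b_{m}$; since $b_{m}\downarrow 0$, the whole net $\{x_{\alpha}\}$ $o$-converges to $x$.

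I expect the main obstacle to be exactly this last step: manufacturing a single order limit \emph{for the entire net} out of merely sequential completeness. The delicate point is that $\mathcal{T}_{\varepsilon,\lambda}$-completeness only controls the countable subfamily $\{x_{\alpha_{m}}\}$, so the argument must lean on Proposition \ref{pro.1.1} to replace the abstract witnessing net $\{e_{\gamma}\}$ by a nonincreasing \emph{sequence} $b_{m}\downarrow 0$, and then propagate the a.e.\ bound $\|x_{\alpha}-x\|\le b_{m}$ back to all tails of the net; once that monotone sequence is secured, the remainder is routine triangle-inequality bookkeeping.
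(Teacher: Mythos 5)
Your proof is correct and follows essentially the same route as the paper's: both directions hinge on Proposition \ref{pro.1.1} to replace the witnessing net by a decreasing sequence and on passing between a.e.\ convergence and convergence in $P_{\mu}$-probability. The only cosmetic differences are that the paper builds the dominating sequence $e_{m}=\bigvee\{\|x_{n_{k}}-x_{n_{l}}\|:k,l\ge m\}$ from an a.e.-Cauchy subsequence rather than via Borel--Cantelli and telescoping sums, and it obtains the final order bound $\|x_{\alpha}-x\|\le e_{\gamma}$ from the $\mathcal{T}_{\varepsilon,\lambda}$-closedness of the order ball $\{z:\|x_{\alpha}-z\|\le e_{\gamma}\}$ rather than by your triangle-inequality propagation.
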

\begin{proof}
	Sufficiency. Let $\{x_{\alpha},\alpha\in \Lambda\}$ be an $o$-Cauchy net in $S$, then there exists a decreasing net $\{e_{\gamma},\gamma\in \Gamma\}$ in $L_{+}^{0}(\mathcal{F})$ with $\wedge_{\gamma\in \Gamma}e_{\gamma}=0$ such that, for each $\gamma\in \Gamma$, there exists $\alpha(\gamma)\in \Lambda$ satisfying $\|x_{\alpha}-x_{\beta}\|\leq e_{\gamma}$ whenever $\alpha,\beta\geq\alpha(\gamma)$. Since $\{e_{\gamma},\gamma\in \Gamma\}$ is decreasing, there exists a decreasing sequence $\{e_{\gamma_{n}},n\in \mathbb{N}\}$ in $\{e_{\gamma},\gamma\in \Gamma\}$ such that $\{e_{\gamma_{n}},n\in \mathbb{N}\}$ converges a.e. to $0$ by Proposition \ref{pro.1.1}, and hence $\{e_{\gamma_{n}},n\in \mathbb{N}\}$  also converges in probability measure $P_{\mu}$ to $0$, then, for any $\varepsilon>0$ and $0<\lambda<1$, there exists $n_{0}(\varepsilon,\lambda)\in \mathbb{N}$ such that $P_{\mu}\{\omega\in \Omega:e_{\gamma_{n}}(\omega)<\varepsilon\}>1-\lambda$ whenever $n\geq n_{0}(\varepsilon,\lambda)$. Taking $\alpha_{0}=\alpha(\gamma_{n_{0}(\varepsilon,\lambda)})$ yields that $P_{\mu}\{\omega\in \Omega:\|x_{\alpha}-x_{\beta}\|(\omega)<\varepsilon\}\geq P_{\mu}\{\omega\in \Omega:e_{\gamma_{n_{0}(\varepsilon,\lambda)}}(\omega)<\varepsilon\}>1-\lambda$ whenever $\alpha,\beta\geq \alpha_{0}$, which means that $\{x_{\alpha},\alpha\in \Lambda\}$ is a $\mathcal{T}_{\varepsilon,\lambda}$-Cauchy net in $S$, and hence convergent in $\mathcal{T}_{\varepsilon,\lambda}$ to some element $x$ in $S$ by $\mathcal{T}_{\varepsilon,\lambda}$-completeness of $S$. We also have that $\|x_{\alpha}-x\|\leq e_{\gamma}$ for any $\alpha\geq \alpha(\gamma)$ since $\{z\in S:\|x_{\alpha}-z\|\leq e_{\gamma}\}$ is $\mathcal{T}_{\varepsilon,\lambda}$-closed and $\{x_{\beta},\beta\geq \alpha(\gamma)\}$  is a subnet of $\{x_{\beta},\beta\in \Lambda\}$ for any given $\gamma\in \Gamma$ and any given $\alpha\geq \alpha(\gamma)$, which implies that $\{x_{\alpha},\alpha\in \Lambda\}$ converges in order to $x$.
	
	Necessity. Since $\mathcal{T}_{\varepsilon,\lambda}$ is metrizable, we only need to consider $\mathcal{T}_{\varepsilon,\lambda}$-Cauchy sequences for the proof of Necessity. Let $\{x_{n},n\in \mathbb{N}\}$ be a $\mathcal{T}_{\varepsilon,\lambda}$-Cauchy sequence in $S$, then there exists a subsequence $\{x_{n_{k}},k\in \mathbb{N}\}$ of $\{x_{n},n\in \mathbb{N}\}$ such that $\{\|x_{n_{k}}-x_{n_{l}}\|,k,l\in \mathbb{N}\}$ converges a.e. (with respect to $P_{\mu}$, equivalently also with respect to $\mu$) to $0$ when $k,l\rightarrow \infty$. Let $e_{m}=\bigvee\{\|x_{n_{k}}-x_{n_{l}}\|:k,l\geq m\}$ for each $m\in \mathbb{N} $, then it is obvious that $\{e_{m},m\in \mathbb{N}\}$ is a decreasing sequence in $L_{+}^{0}(\mathcal{F})$ such that $\wedge_{m}e_{m}=0$ and, for every $m\in \mathbb{N}$, it is also clear that $\|x_{n_{k}}-x_{n_{l}}\|\leq e_{m}$ for any $k,l\geq m$, namely $\{x_{n_{k}},k\in \mathbb{N}\}$ is an $o$-Cauchy sequence in $S$, and hence convergent in order to some $x$ in $S$ by $o$-completeness of $S$. By the similar argument used in the proof of sufficiency, one can see that $\{x_{n_{k}},k\in \mathbb{N}\}$ also converges in $\mathcal{T}_{\varepsilon,\lambda}$ to $x$, which further means that $\{x_{n},n\in \mathbb{N}\}$ converges in $\mathcal{T}_{\varepsilon,\lambda}$ to $x$ since it is $\mathcal{T}_{\varepsilon,\lambda}$-Cauchy.
\end{proof}

To prove the second main result of this section--Theorem \ref{thm.4.4} below, we first give Lemma \ref{lem.4.3} below.

\begin{lem}\label{lem.4.3}
	An $RN$ space $(S,\|\cdot\|)$ over $\mathbb{K}$ with base $(\Omega,\mathcal{F},\mu)$ is $d$-decomposable iff it is $d$-stable.
\end{lem}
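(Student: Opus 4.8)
The plan is to unwind both notions in the concrete setting $E=L^{0}(\mathcal{F})$, where the lattice norm takes values in $L^{0}_{+}(\mathcal{F})$ and $d(x,y)=\|x-y\|$, and then to exhibit explicit elements realizing each property from the other. Recall that $d$-stability of $S$ means: for all $x,y\in S$ and $a\in B_{\mathcal{F}}$ there is $z\in S$ with $I_{a}\|z-x\|=0$ and $I_{a^{c}}\|z-y\|=0$; while $d$-decomposability means that whenever $\|x\|=e_{1}+e_{2}$ with $e_{1}\wedge e_{2}=0$ in $L^{0}_{+}(\mathcal{F})$, one can split $x=x_{1}+x_{2}$ with $\|x_{1}\|=e_{1}$ and $\|x_{2}\|=e_{2}$. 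A recurring elementary tool will be the following \emph{sandwich} consequence of (RN-3): if $I_{a}\|u-v\|=0$, then $I_{a}\|u\|=I_{a}\|v\|$, obtained by multiplying the two triangle inequalities $\|u\|\le\|u-v\|+\|v\|$ and $\|v\|\le\|v-u\|+\|u\|$ by $I_{a}$ (nonnegative multiplication preserves the order in $L^{0}$).

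For the direction ``$d$-decomposable $\Rightarrow$ $d$-stable'', given $x,y,a$ I would set $w=x-y$ and decompose $\|w\|=e_{1}+e_{2}$ with $e_{1}=I_{a}\|w\|$ and $e_{2}=I_{a^{c}}\|w\|$; these satisfy $e_{1}\wedge e_{2}=0$ (disjoint supports) and $e_{1}+e_{2}=(I_{a}+I_{a^{c}})\|w\|=\|w\|$, so $d$-decomposability yields $w=w_{1}+w_{2}$ with $\|w_{1}\|=I_{a}\|w\|$ and $\|w_{2}\|=I_{a^{c}}\|w\|$. Then $z:=y+w_{1}\in S$ is the required concatenation: since $z-y=w_{1}$ we get $I_{a^{c}}\|z-y\|=I_{a^{c}}I_{a}\|w\|=0$, and since $z-x=w_{1}-w=-w_{2}$ we get $I_{a}\|z-x\|=I_{a}I_{a^{c}}\|w\|=0$.

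For the converse ``$d$-stable $\Rightarrow$ $d$-decomposable'', given $x$ with $\|x\|=e_{1}+e_{2}$ and $e_{1}\wedge e_{2}=0$, I would put $a=[\{e_{1}>0\}]\in B_{\mathcal{F}}$. Orthogonality forces $e_{2}=0$ on $\{e_{1}>0\}$ and $e_{1}=0$ off it, so $I_{a}\|x\|=e_{1}$ and $I_{a^{c}}\|x\|=e_{2}$. Applying $d$-stability to the pair $(x,\theta)$ and $a$ produces $z=I_{a}x+I_{a^{c}}\theta\in S$ with $I_{a}\|z-x\|=0$ and $I_{a^{c}}\|z\|=0$. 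Setting $x_{1}:=z$ and $x_{2}:=x-z$ gives $x=x_{1}+x_{2}$, and it remains to identify the norms. The sandwich lemma applied on $a$ gives $I_{a}\|z\|=I_{a}\|x\|=e_{1}$, and combined with $I_{a^{c}}\|z\|=0$ this yields $\|x_{1}\|=\|z\|=e_{1}$. For $x_{2}$, the hypothesis gives $I_{a}\|x-z\|=0$, while on $a^{c}$ the triangle inequality together with $I_{a^{c}}\|z\|=0$ sandwiches $I_{a^{c}}\|x-z\|$ between $e_{2}=I_{a^{c}}\|x\|$ from below and above, so $\|x_{2}\|=\|x-z\|=e_{2}$.

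The computations in both directions are routine; the only point requiring care --- and the step I expect to be the main obstacle --- is that $(S,\|\cdot\|)$ is merely an $RN$ space carrying \emph{no} $L^{0}(\mathcal{F},\mathbb{K})$-module structure, so one cannot simply form ``$I_{a}x$'' as an element of $S$. The pieces $w_{1},w_{2}$ (forward) and the concatenation $z$ (backward) must instead be \emph{supplied} by whichever hypothesis is being assumed, and every support and orthogonality identification has to be pushed through the scalar multiplication on the value lattice $L^{0}_{+}(\mathcal{F})$ by means of the sandwich estimate, rather than performed on the elements of $S$ themselves.
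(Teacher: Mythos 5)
Your proof is correct and follows essentially the same route as the paper's: both directions hinge on the disjoint supports (the set $a=[e_{1}\neq 0]$ in the sufficiency part, the splitting $\|x\|=I_{a}\|x\|+I_{a^{c}}\|x\|$ in the necessity part) and on pushing every identification through the $L^{0}_{+}(\mathcal{F})$-valued norm via the triangle inequality, exactly as the paper does. The only differences are cosmetic and harmless: in the forward direction you decompose $x-y$ once rather than decomposing $x$ and $y$ separately, and in the backward direction you set $x_{2}=x-x_{1}$ outright instead of invoking $d$-stability a second time and then checking $x=x_{1}+x_{2}$, which slightly streamlines that step.
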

\begin{proof}
	Necessity. For any given $x$ in $S$ and any given $a\in B_{\mathcal{F}}$, let $e_{1}=I_{a}\|x\|$ and $e_{2}=I_{a^{c}}\|x\|$, then $\|x\|=e_{1}+e_{2}$ and $e_{1}\wedge e_{2}=0$, and thus there exist two elements $x_{1}$ and $x_{2}$ in $S$ such that $x=x_{1}+x_{2}$, $\|x_{1}\|=e_{1}$ and $\|x_{2}\|=e_{2}$ by $d$-decomposability of $\|\cdot\|$, where the uniqueness of $x_{1}$ and $x_{2}$ are obvious, we denote $x_{1}$ so obtained by $I_{a}x$. Further, for any $x$ and $y$ in $S$ and any $a\in B_{\mathcal{F}}$, then $z=I_{a}x+I_{a^{c}}y$ obviously satisfies $I_{a}\|z-x\|=0$ and $I_{a^{c}}\|z-y\|=0$ since $\|x-I_{a}x\|=I_{a^{c}}\|x\|$ and $\|y-I_{a^{c}}y\|=I_{a}\|y\|$.
	
	Sufficiency. Let $\|x\|=e_{1}+e_{2}$ be such that $e_{1}\wedge e_{2}=0$, arbitrarily choose $e_{1}^{0}$ and $e_{2}^{0}$ as a representative of $e_{1}$ and $e_{2}$, respectively, and further let $a=[e_{1}\neq 0]$ be the equivalence class of $\{\omega\in \Omega:e_{1}^{0}(\omega)\neq 0\}$, then it is clear that $I_{a}e_{1}=e_{1}$ and $I_{a}e_{2}=0$, and hence $e_{2}=(I_{a}+I_{a^{c}})e_{2}=I_{a^{c}}e_{2}$ and $I_{a^{c}}e_{1}=e_{1}-I_{a}e_{1}=0$. Let $x_{1}=I_{a}x+I_{a^{c}}\theta$ and $x_{2}=I_{a^{c}}x+I_{a}\theta$, which indeed exist by $d$-stability of $S$, then, by the definition of $d$-stability, $I_{a}\|x-x_{1}\|=0,~I_{a^{c}}\|x_{1}-\theta\|=0,~I_{a^{c}}\|x-x_{2}\|=0$ and $I_{a}\|x-\theta\|=0$, and thus $\|x_{1}\|=I_{a}\|x_{1}\|+I_{a^{c}}\|x_{1}\|=I_{a}\|x\|$ by the triangle inequality (RN-3), similarly, we can also have $\|x_{2}\|=I_{a^{c}}\|x\|$. Again by the definition of $d$-stability, we have that $x=I_{a}x+I_{a^{c}}x$ and $\theta=I_{a}\theta+I_{a^{c}}\theta$, which shows $x=x_{1}+x_{2}$.
\end{proof}

\begin{thm}\label{thm.4.4}
	Let $(S,\|\cdot\|)$ be an order complete $RN$ space over $\mathbb{K}$ with base $(\Omega,\mathcal{F},\mu)$. Then $\|\cdot\|$ is $d$-decomposable iff $S$ is $d$-$\sigma$-stable, and in which case $(S,\|\cdot\|)$ becomes a $\mathcal{T}_{\varepsilon,\lambda}$-complete $RN$ module in a nature way.
\end{thm}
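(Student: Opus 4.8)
The plan is to pivot through Lemma~\ref{lem.4.3}, which already equates $d$-decomposability with $d$-stability, so that the whole statement reduces to showing that, \emph{for an order complete $RN$ space}, $d$-stability upgrades to $d$-$\sigma$-stability, and that a $d$-$\sigma$-stable order complete $RN$ space carries a canonical module structure. The direction ``$d$-$\sigma$-stable $\Rightarrow$ $d$-decomposable'' is immediate: applying $d$-$\sigma$-stability to the partition $\{a,a^{c},0,0,\dots\}$ of unity shows that every $d$-$\sigma$-stable set is $d$-stable, and $d$-stability is $d$-decomposability by Lemma~\ref{lem.4.3}. Note that order completeness is not needed for this half.

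For the converse, assume $\|\cdot\|$ is $d$-decomposable, hence $S$ is $d$-stable. Fix a sequence $\{x_{n},n\in\mathbb{N}\}$ in $S$, a partition $\{a_{n},n\in\mathbb{N}\}$ of unity, and an auxiliary point $x_{0}\in S$. Iterating the binary concatenation afforded by $d$-stability, I would form the finite concatenations $s_{N}=\sum_{n=1}^{N}I_{a_{n}}x_{n}+I_{b_{N}}x_{0}$, where $b_{N}=(\vee_{n\le N}a_{n})^{c}$. Localizing the random metric along the partition $\{a_{1},\dots,a_{M},b_{M}\}$ and using the triangle inequality (RM-3) gives, for $N<M$, the estimate $d(s_{N},s_{M})\le e_{N}:=\sum_{n>N}I_{a_{n}}d(x_{0},x_{n})$. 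Because the supports of the $a_{n}$ are a.e.\ disjoint, $e_{N}$ is a genuine element of $L_{+}^{0}(\mathcal{F})$ (finite a.e.), and $\{e_{N},N\in\mathbb{N}\}$ decreases a.e.\ to $0$. Thus $\{s_{N},N\in\mathbb{N}\}$ is $o$-Cauchy, so by order completeness it $o$-converges to some $x\in S$ via a decreasing net $\{g_{\gamma},\gamma\in\Gamma\}$ with $\wedge_{\gamma}g_{\gamma}=0$. Finally, for each fixed $n$ and every $N\ge n$ one has $I_{a_{n}}\|x-x_{n}\|\le I_{a_{n}}\|x-s_{N}\|\le\|x-s_{N}\|$; feeding the dominating net into the right-hand side gives $I_{a_{n}}\|x-x_{n}\|\le g_{\gamma}$ for every $\gamma$, whence $I_{a_{n}}d(x,x_{n})=I_{a_{n}}\|x-x_{n}\|=0$. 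Hence $x=\sum_{n}I_{a_{n}}x_{n}$ exists in $S$, so $S$ is $d$-$\sigma$-stable.

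It remains to equip $S$ with module multiplication. Since $S$ is now $d$-$\sigma$-stable and, by Theorem~\ref{thm.4.2}, $\mathcal{T}_{\varepsilon,\lambda}$-complete, I would first define $\xi x=\sum_{k}I_{a_{k}}(\lambda_{k}x)$ for a simple $\xi=\sum_{k=1}^{m}\lambda_{k}I_{a_{k}}$ (the concatenation of the ordinary scalar multiples $\lambda_{k}x$), and check $\|\xi x\|=|\xi|\,\|x\|$ by localizing on each $a_{k}$. For a general $\xi\in L^{0}(\mathcal{F},\mathbb{K})$ I would approximate $\xi$ by simple $\xi_{m}\to\xi$ in probability $P_{\mu}$; then $\|\xi_{m}x-\xi_{m'}x\|=|\xi_{m}-\xi_{m'}|\,\|x\|\to0$ in $P_{\mu}$, so $\mathcal{T}_{\varepsilon,\lambda}$-completeness produces a limit, which I define to be $\xi x$. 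Well-definedness is standard, continuity of $\|\cdot\|$ promotes $\|\xi x\|=|\xi|\,\|x\|$ to all $\xi$, and the remaining module axioms are routine, so $(S,\|\cdot\|)$ becomes a $\mathcal{T}_{\varepsilon,\lambda}$-complete $RN$ module.

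The main obstacle is the converse direction: realizing the countable concatenation as the order-limit of the finite ones and verifying the defining gluing identities $I_{a_{n}}d(x,x_{n})=0$. The two points that require care are that the dominating quantity $e_{N}$ really lands in $L^{0}(\mathcal{F})$ rather than $\bar{L}^{0}(\mathcal{F})$ --- this is exactly where the a.e.\ disjointness of the supports $\{a_{n}\}$ is used --- and that multiplication by the idempotents $I_{a_{n}}$ is compatible with order convergence, which is what lets the bound $I_{a_{n}}\|x-x_{n}\|\le g_{\gamma}$ collapse to $0$ once infimized over $\gamma$.
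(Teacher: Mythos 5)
Your proof is correct, and its overall architecture (reduce via Lemma~\ref{lem.4.3} to the equivalence of $d$-stability and $d$-$\sigma$-stability, then build the module multiplication by concatenating scalar multiples over simple functions and passing to the $\mathcal{T}_{\varepsilon,\lambda}$-limit) matches the paper's. The one place where you genuinely diverge is the implication ``$d$-decomposable $\Rightarrow$ $d$-$\sigma$-stable'': the paper obtains it by combining Theorem~\ref{thm.4.2} (order completeness equals $\mathcal{T}_{\varepsilon,\lambda}$-completeness) with a citation of Theorem 2.15 of \cite{GWYZ}, which asserts that $d$-stability and $d$-$\sigma$-stability coincide for $(\varepsilon,\lambda)$-complete $RM$ spaces, whereas you prove this coincidence directly from order completeness: the finite concatenations $s_{N}$ form an $o$-Cauchy sequence dominated by $e_{N}=\sum_{n>N}I_{a_{n}}d(x_{0},x_{n})$ (which lands in $L^{0}_{+}(\mathcal{F})$ precisely because the $a_{n}$ are disjoint), and the order limit is then checked to satisfy the gluing identities $I_{a_{n}}d(x,x_{n})=0$. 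Your route is self-contained and in effect re-proves the cited result of \cite{GWYZ} in the special case at hand; the paper's route is shorter but external. Two small points you should make explicit in a final write-up: the existence of the finite concatenations $s_{N}$ requires iterating binary $d$-stability to $n$-fold concatenations (this is Theorem 2.7 of \cite{GWYZ}, quoted in Section 2 of the paper), and in the extension of the module multiplication one should verify $(\varphi-\psi)x=\varphi x-\psi x$ for simple $\varphi,\psi$ before writing $\|\xi_{m}x-\xi_{m'}x\|=|\xi_{m}-\xi_{m'}|\,\|x\|$; both are routine.
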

\begin{proof}
	Lemma \ref{lem.4.3} shows that $\|\cdot\|$ is $d$-decomposable iff $S$ is $d$-stable, Theorem \ref{thm.4.2} shows that $(S,\|\cdot\|)$ is order complete iff it is $\mathcal{T}_{\varepsilon,\lambda}$-complete, and while Theorem 2.15 of \cite{GWYZ} shows that $d$-stability and $d$-$\sigma$-stability coincide for an $(\varepsilon,\lambda)$-complete $RM$ space, then $\|\cdot\|$ is $d$-decomposable iff $S$ is $d$-$\sigma$-stable by applying Theorem 2.15 of \cite{GWYZ} to the $RM$ space $(S,d)$ with $d$ induced by $\|\cdot\|$.
	
	When $\|\cdot\|$ is $d$-decomposable, namely $(S,\|\cdot\|)$ is $d$-stable, for any simple element $\varphi$ in $L^{0}(\mathcal{F},\mathbb{K})$, written as $\varphi=\sum_{i=1}^{m}\alpha_{i}I_{a_{i}}$, where $\alpha_{i}\in \mathbb{K}$ for any $i=1\sim m$ and $\{a_{i},i=1\sim m\}$ forms a finite partition of unity in $B_{\mathcal{F}}$, and for any $x\in S$, define the module multiplication $\varphi x$ of $\varphi$ and $x$ by $\varphi x=\sum_{i=1}^{m}I_{a_{i}}(\alpha_{i} x)$, it is easy to check that $\|\varphi x\|=|\varphi|\|x\|$, and it is also very easy to verify that the definition of $\varphi x$ is independent of a particular representation of $\varphi$ whenever $\{a_{i},i=1\sim m\}$ forms a finite partition of unity. Now, for an arbitrary $\xi\in L^{0}(\mathcal{F},\mathbb{K})$, we can first choose a sequence $\{\varphi_{n},n\in \mathbb{N}\}$ of simple elements in $L^{0}(\mathcal{F},\mathbb{K})$ such that $|\varphi_{n}|\leq |\xi|$ for each $n\in \mathbb{N}$ and $\{\varphi_{n},n\in \mathbb{N}\}$ converges a.e. to $\xi$, and then define the module multiplication $\xi x$ of $\xi$ and $x$ as the $\mathcal{T}_{\varepsilon,\lambda}$-limit of $\{\varphi_{n}x,n\in \mathbb{N}\}$, which is well defined and $\|\xi x\|=|\xi|\|x\|$ since it is easy to check that $\{\varphi_{n}x,n\in \mathbb{N}\}$ is a $\mathcal{T}_{\varepsilon,\lambda}$-Cauchy sequence and further by the fact that $S$ is $\mathcal{T}_{\varepsilon,\lambda}$-complete (again, it is also obvious that the definition of $\xi x$ is independent of a particular choice of $\{\varphi_{n},n\in \mathbb{N}\}$). Finally, it is easy to observe that $(S,\|\cdot\|)$ becomes an $RN$ module under the module multiplication stated above.
\end{proof}

\begin{rem}\label{rem.4.5}
	Theorem \ref{thm.4.4} shows that an order complete $d$-decomposable $RN$ space is a $\mathcal{T}_{\varepsilon,\lambda}$-complete $RN$ module, and hence its $d$-$\sigma$-stability is exactly $\sigma$-stability, which also means that $d$-decomposability and $\sigma$-stability coincide in this case.
\end{rem}

%#############################################################################################
%#############################################################################################

\section{On universal completeness and $B$-stability}

The aim of this section is to prove that the notion of universal completeness defined by a Boolean-valued metric coincides with the notion of $B$-stability defined by a regular equivalence relation. The main results of this section are Theorem \ref{thm.5.6} and Corollary \ref{coro.5.7}. For this, we first recall the related terminologies as follows.

Definition \ref{def.5.1} below is taken from \cite[Chapter 3, p.122]{KK}.

\begin{defn}\label{def.5.1}
	Let $X$ be a nonempty set and $B$ be a complete Boolean algebra. An ordered pair $(X,d)$ is called a Boolean set (brietly, a $B$-set) if $d$ is a mapping from $X\times X$ to $B$ such that the following three axioms are satisfied:
	\begin{itemize}
		\item [(1)] $d(x,y)=0$ iff $x=y$;
		\item [(2)]	$d(x,y)=d(y,x)$ for any $x$ and $y$ in $X$;
		\item [(3)]	$d(x,y)\leq d(x,z)\vee d(z,y)$ for any $x,y$ and $z$ in $X$.
	\end{itemize}
As usual, $d$ is called a $B$-valued metric on $X$.
\end{defn}

Let $(X,d)$ be a $B$-set and $G$ be a nonempty subset of $X$. $G$ is said to be cyclic if, for any subset $\{x_i,i\in I\}$ in $G$ and any partition $\{a_i,i\in I\}$ of unity in $B$ such that there exists $x\in X$ satisfying $a_i\wedge d(x,x_i)=0$ for each $i\in I$ (it is easy to check that such an $x$ is unique if it exists, called the mixing of $\{x_i,i\in I\}$ by $\{a_i,i\in I\}$ and denoted by $mix(a_i x_i)$), then $x$ must belong to $G$. $G$ is said to be universally complete or extended if $mix(a_i x_i)$ always exists and belongs to $G$ for each subset $\{x_i,i\in I\}$ in $G$ and each partition $\{a_i,i\in I\}$ of unity in $B$.

\begin{ex}\label{ex.5.2}
	Let $(S,D)$ be an RM space with base $(\Omega,\mathcal{F},\mu)$. For any $x$ and any $y$ in $S$, let $d(x,y)$ be $[D(x,y)>0]:=$ the equivalence class of $\{\omega \in \Omega: D^{0}(x,y)(\omega)>0\}$, where $D^{0}(x,y)$ is an arbitrarily chosen representative of $D(x,y)$, then it is easy to check that $(S,d)$ is a $B$-set and it is also obvious that a nonempty subset $G$ of $S$ is universally complete with respect to $d$ iff $G$ is $D$-$\sigma$-stable by noticing that for every partition $\{a_i,i\in I\}$ of unity in $B_{\mathcal{F}}$, $a_i=0$ for all but at almost countably many $i\in I$.
\end{ex}

Definition \ref{def.5.3} below gives an equivalent formulation of the notion of a conditional set, see Remark \ref{rem.5.5} below for details.

\begin{defn}\label{def.5.3}
	Let $X$ be a nonempty set and $B$ be a complete Boolean algebra. An equivalence relation $\sim$ on $X\times B$ (denote the equivalence class of $(x,a)$ by $x|a$ for each $(x,a)\in X\times B$) is said to be regular if the following conditions are satisfied:
	\begin{itemize}
		\item [(1)] $x|a=y|b$ implies $a=b$;
		\item [(2)]	$x|a=y|a$ implies $x|b=y|b$ for any $b\leq a$;
		\item [(3)]	$\{a\in B: x|a=y|a\}$ has a greatest element for any $x$ and $y$ in $X$;
		\item [(4)]	$x|1=y|1$ implies $x=y$.
	\end{itemize}
	In addition, given a regular equivalence relation on $X\times B$, a nonempty subset $G$ of $X$ is said to be $B$-stable with respect to $\sim$ if, for each nonempty subset $\{x_i,i\in I\}$ in $G$  and each partition $\{a_i,i\in I\}$ of unity in $B$, there exists $x\in G$ such that $x|a_i=x_i|a_i$ for each $i\in I$ (by (3) and (4) it is easy to see that such an $x$ is unique).
\end{defn}

\begin{ex}\label{ex.5.4}
	Let $S$ be an $L^0(\mathcal{F},\mathbb{K})$-module. Define an equivalence relation $\sim$ on $S\times B_{\mathcal{F}}$ by $(x,a)\sim (y,b)$ iff $a=b$ and $I_ax=I_ay$, then $\sim$ is regular and a nonempty subset $G$ of $S$ is $\sigma$-stable iff $G$ is $B_{\mathcal{F}}$-stable with respect to $\sim$. Thus the notion of $B$-stability can be regarded as a natural generalization of $\sigma$-stability.
\end{ex}
\begin{proof}
	We only need to check that $\sim$ is regular since the other assertions are obvious. In fact, we only need to verify (3) of Definition \ref{def.5.3} since (1),(2) and (4) of Definition \ref{def.5.3} are also clear in the case of this example. Let $M=\{a\in B_{\mathcal{F}}: x|a=y|a\}$ and $b=\bigvee M$, then by the particular property of the measure algebra $B_{\mathcal{F}}$ there exists a sequence $\{a'_{n},n\in \mathbb{N}\}$ in $M$ such that $b=\bigvee_{n}a'_{n}$. Further, let $a_1=a'_{1}$ and $a_n=a'_{n} \wedge ( \vee\{a'_{k}:k=1\sim n-1\} )^c$ for any $n\geq 2$, then $\{a_n,n\in \mathbb{N}\}\cup \{b^c\}$ forms a partition of unity in $B_{\mathcal{F}}$, it is also obvious that $I_{a_n}(I_bx)=I_{a_n} x=I_{a_n} y=I_{a_n} (I_by)$ for each $n\in \mathbb{N}$ and $I_{b^c}(I_bx)=0=I_{b^c}(I_by)$, and hence $I_bx=I_by$ by the regularity of $S$, namely $b$ is the greatest element of $M$.
\end{proof}

\begin{rem}\label{rem.5.5}
	In \cite{DJKK}, a conditional set of a nonempty set $X$ and a complete Boolean algebra $B$ is defined as a collection of objects $x|a$ for $x\in X$ and $a\in B$ such that the following conditions are satisfied:
	\begin{itemize}
		\item [(C1)] if $x|a=y|b$, then $a=b$;
		\item [(C2)] if $x,y\in X$ and $a,b\in B$ with $a\leq b$, then $x|b=y|b$ implies $x|a=y|a$;
		\item [(C3)] if $\{a_i,i\in I\}$ is a partition of unity in $B$ and  $\{x_i,i\in I\}$ is any nonempty subset of $X$, then there exists exactly one element $x\in X$ such that $x|a_i=x_i|a_i$ for each $i\in I$.		
	\end{itemize}
	It has been pointed out in \cite{DJKK} that the uniqueness of $x$ in (C3) implies $y=z$ whenever $y|1=z|1$, which is just (4) of Definition \ref{def.5.3}, in particular $ii)$ of \cite[Proposition 2.4]{DJKK} shows that (3) of Definition \ref{def.5.3} holds by (C2) and (C3). Thus the notion of a conditional set of $X$ and $B$ amounts to $X$ itself being $B$-stable with respect to the equivalence relation $\sim$ defined by $(x,a)\sim (y,b)$ if $x|a=y|b$, the former emphasizes the set $\{x|a:x\in X ~$and$~ a\in B\}$ of equivalence classes, whereas the latter emphasizes the $B$-stable set $X$ itself. Here, we first give the notion of a regular equivalence relation and then define a $B$-stable set with respect to such an equivalence relation in order to provide a convenient connection with the notion of a universally complete set. In particular, we have the following result--Theorem \ref{thm.5.6}.
\end{rem}

\begin{thm}\label{thm.5.6}
	Let $(X,d)$ be a $B$-set. Define an equivalence relation $\sim$ on $X\times B$ by $(x,a)\sim (y,b)$ iff $a=b$ and $a\wedge d(x,y)=0$, then the equivalence relation is regular, called the equivalence relation induced by $d$. Conversely, any regular equivalence relation $\sim$ on $X\times B$ can be induced by a $B$-valued metric on $X$.
\end{thm}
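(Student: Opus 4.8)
The plan is to handle the two directions separately, using in both the \emph{agreement set}
$$M_{x,y}:=\{a\in B: x|a=y|a\}$$
(and, on the metric side, its counterpart $\{a\in B:a\wedge d(x,y)=0\}$) as the single bridge linking the $B$-valued metric to the equivalence relation. For the forward direction I would first check that the relation $\sim$ defined by $(x,a)\sim(y,b)$ iff $a=b$ and $a\wedge d(x,y)=0$ is genuinely an equivalence relation: reflexivity and symmetry are immediate from axioms (1) and (2) of Definition \ref{def.5.1}, while transitivity is precisely where the triangle inequality enters, since $a\wedge d(x,z)\leq(a\wedge d(x,y))\vee(a\wedge d(y,z))$ forces $a\wedge d(x,z)=0$ once the two summands vanish. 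For regularity the decisive observation is that $a\wedge d(x,y)=0$ is equivalent to $a\leq d(x,y)^c$, so that $M_{x,y}=\{a\in B:a\leq d(x,y)^c\}$ is exactly the principal ideal generated by $d(x,y)^c$. Then condition (1) of Definition \ref{def.5.3} holds by construction of $\sim$, condition (2) because $M_{x,y}$ is a down-set, condition (3) because $d(x,y)^c$ is the greatest element of $M_{x,y}$, and condition (4) because $x|1=y|1$ gives $d(x,y)=0$, hence $x=y$ by axiom (1).

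For the converse, given a regular equivalence relation $\sim$, I would define the candidate metric by taking $e(x,y)$ to be the greatest element of $M_{x,y}$, which exists by condition (3), and setting $d(x,y):=e(x,y)^c$. The first step is to record that condition (2) makes $M_{x,y}$ a down-set, whence $M_{x,y}=\{a\in B:a\leq e(x,y)\}$; this identity is what makes the construction reversible. Verifying the metric axioms then splits into an easy part and a hard part. Axiom (1) reduces to the assertion that $e(x,y)=1$ iff $x=y$, which is condition (4) together with reflexivity of $\sim$ (note $e(x,y)=1$ iff $1\in M_{x,y}$ iff $x|1=y|1$). Axiom (2) follows because symmetry of $\sim$ yields $M_{x,y}=M_{y,x}$ and therefore $e(x,y)=e(y,x)$, so $d(x,y)=d(y,x)$.

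The main obstacle I expect is the triangle inequality $d(x,z)\leq d(x,y)\vee d(y,z)$, which after taking complements is equivalent to $e(x,y)\wedge e(y,z)\leq e(x,z)$. To establish this I would set $a:=e(x,y)\wedge e(y,z)$; since $M_{x,y}$ and $M_{y,z}$ are down-sets, $a$ belongs to both, so $x|a=y|a$ and $y|a=z|a$, and transitivity of $\sim$ gives $x|a=z|a$, i.e.\ $a\in M_{x,z}$, i.e.\ $a\leq e(x,z)$, as required. Finally I would close the loop by checking that $d$ reinduces $\sim$: the relation induced by $d$ identifies $(x,a)$ with $(y,b)$ exactly when $a=b$ and $a\wedge d(x,y)=0$, and $a\wedge d(x,y)=0$ is equivalent to $a\leq e(x,y)$, hence to $x|a=y|a$; combined with condition (1) (equal classes force equal second coordinates) this matches $(x,a)\sim(y,b)$ iff $a=b$ and $x|a=y|a$, so the induced relation coincides with $\sim$.
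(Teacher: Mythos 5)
Your proof is correct and follows essentially the same route as the paper: both directions hinge on the correspondence $d(x,y)=(\max\{a:x|a=y|a\})^{c}$, and you simply write out the verifications the paper dismisses as routine. The only (minor) difference is in establishing condition (3) of Definition \ref{def.5.3} for the forward direction: the paper takes $b=\bigvee M$ and shows $b\in M$ via the infinite distributive law, whereas you observe directly that $M_{x,y}$ is the principal ideal generated by $d(x,y)^{c}$, which is a slightly more economical way to reach the same maximum.
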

\begin{proof}
	If we denote by $x|a$ the equivalence class of $(x,a)$ under the equivalence relation $\sim$ induced by $d$, then it is very easy to verify that $\sim$ is indeed an equivalence relation and (1), (2) and (4) in Definition \ref{def.5.3} are also obvious. As for (3) of Definition \ref{def.5.3}, let $M=\{a\in B:x|a=y|a\}$ and $b=\bigvee M$, then $a\wedge d(x,y)=0$ for each $a\in M$, and further by the distributive law in $B$ $b\wedge d(x,y)=\bigvee \{a\wedge d(x,y):a\in M\}=0$, namely $x|b=y|b$, that is to say, $b$ is the greatest element of $\{a\in B:x|a=y|a\}$, so $\sim$ is regular.
	
	Conversely, let $\sim$ be a regular equivalence relation on $X\times B$ and denote by $x|a$ the equivalence class of $(x,a)$ under $\sim$, defined $d:X\times X\rightarrow B$ as follows:
	$$d(x,y)=( \max\{a\in B:x|a=y|a\} )^c~\text{for any}~x ~\text{and}~y~\text{in}~X,$$
	where $b^c$ stands for the complement of $b$ for any given $b\in B$.
	One can easily check that $d$ is a $B$-valued metric on $X$ and the equivalence relation induced by $d$ is exactly $\sim$.
\end{proof}

\begin{cor}\label{coro.5.7}
	The notion of universal completeness defined by a Boolean-valued metric coincides with the notion of $B$-stability defined by a regular equivalence relation.
\end{cor}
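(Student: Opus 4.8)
The plan is to read this off directly from Theorem \ref{thm.5.6}, which already establishes the bijective correspondence between $B$-valued metrics on $X$ and regular equivalence relations on $X\times B$. The only additional ingredient needed is the observation that, under this correspondence, the condition defining the mixing $mix(a_i x_i)$ and the condition defining $B$-stability (in Definition \ref{def.5.3}) are literally the same requirement on the candidate element $x$. So the corollary carries no new analytic content; the task is purely to match up two pieces of notation.

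First I would start from a $B$-set $(X,d)$ and pass, via Theorem \ref{thm.5.6}, to the induced regular equivalence relation $\sim$, for which $x|a=y|a$ holds precisely when $a\wedge d(x,y)=0$. Then, for any nonempty subset $\{x_i,i\in I\}$ of $G$ and any partition $\{a_i,i\in I\}$ of unity in $B$, an element $x\in X$ satisfies $a_i\wedge d(x,x_i)=0$ for every $i$ if and only if $x|a_i=x_i|a_i$ for every $i$. Hence the mixing $mix(a_i x_i)$ exists and lies in $G$ exactly when the (necessarily unique) $B$-stable element exists and lies in $G$, so $G$ is universally complete with respect to $d$ if and only if $G$ is $B$-stable with respect to $\sim$.

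For the converse direction I would begin with a regular equivalence relation $\sim$ and take the induced $B$-valued metric $d(x,y)=(\max\{a:x|a=y|a\})^{c}$ provided by Theorem \ref{thm.5.6}. Writing $m=\max\{a:x|a=x_i|a\}$, so that $d(x,x_i)=m^{c}$, the key verification is the chain $x|a_i=x_i|a_i \iff a_i\leq m \iff a_i\wedge m^{c}=a_i\wedge d(x,x_i)=0$: the first equivalence uses that $a_i$ belongs to the set whose supremum is attained at $m$ (forward implication) together with property (2) of regularity (reverse implication), and the second is immediate in a Boolean algebra. This identifies the $B$-stability condition with the mixing condition once more, completing the equivalence and hence the corollary.

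Since both directions reduce to this single translation identity, I do not expect any substantive obstacle; the entire content is already encoded in Theorem \ref{thm.5.6}. The one point deserving care is the \emph{attainment} of the maximum in the converse direction, which is exactly what condition (3) in the definition of a regular equivalence relation guarantees, and which is what licenses the step $a_i\leq m \Rightarrow x|a_i=x_i|a_i$.
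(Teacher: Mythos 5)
Your proposal is correct and matches the paper's intent exactly: the paper states Corollary \ref{coro.5.7} without proof as an immediate consequence of Theorem \ref{thm.5.6}, and your argument is precisely the translation identity $a_i\wedge d(x,x_i)=0 \iff x|a_i=x_i|a_i$ (in both directions of the correspondence) that makes it immediate. The care you take with the attainment of the maximum via condition (3) of regularity and with property (2) for the reverse implication is exactly the right point to check.
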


\begin{rem}\label{rem.5.8}
It is easy to see that the notion of a cyclic set includes as a special case the notion of a set having the relative countable concatenation property (also called a relatively $\sigma$-stable set, introduced independently in \cite{WG,Zap}). It is also obvious that the notion of a cyclic set is weaker than the notion of a universally complete set, and it is the latter that is frequently used in the study of related issues, so here we do not intend to make too much discussion about the notion of a cyclic set.
\end{rem}
%####################################################################################################
%####################################################################################################

% ------------------------------------------------------------------------
\end{document}